\newcommand{\R}{{\mathbb R}}
\newtheorem{theorem}{Theorem}[section]
\newtheorem{corollary}[theorem]{Corollary}
\newtheorem{definition}[theorem]{Definition}
\newtheorem{remark}[theorem]{Remark}
\newtheorem{lemma}[theorem]{Lemma}
\newtheorem{proposition}[theorem]{Proposition}
\numberwithin{equation}{section}
\begin{document}

\title{\bf\Large The Morse index theorem in the case of two variable\\
endpoints in conic Finsler manifolds
\footnotetext{\hspace{-0.35cm} 2020
{\it Mathematics Subject Classification}.
Primary 53C22, 53C50, 53C60.
\endgraf
{\it Key words and phrases.}
pseudo-Finsler manifold, Morse index form, Jacobi fields, focal points.
}}
\date{}
\author{Guangcun Lu\footnote{
E-mail: \texttt{gclu@bnu.edu.cn}/{February 2, 2023(first), %May 10,
July 28, 2023(third revised)}.}}

\maketitle

\vspace{-0.7cm}

\begin{center}
\begin{minipage}{13cm}
{\small {\bf Abstract}\quad
In this note,  we prove the Morse index theorem for a geodesic connecting
two submanifolds in a $C^7$ manifold with a $C^6$ (conic) pseudo-Finsler metric
provided that the fundamental tensor is positive definite along velocity curve of the geodesic.
}
\end{minipage}
\end{center}
	
\vspace{0.2cm}

\section{Introduction, preliminaries and statements of results}\label{sec:Fin}

\noindent{\bf 1.1. Introduction}.
Ioan Radu Peter proved the Morse index theorem where the ends are submanifolds in Finsler geometry in \cite{Pe06}.
In this paper we generalize it to (conic) pseudo-Finsler
manifolds with lower smoothness.
Instead of the Cartan connection used in \cite{Pe06}
we employ   the Chern connection to introduce the Morse index form, $P$-Jacobi field and the normal second fundamental form
(or shape operator).
In addition, our proof method is different from that of \cite{Pe06}. For example,
we do not use the (conic) pseudo-Finsler version (Proposition~\ref{prop:Sa2.10}) of the so-called index lemma \cite[Lemma~4.2]{Pe06}
in Finsler geometry. The Morse index theorem proved in
\cite[\S31]{Her68} by R.A. Hermann plays a key role in our proof.\\

\noindent{\bf 1.2. Preliminaries for Finsler geometry}.
 The theory of geodesics and Jacobi fields on smooth pseudo-Finsler manifolds was developed by
 Javaloyes and collaborators in \cite{HubJa22, Jav13, Jav13+, Jav14, Jav15, Jav20, Jav21}.
  The related notions and results may be stated for pseudo-Finsler manifolds with lower smoothness.

Let $M$ be a $n$-dimensional, connected $C^7$ submanifold of $\R^N$.
Its tangent bundle $TM$ is a $C^6$ manifold of dimension $2n$.
Let $\pi:TM\rightarrow M$ be the natural projection and
let $A\subset TM\setminus 0_{TM}$ be an open subset of $TM$ which is conic, that is,
such that $\pi(A)=M$ and $\lambda v\in A$, for every $v\in A$ and $\lambda>0$.
A $C^6$  (conic) \textsf{pseudo-Finsler metric} (with domain $A$) is a $C^6$ function
$L:A\rightarrow \R$ satisfying the following conditions:
\begin{description}
\item[(i)]  $L(\lambda v)=\lambda^2 L(v)$ for any $v\in A$ and $\lambda>0$,
\item[(ii)] for every $v\in A$,  the \textsf{fundamental tensor} $g_v$ of $L$ at
$v$ defined by
$$
T_{\pi(v)}M\times T_{\pi(v)}M\ni (u,v)\mapsto
g_v(u,w):=\frac 12 \frac{\partial^2}{\partial t\partial s} L(v+tu+sw)|_{t=s=0}
$$
 is nondegenerate.
\end{description}
The pair $(M,L)$ is called a $C^6$ (conic) \textsf{pseudo-Finsler manifold} (with domain $A$).
In particular, if each $g_v$ in (ii) is positive definite, we call the square root $F=\sqrt{L}$
 a $C^6$ \textsf{conic Finsler metric} (cf. \cite{Jav14}).
By \cite[Proposition~2.3]{Jav15} $g_v(v,v)=L(v)$ and $g_{\lambda v}=g_v$ for any $v\in A$ and
$\lambda>0$.
The  \textsf{Cartan tensor} of $L$ is  the trilinear form
\begin{equation}\label{cartantensor}
C_v(w_1,w_2,w_3)=\frac 14\left. \frac{\partial^3}{\partial s_3\partial s_2\partial s_1}L\left(v+\sum_{i=1}^3 s_iw_i\right)\right|_{s_1=s_2=s_3=0}
\end{equation}
for $v\in A$ and $w_1,w_2,w_3\in T_{\pi(v)} M$. By homogeneity, $C_v(w_1,w_2,w_3)=0$ if $w_i=v$ for some $i$.

For an open subset $\Omega\subset M$ and an integer $0\le\ell\le 6$
let $\Gamma^\ell(T\Omega)$ be the space of $C^\ell$ vector fields on $\Omega$.
$V\in \Gamma^\ell(T\Omega)$ is called \textsf{$A$-admissible} if $V(x)\in A$ for every $x\in\Omega$.
Denote by $\Gamma^\ell_A(T\Omega)$ the set of all $A$-admissible $V\in \Gamma^\ell(T\Omega)$.

Let $(\Omega, x^i)$ be a  coordinate chart on $M$ and
 let $(x^i,y^i)$ be  the canonical coordinates  on $\pi^{-1}(\Omega)\subset TM$ associated with it, i.e.,
 $v=v^i\frac{\partial}{\partial x^i}\big|_{\pi(v)}=y^i(v)\frac{\partial}{\partial x^i}\big|_{\pi(v)}$
 for $v\in\pi^{-1}(\Omega)$.
 Define
\begin{eqnarray*}
&&T\Omega\cap A\ni v\mapsto g_{ij}(v)=g_v\left(\frac{\partial}{\partial x^i}\Big|_{\pi(v)}, \frac{\partial}{\partial x^j}\Big|_{\pi(v)}\right),\\
&&T\Omega\cap A\ni v\mapsto C_{ijk}(v)=C_v\left(\frac{\partial}{\partial x^i}\Big|_{\pi(v)}, \frac{\partial}{\partial x^j}\Big|_{\pi(v)}, \frac{\partial}{\partial x^k}\Big|_{\pi(v)}\right).
\end{eqnarray*}
They are $C^4$ and $C^3$ functions, respectively.
For $v\in T\Omega\cap A$ let $(g^{ij}(v))=(g_{ij}(v))^{-1}$ and
\begin{eqnarray*}
&&\gamma^i_{jm}(v):=\frac{1}{2}\sum_s{g}^{is}(v)\left(\frac{\partial{g}_{sj}}{\partial{x^m}}(v)+
\frac{\partial{g}_{ms}}{\partial{x^j}}(v)-\frac{\partial{g}_{jm}}{\partial{x^s}}(v)\right),\\
&&{C}^i_{jm}(v):= \sum_l{g}^{il}(v){C}_{ljm}(v),\\
&&N^i_j(v):=\sum_m\gamma^i_{jm}(x,y)v^m-\sum_{m,r,s}\mathcal{C}^i_{jm}(v)\gamma^m_{rs}(v)v^rv^s,\\
&&\Gamma^i_{jk}(v):=\gamma^i_{jk}(v)-\sum_{l,r}g^{li}(v)\left(C_{jlr}(v)N^r_k(v)-C_{jkr}(v)N^r_l(v)+ C_{lkr}(v)N^r_j(v)\right).
\end{eqnarray*}
These are $C^3$. Moreover
 $N^j_i(v)=\sum_k\Gamma^j_{ik}(v)v^k$ and the Chern connection  $\nabla$
 (on the pulled-back tangent bundle $\pi^\ast TM$) is given by
\begin{equation}\label{e:inducedConn-}
\nabla_{\partial_{x^i}}\partial_{x^j}(v)=\sum_m\Gamma^i_{jm}(v)\partial_{x^m},\quad i,j\in\{1,\cdots,n\}.
\end{equation}
 The trilinear map $R_v$ from $T_{\pi(v)}M\times T_{\pi(v)}M\times T_{\pi(v)}M$ to $T_{\pi(v)}M$ given by
\begin{equation}\label{e:ChernC}
R_v(\xi,\eta)\zeta=\sum_{i,j,k,l}\xi^k \eta^l \zeta^j R_{j\,\,kl}^{\,\,\,i}(v) \partial_{x^i}|_{\pi(v)}
\end{equation}
for $v\in T\Omega\cap (TM\setminus 0_{TM})$ defines  the \textsf{Chern curvature tensor} (or
\textsf{$hh$-curvature tensor} \cite[(3.3.2) \& Exercise 3.9.6]{BaChSh})  $R_V$
on $\Omega\subset M$, where
$$
R^i_{jkl}=\frac{\delta\Gamma^i_{jl}}{\delta x^k}-\frac{\delta\Gamma^i_{jk}}{\delta x^l}+
\sum_h\Gamma^i_{hk}\Gamma^h_{jl}-\sum_h\Gamma^i_{hl}\Gamma^h_{jk}\quad\hbox{and}\quad
\frac{\delta}{\delta x^k}=\frac{\partial}{\partial x^k}-\sum_iN^i_k\frac{\partial}{\partial y^i}.
$$

Let $0\le \ell\le 3$, $s\ge 1$, $V\in \Gamma^\ell_A(T\Omega)$. Define $\nabla^V:\Gamma^s(T\Omega)\times\Gamma^s(T\Omega)\to \Gamma^{\min\{\ell,s-1\}}(T\Omega)$  by
\begin{equation}\label{e:inducedConn}
\nabla^V_{f\partial_{x^i}}(g\partial_{x^j})(x)=f(x)g(x)\sum_m\Gamma^i_{jm}(V(x))\partial_{x^m}+ f(x)\partial_{x^i}g(x)\partial_{x^j}
\end{equation}
for $f,g\in C^s(\Omega)$ and $i,j\in\{1,\cdots,n\}$.
(This depends only on $V(x)$ as in the proof of \cite[Lemma~3.12]{Rad04}.)
It satisfies the following properties for $f\in C^s(\Omega)$ and $X,Y,Z\in\Gamma^s(T\Omega)$:
\begin{description}
\item[(i)]  $\nabla^V_X(fY)=f\nabla^V_XY+ X(f)Y$ and
$\nabla^V_{fX}Y=f\nabla^V_XY$,
\item[(ii)] $\nabla^V_XY-\nabla^V_YX=[X,Y]$,
\item[(iii)] $X(g_V(Y,Z))=g_V(\nabla^V_XY, Z)+ g_V(Y,\nabla^V_XZ)+ 2C_V(\nabla^V_XY,Y,Z)$,
where $g_V$ and $C_V$ are, respectively, the fundamental tensor and the Cartan tensor of $L$  evaluated on the vector field $V$.
\end{description}
 $\nabla^V$ is called the  \textsf{Chern connection} of  $(M,L)$  associated with $V$.
For a $C^1$ function $f:A\to\mathbb{R}$, the \textsf{vertical derivative} of $f$ at $v\in A$ in direction $u\in TM$ with $\pi(u)=\pi(v)$
was defined in \cite{Jav21} by
$$
\partial^\nu f_v(u)=\left.\frac{d}{dt}\right|_{t=0}f(v+tu).
$$
 Define the \textsf{vertical gradient} $(\nabla^\nu f)_v$ at $v\in A$, and the \textsf{horizontal gradient} $(\nabla^{\rm h} f)_v$ at $v\in A$ by \begin{eqnarray}\label{Vnabla}
\partial^\nu f_v(u)=g_v((\nabla^\nu f)_v,u)\;\;\forall u\in\pi^{-1}(\pi(v)),\\
u(f(V))-\partial^\nu f_v(\nabla^V_uV)=g_v((\nabla^{\rm h} f)_v,u)\;\;\forall u\in\pi^{-1}(\pi(v)),\label{Hnabla}
\end{eqnarray}
respectively, where $V$ is a local $A$-admissible  $C^1$ extension of $v$. See \cite[(3),(4),(5)]{Jav21}.

If $1\le\ell\le 3$ and $s\ge 2$,  the connection $\nabla^V$ in (\ref{e:inducedConn}) has the curvature tensor $R^V$ defined by
$$
R^V(X,Y)Z=\nabla^V_X\nabla^V_YZ-\nabla^V_Y\nabla^V_XZ-
\nabla^V_{[X,Y]}Z
$$
for  vector fields $V\in \Gamma^\ell_A(T\Omega)$ and  $X,Y,Z\in\Gamma^s(T\Omega)$.
Clearly, $R^V(X,Y)Z$ is $C^{\min\{\ell-1,s-2\}}$,  and it was proved in \cite[Theorem~2.1]{Jav13+}
that
\begin{equation}\label{e:ChernCurv}
R_V(X,Y)Z=R^V(X,Y)Z-P_V(Y,Z,\nabla^V_XV)+ P_V(X,Z, \nabla^V_YV),
\end{equation}
where $P_V(X,Y,Z)=\frac{\partial}{\partial t}(\nabla_X^{V+tZ})|_{t=0}$
and so
$P_V(X,Y,V)=0$ (\cite[(1)]{Jav13+}). The Chern tensor $P_V(X,Y,Z)$ is symmetric in $X$ and $Y$, and
$P_V(X,Y,Z)|_x$ depends only on $V(x)$. By \cite[Lemma~1.2]{Jav13+}
\begin{equation}\label{e:P-tensor}
P_v(v,v,u)=0,\quad\forall v\in T\Omega\cap A,\;\forall u\in T_{\pi(v)}M.
\end{equation}
 For $v\in (T_xM)\cap A$ and $u,w\in T_xM$, by \cite[Proposition~3.1]{Jav20} we have
\begin{eqnarray}\label{e:Rsym7}
g_{v}(R_{v}(v, u)v, w)&=&g_{v}(R_{v}(v, w)v, u),\\
g_{v}(R_{v}(u, v)v, w)&=&
-g_{v}(R_{v}(u,v)w, v).\label{e:Rsym8}
\end{eqnarray}

 For a $C^7$ connected submanifold $P\subset M$ of dimension $k<n$, call
\begin{eqnarray}\label{e:normalBundle}
TP^\bot=\{v\in A~|~\pi(v)\in P,\;g_v(v,w)=0\;\forall w\in T_{\pi(v)}P\}
\end{eqnarray}
 the  \textsf{normal bundle} of $P$ in $(M,L)$
though $TP^\bot$ is not necessarily a fiber bundle over $P$.
 By \cite[Lemma~3.3]{Jav15} a nonempty $TP^\perp$ is an $n$-dimensional submanifold
of $TM$, $P_0=\pi(TP^\bot)$ is open in $P$, and the map
$\pi:TP^\perp\rightarrow P_0$ is a submersion.
For $v\in TP^\bot$ with $\pi(v)=p$, suppose that $g_v|_{T_pP\times T_pP}$ is nondegenerate.
(This clearly holds if $P$ is a point.) Then there exists a splitting
\begin{equation}\label{e:splitting}
T_pM=T_pP\oplus (T_pP)^\perp_v,
\end{equation}
where $(T_pP)^\perp_v=\{u\in T_pM\,|\,g_v(u,w)=0\;\forall w\in T_{p}P\}$.
Clearly, $v\in (T_pP)^\perp_v$, and each $u\in T_pM$ has a decomposition ${\rm tan}^P_v(u)+{\rm nor}^P_v(u)$,
where ${\rm tan}^P_v(u)\in T_pP$ and ${\rm nor}^P_v(u)\in (T_pP)^\perp_v$.
Let $V$ be local $C^6$ admissible extensions of $v$, and for $u,w\in T_pP$ let $U$ and $W$ be local $C^6$ extensions of
 $u$ and $w$ to $M$ in such a way that $U$ and $W$ are tangent to $P$ along $P$.
Then
 \begin{eqnarray*}
 &&S^P_{v}:T_pP\times T_pP\to (T_pP)^\perp_{v}, \;(u,w)\mapsto  {\rm nor}^P_v\left((\nabla^V_UW)(p)\right),\\
 &&\tilde{S}^P_{v}:T_pP\to T_pP,\;u\mapsto {\rm tan}^P_v\left((\nabla^V_UV)(p)\right)
 \end{eqnarray*}
are well-defined (cf. \cite[subsection~3.1]{Jav15} and \cite[\S3.1]{HubJa22}).
They are called  the \textsf{second fundamental form} of $P$ in the direction $v$ and
 the \textsf{normal second fundamental form} (or \textsf{shape operator}) of $P$ in the direction $v$, respectively.
$S^P_{v}$ is bilinear and symmetric, $\tilde{S}^P_{v}$ is linear, and
$g_{v}({S}^P_{v}(u,w), v)=-g_{v}(\tilde{S}^P_{v}(u),w)$. Hence $\tilde{S}^P_{v}$ is symmetric.

For a curve $c\in W^{1,2}([a, b], M)$  let
$W^{1,2}(c^\ast TM)$ and $L^2(c^\ast TM)$ denote the spaces of all   $W^{1,2}$ and  $L^{2}$ vector fields along $c$, respectively.
Then  $\dot{c}\in L^2(c^\ast TM)$. Let
$(x^i, y^i)$ be the canonical coordinates around $\dot{c}(t)\in TM$.
Write
$\dot c(t)=\dot{c}^i(t) \partial_{x^i}|_{c(t)}$ and
$\zeta(t)=\zeta^i(t)\partial_{x^i}|_{c(t)}$
for a vector field $\zeta$ along $c$.
Call $\xi\in C^0(c^\ast TM)$ \textsf{$A$-admissible} if $\xi(t)\in A$ for all $t\in [a,b]$.
 The \textsf{covariant derivative} of $\zeta\in W^{1,2}(c^\ast TM)$ along  $c$ (with this $A$-admissible $\xi$ as reference vector)  is defined by
\begin{equation}\label{e:covariant-derivative}
D^\xi_{\dot{c}}\zeta(t):= \sum_m\bigl(\dot{\zeta}^m(t)
+ \sum_{i,j} \zeta^i(t)\dot{c}^j(t)\Gamma_{ij}^m(c(t), \xi(t))\bigr)\partial_{x^m}|_{c(t)}.
\end{equation}
Clearly, $D^\xi_{\dot{c}}$ belongs to $L^2(c^\ast TM)$, and sits in $C^{\min\{1,r\}}(c^\ast TM)$ provided that
 $c$ is of class $C^{r+1}$, $\zeta\in C^{r+1}(c^\ast TM)$ and $\xi\in C^r(c^\ast TM)$ for some $0\le r\le 6$;  $D^\xi_{\dot{c}}\zeta(t)$ depends only on $\xi(t)$, $\dot{c}(t)$ and behavior of $\zeta$ near $t$. Moreover $D^\xi_{\dot{c}}\zeta(t)=\nabla^{\tilde\xi}_{\dot{c}}\tilde\zeta(c(t))$ if $\dot{c}(t)\ne 0$
 and $\tilde\xi$ (resp. $\tilde\zeta$) is any $A$-admissible extension (resp. any extension) of $\xi$ (resp. $\zeta$) near $c(t)$.
When the above $\xi$ belongs to $W^{1,2}(c^\ast TM)$, $D^\xi_{\dot{c}}$  is \textsf{ almost $g_\xi$-compatible}, that is, for any $\eta,\zeta\in W^{1, 2}(c^\ast TM)$ we have
\begin{equation}\label{e:compat}
\frac{d}{dt}g_{\xi}(\zeta,\eta)=g_{\xi}\bigl(D^\xi_{\dot{c}}\zeta,\eta\bigr)
+g_{\xi}\bigl(\zeta, D^\xi_{\dot{c}}\eta\bigr)
+2{C}_{\xi}\bigl(D^\xi_{\dot{c}}\xi,\zeta,\eta\bigr)\quad{\rm a.e.}
\end{equation}
(cf. \cite[(4)]{Jav13}). Moreover, if $c$, $\xi$ and $\zeta$ are $C^3$, $C^1$ and $C^2$, respectively,
then $D^\xi_{\dot{c}}\zeta$ is $C^1$ and $D^\xi_{\dot{c}}D^\xi_{\dot{c}}\zeta$ is well-defined and is $C^0$.
A $C^1$ curve $\gamma:[a,b]\to M$ is said to be \textsf{$A$-admissible}
if $\dot\gamma\in C^0(\gamma^\ast TM)$ is $A$-admissible.
A $W^{1,2}$-vector field $X$ along such a curve $\gamma$ is called \textsf{parallel} if $D^{\dot\gamma}_{\dot\gamma}X=0$.
 A $C^2$ $A$-admissible curve $\gamma$ in $(M, L)$
 is called  an  \textsf{$L$-geodesic} if $\dot\gamma$ is parallel along $\gamma$, i.e.,
 $D^{\dot \gamma}_{\dot{\gamma}}\dot{\gamma}(t)\equiv 0$.
$L$ is always constant along a geodesic $\gamma$. In particular, if $L(\dot\gamma(t))\equiv 0$
then $\gamma$ is called a \textsf{lightlike geodesic}. \\

\noindent{\bf Remark~A}.
$L$-geodesics must first be $A$-admissible, and so constant curves in $(M, L)$ cannot be geodesics.
For an $L$-geodesic $\gamma$ in $M$,  each $\dot\gamma(t)$ belongs to $A$ and therefore $g_{\dot\gamma(t)}$ must be nondegenerate.
By \cite[Proposition~2.3]{Jav15} $g_{\dot\gamma(t)}(\dot\gamma(t),\dot\gamma(t))=L(\dot\gamma(t))\equiv L(\dot\gamma(0))$.
If $g_{\dot\gamma(t_0)}$ is positive definite for some $t_0\in [0, \tau]$, so is each $g_{\dot\gamma(t)}$
because $t\mapsto g_{\dot\gamma(t)}$ is continuous;
see the arguments below (\ref{e:secondDifff2}).\\

Observe that in local coordinates the geodesic equation has the form
\begin{equation*}
 \ddot\gamma^k+\sum_{i,j=1}^n\dot\gamma^i \dot\gamma^j(\Gamma^k_{ij}\circ\dot\gamma)=0,\quad k\in\{1,\dots,n\}.
\end{equation*}
Since $\Gamma^k_{ij}$ is $C^3$, the geodesics are actually $C^4$ by the ordinary differential equation theory.

As usual, using  a $C^6$ Riemannian metric $h$ on $M$ and its exponential map
 $\exp$ may determine a Riemannian-Hilbert metric
on $W^{1,2}([0,\tau]; M)$ given by
\begin{equation}\label{e:1.1}
\langle\xi,\eta\rangle_1=\int^\tau_0 h(\xi(t),\eta(t)) dt+
\int^\tau_0 h(\nabla^h_{\dot\gamma}\xi(t),\nabla^h_{\dot\gamma}\xi(t)) dt
\end{equation}
 for $\xi,\eta\in T_\gamma W^{1,2}([0,\tau]; M)=W^{1,2}(\gamma^\ast TM)$
(where the $L^2$ covariant derivative along $\gamma$ associated with the Levi-Civita connection $\nabla^h$ of the metric $h$
is defined as (\ref{e:covariant-derivative})).

Let $P$ be as above, and let $Q$ be another $C^7$ connected submanifold in $M$ of dimension less than $n$. Consider the submanifold of
$W^{1,2}([0,\tau]; M)$,
$$
W^{1,2}([0,\tau]; M,P,Q):=\{\gamma\in W^{1,2}([0,\tau]; M)\,|\,\gamma(0)\in P,\,\gamma(\tau)\in Q\}.
$$
Its tangent space at $\gamma\in W^{1,2}([0,\tau]; M,P,Q)$ is
$$
W^{1,2}_{P\times Q}(\gamma^\ast M):=\{\xi\in W^{1,2}(\gamma^\ast TM)\,|\,\xi(0)\in T_{\gamma(0)}P,\;\xi(\tau)\in T_{\gamma(\tau)}Q\}.
$$
Let $C^1([0,\tau];M, P, Q)=\{\gamma\in C^1([0,\tau];M)\,|\, \gamma(0)\in P,\;\gamma(\tau)\in Q\}$ and let
$$
C^1_A([0,\tau];M, P, Q)
$$
consist of all $A$-admissible  curves in $C^1([0,\tau];M, P, Q)$. The latter is
 an open subset of the Banach manifold $C^1([0,\tau];M, P, Q)$.
Define the energy functional $\mathcal{E}_{P,Q}:C^1_A([0,\tau];M, P, Q)\to\R$ by
\begin{equation}\label{e:Fenergy}
\mathcal{E}_{P,Q}(\gamma)=\frac{1}{2}\int^\tau_0L(\dot{\gamma}(t))dt.
\end{equation}
It is $C^{2}$, and  $\gamma\in C^1_A([0,\tau];M, P, Q)$ is a critical point of $\mathcal{E}_{P,Q}$ if and only if $\gamma$
is an $L$-geodesic satisfying the boundary condition
\begin{equation}\label{e:1.4}
\left\{\begin{array}{ll}
&g_{\dot\gamma(0)}(u,\dot\gamma(0))=0\quad\forall u\in
T_{\gamma(0)}P,\\
&g_{\dot\gamma(\tau)}(v,\dot\gamma(\tau))=0\quad\forall v\in
T_{\gamma(\tau)}Q \end{array}\right.
\end{equation}
(cf. \cite[Chap.1, \S1]{BuGiHi}, \cite[Proposition~2.1]{CaJaMa3} and \cite[Prop.~3.1, Cor.3.7]{Jav15}.
Note that $dL(\dot\gamma(t))\equiv 0$).
The curve $\gamma$ satisfying (\ref{e:1.4})
is said to be $g_{\dot\gamma}$-orthogonal (or perpendicular) to $P$ and $Q$.
Clearly, (\ref{e:1.4}) implies that $\dot\gamma(0)\in TP^\bot\subset A$ and
 $\dot\gamma(\tau)\in TQ^\bot\subset A$.

For an $L$-geodesic $\gamma:[0,\tau]\to M$ satisfying (\ref{e:1.4}), from now on
 we also suppose that  both $g_{\dot\gamma(0)}|_{T_{\gamma(0)}P\times T_{\gamma(0)}P}$ and
 $g_{\dot\gamma(\tau)}|_{T_{\gamma(\tau)}Q\times T_{\gamma(\tau)}Q}$ are nondegenerate.
 (As noted in \cite[Remark~3.9]{Jav15}, these hold if $P$ (resp. $Q$) is a hypersurface of $M$
 and $L(\dot\gamma(0))\ne 0$ (resp. $L(\dot\gamma(\tau))\ne 0$).)
 Then  the normal second fundamental forms $\tilde{S}^P_{\dot\gamma(0)}$ and $\tilde{S}^Q_{\dot\gamma(\tau)}$ are well-defined.
 Suppose that $\gamma\in C^1_A([0,\tau];M, P, Q)$ is a geodesic as above, hence $C^4$.
 Let $\Lambda:[0,\tau]\times(-\varepsilon,\varepsilon)\to M,\;(t,s)\mapsto\gamma_s(t)$ be a $C^4$ variation of $\gamma_0=\gamma$
 and $W=\partial_s\Lambda|_{s=0}$.
 By \cite[\S3]{Jav15} we have the following relation between
 the Hessian of $\mathcal{E}_{P,Q}$ at  $\gamma\in C^1_A([0,\tau];M, P, Q)$ and the second variation $\frac{d^2}{ds^2}\mathcal{E}_{P,Q}(\gamma_s)|_{s=0}$:
\begin{eqnarray}\label{e:secondDiff}
D^2\mathcal{E}_{P,Q}(\gamma)[W, W]&=&\frac{d^2}{ds^2}\mathcal{E}_{P,Q}(\gamma_s)|_{s=0}=\int_0^\tau \left(g_{\dot\gamma}(R_{\dot\gamma}(\dot\gamma,W)\dot\gamma, W)+g_{\dot\gamma}(D_{\dot\gamma}^{\dot\gamma}W,D_{\dot\gamma}^{\dot\gamma}W)\right)dt
\nonumber\\
&&+g_{\dot\gamma(0)}(\tilde{S}^P_{\dot\gamma(0)}(W(0)),W(0))
-g_{\dot\gamma(\tau)}(\tilde{S}^Q_{\dot\gamma(\tau)}(W(\tau)),W(\tau)).
 \end{eqnarray}
(Here we use the equality $R^{\gamma}(\dot\gamma,V)\dot\gamma=R_{\dot\gamma}(\dot\gamma,V)\dot\gamma$
 in \cite[page 66]{Jav15}.) If $V\in T_{\gamma}C^1_A([0,\tau];M, P, Q)$ comes from another $C^4$ variation of $\gamma_0=\gamma$,
as usual we derive from (\ref{e:Rsym7}) and symmetry of $\tilde{S}^P_{v}$ that
 \begin{eqnarray}\label{e:secondDiff*}
&&D^2\mathcal{E}_{P,Q}(\gamma)[V, W]=\int_0^\tau \left(g_{\dot\gamma}(R_{\dot\gamma}(\dot\gamma,V)\dot\gamma, W)+g_{\dot\gamma}(D_{\dot\gamma}^{\dot\gamma}V,D_{\dot\gamma}^{\dot\gamma}W)\right)dt
\nonumber\\
&&\qquad+g_{\dot\gamma(0)}(\tilde{S}^P_{\dot\gamma(0)}(V(0)),W(0))
-g_{\dot\gamma(\tau)}(\tilde{S}^Q_{\dot\gamma(\tau)}(V(\tau)),W(\tau)).
 \end{eqnarray}
 According to (\ref{e:covariant-derivative}),  it is clear that the right side of (\ref{e:secondDiff*})
actually defines  a continuous  symmetric bilinear form  ${\bf I}^\gamma_{P,Q}$ on $W^{1,2}_{P\times Q}(\gamma^\ast TM)$,
 called as the  \textsf{$(P,Q)$-index form of $\gamma$}.
 When $V$ is also $C^2$, by the proof of \cite[Proposition~3.11]{Jav15}
 \begin{eqnarray}\label{e:secondDiff+}
{\bf I}^\gamma_{P,Q}(V, W)&=&\int_0^\tau \left(g_{\dot\gamma}(R_{\dot\gamma}(\dot\gamma,V)\dot\gamma, W)-g_{\dot\gamma}(D_{\dot\gamma}^{\dot\gamma}D_{\dot\gamma}^{\dot\gamma}V,W)\right)dt
\nonumber\\
&&g_{\dot\gamma(0)}(D_{\dot\gamma}^{\dot\gamma}V(0)-\tilde{S}^P_{\dot\gamma(0)}(V(0)),W(0))\nonumber\\
&&-g_{\dot\gamma(\tau)}(D_{\dot\gamma}^{\dot\gamma}V(\tau)-\tilde{S}^Q_{\dot\gamma(\tau)}(V(\tau)),W(\tau)).
 \end{eqnarray}
 Then  $V\in W^{1,2}_{P\times Q}(\gamma^\ast TM)$ belongs to ${\rm Ker}({\bf I}^\gamma_{P,Q})$ if and only if
it is $C^4$ and satisfies
\begin{eqnarray} \label{e:kernel}
\left.\begin{array}{ll}
D_{\dot\gamma}^{\dot\gamma}D_{\dot\gamma}^{\dot\gamma}V-R_{\dot\gamma}(\dot\gamma,V)\dot\gamma=0,\\
{\rm tan}^P_{\dot\gamma(0)}\big((D_{\dot\gamma}^{\dot\gamma}V)(0)\big)=\tilde{S}^P_{\dot\gamma(0)}(V(0)),\\
{\rm tan}^Q_{\dot\gamma}\big((D_{\dot\gamma}^{\dot\gamma}V)(\tau)\big)=\tilde{S}^Q_{\dot\gamma(\tau)}(V(\tau))
\end{array}\right\}
 \end{eqnarray}
because $\gamma$ is $C^4$ and  $R^i_{jkl}$ are $C^{2}$.
To prove the implication to the right,   we may construct
 $C^4$ parallel orthonormal frame  fields $E_1,\cdots,E_n$ along $\gamma$ as done below (2.4),
and reduce the expected conclusions to the standard result in differential equations.
 Elements in $\mathscr{J}^{P,Q}_\gamma:={\rm Ker}({\bf I}^\gamma_{P,Q})$
are called \textsf{$(P,Q)$-Jacobi field} along $\gamma$.

Recall that $g_v(v,v)=L(v)$ (\cite[Proposition~2.3]{Jav15}).
Suppose $L\circ\dot\gamma\ne 0$. Then there exists a $\gamma^\ast TM=\mathbb{R}\dot{\gamma}\oplus\dot\gamma^\bot$, where
 $(\dot\gamma^\bot)_t=\{v\in T_{\gamma(t)}M\,|\, g_{\dot{\gamma}(t)}(\dot\gamma(t),v)=0\}$.
Therefore each $X\in\gamma^\ast TM$ has a decomposition
$\textsf{tan}_\gamma(X)+ \textsf{nor}_\gamma(X)$, where
\begin{eqnarray}\label{e:TangNormal}
\textsf{tan}_\gamma(X):=\frac{g_{\dot\gamma}(X,\dot\gamma)}{g_{\dot\gamma}(\dot\gamma,
\dot\gamma)}\dot\gamma\quad\hbox{and}\quad
\textsf{nor}_\gamma(X):=X-\frac{g_{\dot\gamma}(X,\dot\gamma)}{g_{\dot\gamma}(\dot\gamma,
\dot\gamma)}\dot\gamma.
\end{eqnarray}
Clearly, $W^{1,2}_{P\times Q}(\gamma^\ast TM)$ is invariant for operations $\textsf{tan}_{\gamma}$ and $\textsf{nor}_\gamma$.
Let
 \begin{eqnarray*}
W^{1,2}_{P\times Q}(\gamma^\ast TM)^\bot:=\{\textsf{nor}_\gamma(X)\,|\,X\in W^{1,2}_{P\times Q}(\gamma^\ast TM)\}.
\end{eqnarray*}
Denote by ${\bf I}^{\gamma,\bot}_{P,Q}$ be the restriction of ${\bf I}^\gamma_{P,Q}$
 to $W^{1,2}_{P\times Q}(\gamma^\ast TM)^\bot$. We have\\

 \noindent{\bf Proposition~B}. {\it
  Let $\gamma:[0,\tau]\to M$ be an $L$-geodesic satisfying (\ref{e:1.4}). Suppose
 $L\circ\dot\gamma\ne 0$. Then
 \begin{description}
\item[(i)] ${\bf I}^\gamma_{P,Q}(\textsf{nor}_\gamma(V), \textsf{tan}_\gamma(W))=0$ for all $V, W\in W^{1,2}_{P\times Q}(\gamma^\ast TM)$.\\
\item[(ii)] ${\rm Ker}({\bf I}^{\gamma,\bot}_{P,Q})={\rm Ker}({\bf I}^\gamma_{P,Q})$,
and ${\rm Index}({\bf I}^{\gamma,\bot}_{P,Q})={\rm Index}({\bf I}^\gamma_{P,Q})$
if $g_{\dot\gamma(t)}$ is positive definite  for some (and so all)  $t$ in $[0,\tau]$,
where ${\rm Index}({\bf I}^{\gamma,\bot}_{P,Q})$ and ${\rm Index}({\bf I}^\gamma_{P,Q})$ are Morse indexes of
${\bf I}^{\gamma,\bot}_{P,Q}$ and ${\bf I}^\gamma_{P,Q}$, respectively.
\end{description}}
Its proof will be given at the end of Section~\ref{sec:2}.

Let $\gamma:[0,\tau]\to M$ be an $L$-geodesic. (It is $C^4$.)
By \cite[Definition~3.12]{Jav15}, a $C^2$
vector field $J$ along $\gamma$ is said to be a  \textsf{Jacobi field} if it satisfies the so-called  \textsf{Jacobi equation}
\begin{equation}\label{e:JacobiEq}
D_{\dot{\gamma}}^{\dot{\gamma}}D_{\dot{\gamma}}^{\dot{\gamma}}J-R_{\dot\gamma}(\dot\gamma,J)\dot\gamma=0.
\end{equation}
(Jacobi fields along $\gamma$ must be $C^4$ because $\gamma$ is $C^4$ and  $R^i_{jkl}$ are $C^{2}$.)
The set $\mathscr{J}_\gamma$ of all Jacobi fields along $\gamma$  is a $2n$-dimensional vector space (\cite[Lemma~3.14]{Jav15}).
For $0\le t_1<t_2\le\tau$ if there exists a nonzero Jacobi field $J$ along $\gamma|_{[t_1,t_2]}$ such that $J$ vanishes at $\gamma(t_1)$ and $\gamma(t_2)$, then $\gamma(t_1)$ and $\gamma(t_2)$ are said to be mutually \textsf{conjugate} along $\gamma|_{[t_1,t_2]}$.

Let $P\subset M$ be as above, and
let $\gamma:[0,\tau]\to M$ be an $L$-geodesic such that
$\gamma(0)\in P$, $\dot\gamma(0)$ is $g_{\dot\gamma(0)}$-orthogonal to $P$,
and  that  $g_{\dot\gamma(0)}|_{T_{\gamma(0)}P\times T_{\gamma(0)}P}$ is nondegenerate.
 A Jacobi field $J$
along $\gamma$ is called  \textsf{$P$-Jacobi} if
\begin{equation}\label{e:P-JacobiField}
J(0)\in T_{\gamma(0)}P\quad\hbox{and}\quad
{\rm tan}^P_{\dot\gamma(0)}\big((D_{\dot{\gamma}}^{\dot{\gamma}}J)(0)\big)=\tilde{S}^P_{\dot\gamma(0)}(J(0)).
\end{equation}
An instant $t_0\in (0,\tau]$ is  called \textsf{$P$-focal} if there exists a non-null $P$-Jacobi field $J$ such that $J(t_0)=0$;
and $\gamma(t_0)$ is said to be a \textsf{$P$-focal point} along $\gamma$.
The dimension of the space $\mathscr{J}^P_\gamma$ of all $P$-Jacobi fields along $\gamma$  is equal to $n=\dim M$.
The dimension $\mu^P(t_0)$ of
$$
\mathscr{J}^P_\gamma(t_0):=\{J\in \mathscr{J}^P_\gamma\,|\, J(t_0)=0\}
$$
is called the (geometrical)  \textsf{multiplicity} of $\gamma(t_0)$.
For convenience we understand $\mu^{P}(t_{0})=0$ if $\gamma(t_0)$ is not a $P$-focal point along $\gamma$.
Then the claim below (\ref{e:secondDiff+})  implies that for any $t\in (0, \tau]$,
\begin{equation}\label{e:P-JacobiField1-}
{\rm Ker}({\bf I}^{\gamma_t}_{P,\gamma(t)})=
\mathscr{J}^P_{\gamma_t}(t)\quad\hbox{with $\gamma_t=\gamma|_{[0,t]}$}.
\end{equation}
In particular, if $g_{\dot\gamma(t)}$ is positive definite for some $t\in [0,\tau]$,
this and Proposition~B(ii) give
\begin{equation}\label{e:P-JacobiField1}
{\rm Ker}({\bf I}^{\gamma,\bot}_{P,q})={\rm Ker}({\bf I}^\gamma_{P,q})=
\mathscr{J}^P_\gamma(\tau)
\quad\hbox{with $q=\gamma(\tau)$}.
\end{equation}

\noindent{\bf 1.3. Main results}.
Recall that  the (Morse) index of a bilinear and symmetric form $I$ on a vector space $E$ is the maximum number of linearly
independent elements of $E$ on which the function $I$ is negative.
Here is our main result.

\begin{theorem}[Morse index theorem]\label{th:MorseIndex}
Let $P, Q$ be two $C^7$ submanifolds in $M$ of dimension less than $n=\dim M$,
and let $\gamma:[0,\tau]\to M$ be an $L$-geodesic which is perpendicular to $P$ at $\gamma(0)$, and $q=\gamma(\tau)$.
Suppose that $g_{\dot\gamma(0)}$ is positive definite (and so each $g_{\dot\gamma(t)}$ is positive definite
and $L\circ\dot\gamma$ is a positive constant by Remark~A below (\ref{e:compat})).
Then
\begin{description}
\item[(i)] $(0,\tau]$ contains only a finite number of instants $t$ such that $\gamma(t)$ are $P$-focal points along $\gamma|_{[0,t]}$.
\item[(ii)] There is a partition $0=t_0<\cdots<t_m=\tau$ such that
$\gamma((0, t_1])$ contains  no $P$-focal points and that
 for any two instants $a<b$ in each interval $[t_i, t_{i+1}]$, $i=0,\cdots, m-1$,
points $\gamma(a)$ and $\gamma(b)$ are not mutually conjugate along $\gamma|_{[a, b]}$.
\item[(iii)]
\begin{equation}\label{e:MS1}
{\rm Index}({\bf I}^\gamma_{P,q})=\sum\limits_{t_{0}\in(0,\tau)}\mu^{P}(t_{0}).
\end{equation}
Moreover,  if $\gamma$ is also perpendicular to  $Q$ at $q=\gamma(\tau)$
and $\{X(\tau)\,|\,X\in\mathscr{J}^P_\gamma\}\supseteq T_{\dot\gamma(\tau)}Q$
(the latter may be satisfied if $\gamma(\tau)$ is not a $P$-focal point),
then
\begin{eqnarray}\label{e:MS2}
&&{\rm Index}({\bf I}^\gamma_{P,Q})={\rm Index}({\bf I}^\gamma_{P,q})+ {\rm Index}(\mathcal{A}_\gamma),\\
&&{\rm Index}({\bf I}^{\gamma,\perp}_{P,q})=\sum\limits_{t_{0}\in(0,\tau)}\mu^{P}(t_{0}),\label{e:MS1bot}\\
&&{\rm Index}({\bf I}^{\gamma,\perp}_{P,Q})={\rm Index}({\bf I}^{\gamma,\perp}_{P,q})+ {\rm Index}(\mathcal{A}_\gamma),\label{e:MS3bot}
\end{eqnarray}
where $\mathcal{A}_\gamma$ is the bilinear symmetric form on $\mathscr{J}^{P}_\gamma$ defined by
$$
\mathcal{A}_\gamma(J_1,J_2)=g_{\dot\gamma(\tau)}(D_{\dot\gamma}^{\dot\gamma}J_1(\tau)+\tilde{S}^Q_{\dot\gamma(\tau)}(J_1(\tau)), J_2(\tau)).
$$
\end{description}
\end{theorem}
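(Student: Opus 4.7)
My plan is to reduce the problem to a standard self-adjoint Sturm-Liouville system on $[0,\tau]$ via a parallel frame and then to apply Hermann's Morse index theorem from \cite[\S31]{Her68}. Since $g_{\dot\gamma(t)}$ is positive definite and $\gamma$ is $C^4$, the almost compatibility (\ref{e:compat}) together with $D^{\dot\gamma}_{\dot\gamma}\dot\gamma=0$ (which makes the Cartan correction vanish when the reference vector is $\dot\gamma$) lets me build a $C^4$ parallel $g_{\dot\gamma}$-orthonormal frame $E_1,\dots,E_n$ along $\gamma$. In this frame, (\ref{e:secondDiff*}) and (\ref{e:kernel}) take the familiar form
\[
\int_0^\tau\bigl(|\dot x(t)|^2+\langle \mathcal{R}(t)x(t),x(t)\rangle\bigr)\,dt+\langle B_0\, x(0),x(0)\rangle-\langle B_\tau\, x(\tau),x(\tau)\rangle
\]
on $W^{1,2}([0,\tau];\R^n)$ subject to linear constraints on $x(0)$ and $x(\tau)$; here $\mathcal{R}$ is the symmetric curvature matrix (symmetry following from (\ref{e:Rsym7})--(\ref{e:Rsym8})) and $B_0,B_\tau$ are the matrix representations of $\tilde S^P_{\dot\gamma(0)}$ and $\tilde S^Q_{\dot\gamma(\tau)}$, which are symmetric by the discussion in subsection~1.2. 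This is precisely the setting of \cite[\S31]{Her68}.

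For (i) and (\ref{e:MS1}), I would invoke Hermann's theorem applied to the above Sturm-Liouville system with Dirichlet condition at $\tau$ (corresponding to $Q=\{q\}$); this immediately yields ${\rm Index}({\bf I}^\gamma_{P,q})=\sum_{t_0\in(0,\tau)}\mu^P(t_0)$. Since the left-hand side is the Morse index of a form that is a compact perturbation of a positive-definite one and hence finite, the right-hand side is a finite sum, which proves (i). For (ii) I would use a Poincar\'e-type estimate: since $R_{\dot\gamma}$ is bounded along the compact curve $\gamma$, for any subinterval $[a,b]\subset[0,\tau]$ of sufficiently small length the fixed-endpoint index form $\int_a^b(|\dot x|^2+\langle\mathcal{R}x,x\rangle)\,dt$ on $W^{1,2}_0$ is positive definite, so no conjugate pair can lie inside $[a,b]$. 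Choosing a partition of $[0,\tau]$ finer than this threshold and whose interior points avoid the finitely many $P$-focal instants from (i) yields the desired $0=t_0<\cdots<t_m=\tau$.

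For the remaining formulas in (iii), (\ref{e:MS1bot}) is immediate from (\ref{e:MS1}) and Proposition~B(ii). To obtain (\ref{e:MS2}), set
\[
\mathcal{K}:=\{J\in\mathscr{J}^P_\gamma\mid J(\tau)\in T_{\gamma(\tau)}Q\}.
\]
The hypothesis $\{X(\tau)\mid X\in\mathscr{J}^P_\gamma\}\supseteq T_{\gamma(\tau)}Q$ makes the evaluation map $\mathcal{K}\to T_qQ$ surjective, so it admits a linear right inverse whose image $\mathcal{K}_0\subset\mathcal{K}$ satisfies $\dim\mathcal{K}_0=\dim T_qQ$ and $W^{1,2}_{P\times Q}(\gamma^\ast TM)=W^{1,2}_{P\times q}(\gamma^\ast TM)\oplus\mathcal{K}_0$. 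A direct computation from (\ref{e:secondDiff+}), using that elements of $\mathcal{K}_0$ are $P$-Jacobi (so the bulk integral and the boundary term at $0$ vanish) and that $W^{1,2}_{P\times q}$-elements vanish at $\tau$, shows both that the two summands are ${\bf I}^\gamma_{P,Q}$-orthogonal and that the restriction of ${\bf I}^\gamma_{P,Q}$ to $\mathcal{K}_0$ coincides with $\mathcal{A}_\gamma$. Additivity of the Morse index then gives (\ref{e:MS2}); applying Proposition~B(ii) to both sides of (\ref{e:MS2}) yields (\ref{e:MS3bot}).

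\noindent\textbf{Main obstacle.} The principal technical difficulty will be justifying that the Chern-connection second-variation formula (\ref{e:secondDiff*}) produces a genuinely \emph{self-adjoint} Sturm-Liouville system after the frame change, even though $D^\xi_{\dot c}$ is only almost $g_\xi$-compatible in general. The crucial observation to verify carefully is that when the reference vector is $\dot\gamma$ itself, the Cartan-tensor correction in (\ref{e:compat}) vanishes because $D^{\dot\gamma}_{\dot\gamma}\dot\gamma=0$, so the parallel frame along $\gamma$ is truly $g_{\dot\gamma}$-orthonormal and the resulting $\mathcal{R}(t)$ is symmetric by (\ref{e:Rsym7})--(\ref{e:Rsym8}). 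Once this is established, Hermann's theorem applies without modification and the remaining work is mostly bookkeeping.
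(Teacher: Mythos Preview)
Your reduction to a parallel $g_{\dot\gamma}$-orthonormal frame and appeal to Hermann is exactly the paper's starting point, and your arguments for (ii) (Poincar\'e-type estimate) and for (\ref{e:MS2}) (the direct-sum splitting $W^{1,2}_{P\times q}\oplus\mathcal K_0$ with ${\bf I}^\gamma_{P,Q}$-orthogonality read off from (\ref{e:secondDiff+})) are correct and essentially match the paper's Lemma~\ref{lem:new-add} and Step~4. The gap is in your derivation of (\ref{e:MS1}) when $\dim P>0$. Hermann's theorem in \cite[\S31]{Her68} (Theorem~\ref{th:Hermann} as quoted in the paper) computes the index of $\mathcal I$ on the space $\Omega([0,\tau],E)$ of curves that, besides $v(0)\in E_0$ and $v(\tau)=0$, satisfy the \emph{natural boundary condition} $\langle\dot v(0),w\rangle=-\mathcal Q(v(0),w)$ for all $w\in E_0$. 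Pulled back through the frame this is the proper subspace
\[
PC^2_{P\times q}(\gamma^\ast TM)_0=\bigl\{V\in PC^2_{P\times q}(\gamma^\ast TM)\ \big|\ {\rm tan}^P_{\dot\gamma(0)}\bigl(D^{\dot\gamma}_{\dot\gamma}V(0)\bigr)=\tilde S^P_{\dot\gamma(0)}(V(0))\bigr\},
\]
of codimension $\dim T_{\gamma(0)}P$ in $PC^2_{P\times q}(\gamma^\ast TM)$. Hermann therefore gives only $\mathrm{Index}\bigl(\tilde{\bf I}^\gamma_{P,q}|_{PC^2_{P\times q}(\gamma^\ast TM)_0}\bigr)=\sum_{t_0}\mu^P(t_0)$ (the paper's Proposition~\ref{prop:MorseIndex}), and you still owe the equality of this with the index on the full space. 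The paper spends its Steps~2--3 on precisely this: one builds the broken--Jacobi subspace $PC^2_{P\times q}(\gamma^\ast TM)_{00}\subset PC^2_{P\times q}(\gamma^\ast TM)_0$, proves a $\tilde{\bf I}^\gamma_{P,q}$-orthogonal splitting $PC^2_{P\times q}=\mathbf T\oplus PC^2_{P\times q}(\gamma^\ast TM)_{00}$, and shows $\tilde{\bf I}^\gamma_{P,q}$ is positive definite on $\mathbf T$. The last step needs Lemma~\ref{lem:Sa2.9}, i.e.\ positivity of $\tilde{\bf I}^{\gamma_0}_{P,\gamma(t_1)}$ on the \emph{whole} of $PC^2_{P\times\gamma(t_1)}(\gamma_0^\ast TM)$, which Corollary~\ref{cor:Hermann} does \emph{not} supply (see Remark~\ref{rm::Sa2.9}); a separate argument writing arbitrary $X$ in a $P$-Jacobi basis with piecewise~$C^1$ coefficients is required. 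Your plan contains no substitute for this bridge.

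Consequently your identified ``main obstacle''---that the Cartan correction in (\ref{e:compat}) vanishes because $D^{\dot\gamma}_{\dot\gamma}\dot\gamma=0$, so the frame stays orthonormal and $\mathcal R(t)$ is symmetric by (\ref{e:Rsym7})---is real but routine; the substantive difficulty is the codimension-$k$ discrepancy between Hermann's $\Omega$ and the domain of ${\bf I}^\gamma_{P,q}$.
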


\begin{remark}\label{rm:MorseIndex}
{\rm
For a smooth Finsler manifold $(M, F)$, Peter \cite{Pe06} used the Cartan connection to define the Morse index form,
$P$-Jacobi field and the shape operator  in the horizontal subspaces of $T(TM\setminus\{0\})$ and proved (\ref{e:MS1bot})-(\ref{e:MS3bot}).
In this case it holds naturally that $g_{\dot\gamma(t)}$ is positive definite for all $t$; see Corollary~\ref{cor:MorseIndex1}.
Different from our method outlined below (\ref{e:MS5}), Peter's proof is to follow the line of Milnor \cite{Mi} and Piccione and
Tausk \cite{PiTa}. We won't discuss whether Peter's method  is still effective
under our assumption of lower smoothness about $(M, F)$ though parallel vector fields and geodesics are
the same for the Cartan and Chern connections.
}
\end{remark}

When $L$ is equal to the square of a conic Finsler metric on $M$,
since $L$-geodesics must be nonconstant curves, it naturally holds that
$L\circ\dot\gamma>0$ and $g_{\dot\gamma(t)}$ is positive definite  for each $t\in [0,\tau]$.
Hence we have:

\begin{corollary}\label{cor:MorseIndex1}
Let $M$ and $P, Q$ be as in Theorem~\ref{th:MorseIndex}, and let $F$ be a $C^6$ conic Finsler metric on $M$ with domain $A$.
For a nonconstant $F$-geodesic $\gamma:[0,\tau]\to M$ which is perpendicular to $P$ at $\gamma(0)$, and $q=\gamma(\tau)$,
that is, it is a critical point of the functional
$\mathcal{E}_{P,q}:C^1_A([0,\tau];M, P, q)\to\R$ defined by (\ref{e:Fenergy}) with $L=F^2$,
suppose that ${\bf I}^\gamma_{P,q}$ and ${\bf I}^\gamma_{P,Q}$ with $L=F^2$ are defined as above. Then
the conclusions (i)-(ii) and (\ref{e:MS1}) in Theorem~\ref{th:MorseIndex} hold true;
moreover,  if $\gamma$ is also perpendicular to  $Q$ at $q=\gamma(\tau)$
and $\{X(\tau)\,|\,X\in\mathscr{J}^P_\gamma\}\supseteq T_{\dot\gamma(\tau)}Q$
(the latter may be satisfied if $\gamma(\tau)$ is not a $P$-focal point), then
(\ref{e:MS2}), (\ref{e:MS1bot}) and (\ref{e:MS3bot}) also hold true.
\end{corollary}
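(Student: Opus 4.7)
The plan is to derive Corollary~\ref{cor:MorseIndex1} directly from Theorem~\ref{th:MorseIndex} by showing that, in the conic Finsler setting, the positive definiteness hypothesis on $g_{\dot\gamma(0)}$ in Theorem~\ref{th:MorseIndex} is automatic, so that no new computation is required.

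First I would record what the conic Finsler assumption buys us. By the definition of a $C^6$ conic Finsler metric, every fundamental tensor $g_v$ with $v\in A$ is positive definite. Any $F$-geodesic $\gamma$ (that is, an $L$-geodesic with $L=F^2$) is by definition $A$-admissible, so $\dot\gamma(t)\in A$ for every $t\in[0,\tau]$; consequently $g_{\dot\gamma(t)}$ is positive definite for every $t$. In particular, using $L(v)=g_v(v,v)$ from \cite[Proposition~2.3]{Jav15}, we have $L(\dot\gamma(t))>0$, and Remark~A then forces $L\circ\dot\gamma$ to be a positive constant. This verifies both the standing hypothesis $L\circ\dot\gamma\ne 0$ needed for Proposition~B (and hence for the very definitions of ${\bf I}^{\gamma,\perp}_{P,q}$ and ${\bf I}^{\gamma,\perp}_{P,Q}$) and the positive-definiteness hypothesis on $g_{\dot\gamma(0)}$ appearing in Theorem~\ref{th:MorseIndex}.

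Second, the nondegeneracy assumption on $g_{\dot\gamma(0)}|_{T_{\gamma(0)}P\times T_{\gamma(0)}P}$ (and, when needed, the analogous one at $\gamma(\tau)$) that is required to make $\tilde S^P_{\dot\gamma(0)}$ and $\tilde S^Q_{\dot\gamma(\tau)}$ well-defined is likewise automatic, because positive definiteness of $g_{\dot\gamma(0)}$ on the whole tangent space $T_{\gamma(0)}M$ implies positive definiteness on any subspace, and similarly at $\gamma(\tau)$. Thus the entire apparatus (index form, Jacobi fields, $P$-Jacobi fields, focal points) set up before Theorem~\ref{th:MorseIndex} is in force for our geodesic $\gamma$ with $L=F^2$.

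Third, applying Theorem~\ref{th:MorseIndex} yields parts (i), (ii) and (\ref{e:MS1}) of the corollary without further work. Under the additional assumption that $\gamma$ is perpendicular to $Q$ at $\gamma(\tau)$ and that $\{X(\tau)\mid X\in\mathscr{J}^P_\gamma\}\supseteq T_{\dot\gamma(\tau)}Q$, the three identities (\ref{e:MS2}), (\ref{e:MS1bot}) and (\ref{e:MS3bot}) are precisely the conclusions of Theorem~\ref{th:MorseIndex} in that same situation, so they transfer verbatim. There is no genuine obstacle here; the only bookkeeping worth stressing is that the conic Finsler condition makes the positivity hypotheses universal rather than local to one tangent direction, so that all the reductions (in particular passage to the orthogonal subspace via Proposition~B and (\ref{e:P-JacobiField1})) apply.
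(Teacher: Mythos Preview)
Your proposal is correct and matches the paper's own justification: the paper simply observes (in the paragraph immediately preceding the corollary) that for a conic Finsler metric each $g_v$ is positive definite, so $g_{\dot\gamma(t)}$ is positive definite and $L\circ\dot\gamma>0$, whence Theorem~\ref{th:MorseIndex} applies directly. Your additional remark that nondegeneracy of $g_{\dot\gamma(0)}|_{T_{\gamma(0)}P\times T_{\gamma(0)}P}$ and $g_{\dot\gamma(\tau)}|_{T_{\gamma(\tau)}Q\times T_{\gamma(\tau)}Q}$ is automatic from positive definiteness is a useful clarification the paper leaves implicit.
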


After proving Theorem~\ref{th:MorseIndex} in next section we shall give some
related conclusions, consequences and examples in Section~\ref{sec:3}.

\section{Proof of Theorem~\ref{th:MorseIndex}}\label{sec:2}

Denote by  $PC^2(\gamma^\ast TM)$  the space of continuous and piecewise $C^2$ sections of the bundle $\gamma^\ast TM$ and by
 $$
 PC^{2}_{P\times Q}(\gamma^\ast TM)=\{
\xi\in PC^{2}(\gamma^\ast TM)\,|\,\xi(0)\in T_{\gamma(0)}P,\;\xi(\tau)\in T_{\gamma(\tau)}Q\}.
$$
Let $\tilde{\bf I}^\gamma_{P,Q}$ be the restriction of ${\bf I}^\gamma_{P,Q}$ to
$PC^{2}_{P\times Q}(\gamma^\ast TM)$. Since $PC^2(\gamma^\ast TM)$ is dense in
 $W^{1,2}_{P\times Q}(\gamma^\ast TM)$ and  ${\rm Ker}({\bf I}^\gamma_{P,Q})\subset PC^2(\gamma^\ast TM)$ we get
\begin{equation}\label{e:MorseIndexNullity1}
{\rm Ker}(\tilde{\bf I}^{\gamma}_{P,Q})={\rm Ker}({\bf I}^\gamma_{P,Q})\quad\hbox{and}\quad
{\rm Index}(\tilde{\bf I}^{\gamma}_{P,Q})={\rm Index}({\bf I}^\gamma_{P,Q}).
\end{equation}
Therefore (\ref{e:MS1}) is equivalent to the equality
\begin{equation}\label{e:MS5}
{\rm Index}(\tilde{\bf I}^\gamma_{P,q})=\sum\limits_{t_{0}\in(0,\tau)}\mu^{P}(t_{0}).
\end{equation}

Our proof will be completed in four steps.
Firstly, we shall use the Morse index theorem by Hermann (Theorem~\ref{th:Hermann}) and a result by
 Hartman and Wintner \cite[page 77]{HaWin} to prove the conclusions (i)-(ii) and
Proposition~\ref{prop:MorseIndex}. The latter shows that the index of the restriction of $\tilde{\bf I}^\gamma_{P,q}$ to
a subspace $PC^2_{P\times q}(\gamma^\ast TM)_0$ of $PC^2_{P\times q}(\gamma^\ast TM)$ is equal to
the right side of (\ref{e:MS5}). Next, we prove that (\ref{e:MS5}) easily follows from Proposition~\ref{prop:MorseIndex}
and Proposition~\ref{prop:Sak3.1}. The latter claims that $\tilde{\bf I}^\gamma_{P,q}$ and the restriction of $\tilde{\bf I}^\gamma_{P,q}$ to a finite dimensional subspace $PC^2_{P\times q}(\gamma^\ast TM)_{00}$ of $PC^2_{P\times q}(\gamma^\ast TM)_0$ have the same nullity and index.
In Step~3 we shall prove Proposition~\ref{prop:Sak3.1} through several lemmas which are closely related to
\cite{Jav15}. Finally, we use the same ideas as those of \cite[Theorem~1.2]{Pe06} to complete the proofs of
 the second part in (iii). The so-called index lemma (\cite[Lemma~4.2]{Pe06}) in Finsler geometry
 played a key role in the proof of \cite[Theorem~1.1]{Pe06}. Because of using Theorem~\ref{th:Hermann}
  we avoid to use the pseudo-Finsler version (Proposition~\ref{prop:Sa2.10}) of \cite[Lemma~4.2]{Pe06}.\\

\noindent{\bf Step 1}(\textsf{Prove the conclusions (i)-(ii) and the following proposition}).

\begin{proposition}\label{prop:MorseIndex}
Let $\gamma:[0,\tau]\to M$ be an $L$-geodesic which is perpendicular to $P$ at $\gamma(0)$, $q=\gamma(\tau)$
and let $g_{\dot\gamma(0)}$ (and so each $g_{\dot\gamma(t)}$) be  positive definite.
Then the index of the restriction of $\tilde{\bf I}^\gamma_{P,q}$ to
\begin{equation}\label{e:subspace}
PC^2_{P\times q}(\gamma^\ast TM)_0:=\{V\in PC^2_{P\times q}(\gamma^\ast TM)\,|\,
{\rm tan}^P_{\dot\gamma}(D_{\dot{\gamma}}^{\dot{\gamma}}V)(0)=\tilde{S}^P_{\dot\gamma(0)}(V(0))\}.
\end{equation}
 is equal to
\begin{equation*}
\sum\limits_{t_{0}\in(0,\tau)}\mu^{P}(t_{0}).
\end{equation*}
\end{proposition}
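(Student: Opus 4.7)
The plan is to reduce Proposition~\ref{prop:MorseIndex} to the classical Morse index theorem of Hermann (\cite[\S31]{Her68}) by trivializing $\gamma^\ast TM$ with a parallel $g_{\dot\gamma}$-orthonormal frame, so that $\tilde{\bf I}^\gamma_{P,q}$ becomes a standard Sturm-type quadratic form on vector-valued piecewise-$C^2$ functions on $[0,\tau]$.

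First, because $\gamma$ is an $L$-geodesic, $D^{\dot\gamma}_{\dot\gamma}\dot\gamma\equiv 0$, and so the Cartan correction term $2C_{\dot\gamma}(D^{\dot\gamma}_{\dot\gamma}\dot\gamma,\cdot,\cdot)$ in the compatibility identity (\ref{e:compat}) vanishes identically along $\gamma$. Hence $D^{\dot\gamma}_{\dot\gamma}$ is genuinely $g_{\dot\gamma}$-metric, and parallel transport along $\gamma$ preserves $g_{\dot\gamma(t)}$-orthonormality. Starting from a $g_{\dot\gamma(0)}$-orthonormal basis of $T_{\gamma(0)}M$ whose first $k:=\dim P$ vectors span $T_{\gamma(0)}P$ (possible because $g_{\dot\gamma(0)}$ is positive definite and $\dot\gamma(0)$ is $g_{\dot\gamma(0)}$-orthogonal to $T_{\gamma(0)}P$) and with $E_n(0)=\dot\gamma(0)/\sqrt{L(\dot\gamma(0))}$ (using that $L\circ\dot\gamma$ is a positive constant by Remark~A), parallel translation yields a $C^4$ frame $E_1,\dots,E_n$ along $\gamma$ that is $g_{\dot\gamma(t)}$-orthonormal at every $t$.

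Writing $V(t)=\sum_i v^i(t)E_i(t)$ for $V\in PC^2_{P\times q}(\gamma^\ast TM)_0$, parallelism yields $D^{\dot\gamma}_{\dot\gamma}V=\sum_i\dot v^i E_i$, and (\ref{e:Rsym7}) together with orthonormality makes the matrix $R(t)=(R_{ij}(t))$ with $R_{ij}(t):=g_{\dot\gamma(t)}(R_{\dot\gamma(t)}(\dot\gamma(t),E_j(t))\dot\gamma(t),E_i(t))$ a symmetric $C^2$ function of $t$. The expression (\ref{e:secondDiff*}) (with $V=W$, $Q=\{q\}$, whence $W(\tau)=0$) reduces to
\[
\tilde{\bf I}^\gamma_{P,q}(V,V)=\int_0^\tau\bigl(\|\dot v(t)\|^2+v(t)^TR(t)v(t)\bigr)dt+\langle S_P v(0),v(0)\rangle_{\R^n},
\]
where $\|\cdot\|$ is the Euclidean norm on $\R^n$, $S_P$ is the matrix of $\tilde S^P_{\dot\gamma(0)}$ in the frame at $0$, and $v$ satisfies $v(\tau)=0$, $v(0)\in\Pi:=\R^k\times\{0\}\subset\R^n$, and the linear shape-operator relation between the $\Pi$-component of $\dot v(0)$ and $v(0)$. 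This is precisely the self-adjoint boundary-value setting of \cite[\S31]{Her68}: (\ref{e:JacobiEq}) becomes $\ddot v+R(t)v=0$, the $P$-Jacobi conditions (\ref{e:P-JacobiField}) become Hermann's boundary data at $t=0$, and $P$-focal instants correspond bijectively, with identical multiplicities, to conjugate instants of the scalarized problem. Hermann's Morse index theorem then yields Proposition~\ref{prop:MorseIndex}; for Theorem~\ref{th:MorseIndex}(i)--(ii) I would invoke \cite[page~77]{HaWin} on discreteness of zeros of solutions of linear second-order ODE systems applied to $\ddot v+R(t)v=0$.

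The main obstacle is verifying carefully that parallel transport along $\gamma$ via $D^{\dot\gamma}_{\dot\gamma}$ is well defined and preserves $g_{\dot\gamma(t)}$-orthonormality in the low-smoothness setting of the paper (the Chern symbols $\Gamma^i_{jk}$ being only $C^3$), and that after scalarization the quadratic form and its boundary data match the hypotheses of Hermann's theorem verbatim. Once the orthonormal parallel frame is in place, the rest of the argument is essentially bookkeeping to identify conjugate-point multiplicities of the linear BVP with the geometric multiplicities $\mu^P(t_0)$.
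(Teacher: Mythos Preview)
Your proposal is correct and follows essentially the same approach as the paper: both trivialize $\gamma^\ast TM$ by a $g_{\dot\gamma}$-parallel orthonormal frame (using that $D^{\dot\gamma}_{\dot\gamma}\dot\gamma=0$ kills the Cartan term in (\ref{e:compat})), reduce $\tilde{\bf I}^\gamma_{P,q}$ to a scalar Sturm-type quadratic form on $\R^n$-valued functions, verify that the subspace $PC^2_{P\times q}(\gamma^\ast TM)_0$ corresponds exactly to Hermann's space $\Omega([0,\tau],E)$ with $(E_0,\mathcal{Q})=(\R^k,\mathfrak{Q})$, and then invoke Hermann's index theorem together with \cite{HaWin} for (i)--(ii). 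The only cosmetic difference is a sign convention in the curvature matrix (the paper builds $\mathfrak{R}_t$ with an extra minus so that Hermann's form appears verbatim), which is immaterial.
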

If $P$ is a point $p=\gamma(0)$, then
$PC^2_{p\times q}(\gamma^\ast TM)_0=PC^2_{p\times q}(\gamma^\ast TM)$
and Proposition~\ref{prop:MorseIndex} gives  (\ref{e:MS5}) and hence
(\ref{e:MS1}).

Proposition~\ref{prop:MorseIndex} will be derived from
 the following Morse index theorem proved in \cite[\S31]{Her68} by R.A. Hermann.

\begin{theorem}[Morse index theorem]\label{th:Hermann}
Let $E$ be a finite-dimensional real vector space with inner product $\langle\cdot,\cdot\rangle$,
$E_0\subset E$ a subspace, $\mathcal{Q}:E_0\times E_0\to\mathbb{R}$ a bilinear, symmetric form.\footnote{
In \cite[\S31]{Her68}, $E$, $E_0$, $\mathcal{Q}$ and $\tau$
were denoted by $V$, $W$, $Q$ and $a$, respectively. }
Denote by $\Omega([0,\tau], E)$  the space of continuous,
piecewise $C^2$ curves $v:[0,\tau]\to E$  satisfying the following
conditions:
$$
v(0)\in E_0,\quad\langle \dot{v}(0),w\rangle= -\mathcal{Q}(v(0), w)\;\forall w\in E_0,\quad v(\tau)=0.
$$
It is a real vector space.
Given a continuous\footnote{It was not required in \cite[\S31]{Her68} that $t\mapsto R_t$ is continuous.
 However,  this condition is necessary in order to ensure that
the integrations of $\langle R_t(v(t)), v(t)\rangle$ appeared on \cite[page~403]{Her68} and other places
are well-defined.} map $[0,\tau]\ni t\mapsto {R}_t\in\mathscr{L}_s(E)$, where
$\mathscr{L}_s(E)$ consists of  linear transformations
$L:E\to E$ satisfying $\langle L(u), v\rangle=\langle u, L(v)\rangle$ for any $u,v\in E$,
consider the quadratic form
$$
\mathcal{I}(v)=-\mathcal{Q}(v(0),v(0))+\int^\tau_0\left[\langle \dot{v}(t),\dot{v}(t)\rangle-\langle{R}_t(v(t)), v(t)\rangle\right] dt
$$
for $v\in\Omega([0,\tau],E)$, and the differential operator
$$
\mathcal{J}=\frac{d^2}{dt^2}+{R}_t.
$$
 Define the \textsf{Morse index} of $\mathcal{I}$ as the maximum number of linearly
independent elements of $\Omega([0,\tau],E)$ on which the function $\mathcal{I}$ is negative.
A point $a\in(0,\tau]$ is said to be a \textsf{{focal point}} for the operator $\mathcal{J}$
and boundary condition $(E_0, \mathcal{Q})$ if there is a nontrivial $C^2$
curve $u:[0,a]\to E$  satisfying
\begin{eqnarray*}
\mathcal{J}(u) = 0,\quad u(0)\in E_0,\quad
\langle \dot{u}(0), w\rangle = -\mathcal{Q}(u(0), w)\;\forall w\in E_0,\quad u(a) = 0.
\end{eqnarray*}
(Clearly, $u$ can uniquely extend a $C^2$ curve $\tilde{u}:[0,\tau]\to E$ satisfying $\mathcal{J}(\tilde{u})=0$
by the continuation theorem for solutions of ordinary differential equations.)
The \textsf{multiplicity} of such a focal point is equal to the dimension of the linear space of
all curves satisfying these conditions (hence, it is always finite and no greater
than the dimension of $E$). Then
the index of $\mathcal{I}$ is finite and equal to the sum of multiplicities of the
focal points contained in the open interval $(0,\tau)$. It is also equal to the maximal
number of linearly independent elements of $\Omega([0,\tau], E)$ that are $C^2$ and
are eigenfunctions of the differential operator $\mathcal{J}$ for positive eigenvalues.
\end{theorem}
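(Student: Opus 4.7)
The plan is to adapt the classical Morse--Milnor strategy to this abstract setting, reducing the computation of ${\rm Index}(\mathcal{I})$ to a finite-dimensional problem via broken Jacobi fields and then tracking how the reduced index grows as the right endpoint sweeps across $(0,\tau]$.

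First I would choose a partition $0 = s_0 < s_1 < \cdots < s_N = \tau$ fine enough that on each $[s_i, s_{i+1}]$ the Dirichlet problem $\mathcal{J}(u) = 0$, $u(s_i) = u(s_{i+1}) = 0$ has only the trivial solution; this is possible by continuous dependence of solutions of the linear ODE $\mathcal{J}u = 0$ on initial data. Let $\Omega^\flat \subset \Omega([0,\tau], E)$ be the ``broken Jacobi'' subspace of $v$ solving $\mathcal{J}(v) = 0$ on each $(s_i, s_{i+1})$ and satisfying the boundary data at both endpoints, and let $\Omega^\natural \subset \Omega([0,\tau], E)$ consist of curves vanishing at every partition point $s_i$ (in particular at $0$ and $\tau$). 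Integration by parts, using the identity $\langle \dot v^\flat(0), \cdot\rangle = -\mathcal{Q}(v^\flat(0), \cdot)$ on $E_0$ and the vanishing of $v^\natural$ at each $s_i$ to kill corner and boundary terms, shows that $\Omega([0,\tau], E) = \Omega^\flat \oplus \Omega^\natural$ is $\mathcal{I}$-orthogonal. By the choice of partition, $\mathcal{I}|_{\Omega^\natural}$ is a direct sum of Dirichlet index forms on the subintervals, each positive definite, so ${\rm Index}(\mathcal{I}) = {\rm Index}(\mathcal{I}|_{\Omega^\flat})$ is automatically finite since $\dim \Omega^\flat \leq N\dim E$.

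Next, for $r \in (0,\tau]$ let $i(r)$ denote the analogous Morse index on $[0, r]$. Extension by zero from $[0,r]$ to $[0,\tau]$ preserves both the boundary datum at $0$ and the value of the form, so $i$ is monotone nondecreasing; and $i(r) = 0$ for small $r$ because no focal points accumulate at $0$ (again by continuous dependence of $\mathcal{J}u=0$ on data). Fix the partition of $[0,\tau]$ so that the $s_j$ avoid focal points; then on the finite-dimensional broken-Jacobi space $\Omega^\flat_r$ the family $r \mapsto \mathcal{I}_r|_{\Omega^\flat_r}$ is a continuous family of symmetric bilinear forms whose kernel at $r = r_0$ consists precisely of $C^2$ solutions of $\mathcal{J}(u) = 0$ with the boundary condition at $0$ and $u(r_0)=0$, i.e., has dimension $\mu(r_0)$. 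A Sturmian crossing-form computation then yields $i(r_0^+) - i(r_0^-) = \mu(r_0)$ at each focal time and local constancy of $i$ elsewhere; summing over the finitely many focal points in $(0,\tau)$ produces ${\rm Index}(\mathcal{I}) = i(\tau) = \sum_{r_0 \in (0,\tau)} \mu(r_0)$, while a focal value at $\tau$ itself contributes only to $\ker \mathcal{I}$ and is thus correctly excluded.

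The final eigenfunction characterization follows from realizing $\mathcal{I}$ as the quadratic form of the self-adjoint Friedrichs extension of $-\mathcal{J}$ on a Hilbert completion of $\Omega([0,\tau], E)$ with the prescribed boundary conditions at $0$ and the Dirichlet condition at $\tau$: this operator has purely discrete spectrum bounded below, its eigenfunctions are $C^2$ by elliptic regularity for the second-order ODE with continuous coefficients $R_t$, and the min-max principle equates the number of negative eigenvalues of the form (equivalently, positive eigenvalues of $\mathcal{J}$) with ${\rm Index}(\mathcal{I})$. The main obstacle is the jump analysis in the previous paragraph: the Sturmian argument requires showing that all $\mu(r_0)$ eigenvalues of $\mathcal{I}_r|_{\Omega^\flat_r}$ that touch zero at $r = r_0$ cross transversally and in the same direction as $r$ increases, so that each kernel dimension contributes exactly $+1$ to the jump of $i$. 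The monotonicity of $i$ already forces the crossings to be in the correct direction, but verifying transversality (equivalently, the nondegeneracy of the crossing form) is where Hermann's detailed ODE estimates in \cite{Her68} do the real work, and is what I would reproduce in full.
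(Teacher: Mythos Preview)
The paper does not supply its own proof of this theorem: it is quoted verbatim as Hermann's result from \cite[\S31]{Her68} and used as a black box to derive Proposition~\ref{prop:MorseIndex}. So there is no in-paper argument to compare against; the paper's contribution is the reduction of the geometric index form $\tilde{\bf I}^\gamma_{P,q}$ on $PC^2_{P\times q}(\gamma^\ast TM)_0$ to Hermann's abstract $\mathcal{I}$ via the parallel frame (equations (\ref{e:secondDifff3})--(\ref{e:two-forms}) and Proposition~\ref{prop:two-Jacobi}), after which Theorem~\ref{th:Hermann} is simply invoked.

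Your sketch is the standard Morse--Milnor reduction and is essentially the route Hermann himself takes in \cite{Her68}; indeed, the paper separately reproduces pieces of this machinery for its own purposes (the broken-Jacobi decomposition in Proposition~\ref{prop:Sak3.1}, the disconjugacy partition in Lemma~\ref{lem:new-add}, and the index-lemma positivity in Lemma~\ref{lem:31.9}). One technical point worth tightening: when you ``fix the partition so that the $s_j$ avoid focal points'' and then let $r$ vary, you need the partition nodes to work uniformly for all $r$ in a neighborhood of a given focal time $r_0$. The usual fix is to keep $s_0,\dots,s_{N-1}$ fixed in $(0,r_0)$ and let only the last node be $r$ itself, so that $\Omega^\flat_r$ is parametrized by the values at $s_1,\dots,s_{N-1}$ (a space independent of $r$) and $\mathcal{I}_r$ becomes a genuinely continuous family of forms on a fixed finite-dimensional space. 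With that in place your crossing analysis goes through; as you note, the monotonicity of $i(r)$ already forces the sign of each crossing, and transversality follows from the first-variation computation $\frac{d}{dr}\mathcal{I}_r(u_r,u_r)\big|_{r=r_0} = -\langle \dot u(r_0),\dot u(r_0)\rangle < 0$ for a nontrivial kernel element $u$ (since $u(r_0)=0$ forces $\dot u(r_0)\neq 0$).
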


\begin{corollary}[\hbox{\cite[Corollary of Lemma~31.8]{Her68}}]\label{cor:Hermann}
Under the assumptions of Theorem~\ref{th:Hermann} the interval $[0,\tau]$
contains no focal points if and only if $\mathcal{I}(u)>0$ for all curves $u\in\Omega([0,\tau], E)\setminus\{0\}$.
\end{corollary}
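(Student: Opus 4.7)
The plan is to derive the corollary as a direct consequence of Theorem~\ref{th:Hermann}, supplemented by a standard integration-by-parts identification of the kernel of the quadratic form $\mathcal{I}$ on $\Omega([0,\tau],E)$ with focal points at the right endpoint $t=\tau$.

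For the direction $(\Leftarrow)$, suppose $\mathcal{I}(u)>0$ for every nonzero $u\in\Omega([0,\tau],E)$. Theorem~\ref{th:Hermann} asserts that the Morse index of $\mathcal{I}$ equals the sum of multiplicities of focal points in the open interval $(0,\tau)$, so positivity forces this sum to vanish, ruling out any focal point in $(0,\tau)$. It remains to exclude a focal point at $t=\tau$. If such a point existed, there would be a nontrivial $C^2$ curve $u:[0,\tau]\to E$ with $\mathcal{J}(u)=0$, $u(0)\in E_0$, $\langle\dot u(0),w\rangle=-\mathcal{Q}(u(0),w)$ for every $w\in E_0$, and $u(\tau)=0$; in particular $u\in\Omega([0,\tau],E)\setminus\{0\}$. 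A direct integration by parts pairing $\mathcal{J}(u)=0$ with $u$, together with the two boundary conditions (using $w=u(0)\in E_0$), yields $\mathcal{I}(u)=0$, contradicting strict positivity.

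For the converse $(\Rightarrow)$, I would assume $[0,\tau]$ contains no focal points and show that $\mathcal{I}$ has both vanishing Morse index and trivial kernel on $\Omega([0,\tau],E)$. The index statement is immediate from Theorem~\ref{th:Hermann}. For the kernel, the associated symmetric bilinear form is
\[
\mathcal{I}(v,w)=-\mathcal{Q}(v(0),w(0))+\int_0^\tau\bigl[\langle\dot v(t),\dot w(t)\rangle-\langle R_t v(t),w(t)\rangle\bigr]\,dt.
\]
If $v$ lies in the kernel, testing against $w$ supported in each smooth piece $(t_i,t_{i+1})$ of $v$ yields $\mathcal{J}(v)=0$ classically there; testing against $w$ localized at an interior corner of $v$ forces $\dot v$ to be continuous across it, so $v$ is a global $C^2$ solution of $\mathcal{J}(v)=0$ on $[0,\tau]$. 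The boundary term at $t=\tau$ in the first variation vanishes because $w(\tau)=0$, while the contribution at $t=0$ cancels against $-\mathcal{Q}(v(0),w(0))$ precisely when $\langle\dot v(0),w(0)\rangle=-\mathcal{Q}(v(0),w(0))$ for every $w(0)\in E_0$, recovering the missing endpoint condition for $v$. Thus $v$ satisfies every requirement to produce a focal point at $t=\tau$, so by hypothesis $v\equiv 0$.

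The main obstacle I would expect is the regularity step inside the kernel computation: verifying that a piecewise $C^2$ element annihilated by the bilinear form is actually a classical $C^2$ solution of $\mathcal{J}$ on all of $[0,\tau]$, and that the natural boundary condition at $t=0$ is forced by the variational identity rather than being assumed. Once this is in hand the corollary is a mechanical combination of Theorem~\ref{th:Hermann} with the endpoint analysis sketched above.
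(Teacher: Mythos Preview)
The paper does not give its own proof of Corollary~\ref{cor:Hermann}; it is quoted verbatim as a known result from Hermann's book \cite[Corollary of Lemma~31.8]{Her68}, so there is no in-paper argument to compare against. Your derivation from Theorem~\ref{th:Hermann} is the natural one and is essentially correct.

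One small point in your $(\Rightarrow)$ direction deserves correction. You write that the contribution at $t=0$ ``cancels against $-\mathcal{Q}(v(0),w(0))$ precisely when $\langle\dot v(0),w(0)\rangle=-\mathcal{Q}(v(0),w(0))$ for every $w(0)\in E_0$, recovering the missing endpoint condition for $v$.'' But this endpoint condition is not missing: by the definition of $\Omega([0,\tau],E)$ in Theorem~\ref{th:Hermann}, every element $v$ of that space already satisfies $v(0)\in E_0$ and $\langle\dot v(0),w\rangle=-\mathcal{Q}(v(0),w)$ for all $w\in E_0$. So once your regularity argument shows that a kernel element $v$ is a global $C^2$ solution of $\mathcal{J}(v)=0$, the membership $v\in\Omega([0,\tau],E)$ immediately supplies all boundary data required to exhibit $\tau$ as a focal point, and the hypothesis forces $v=0$. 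The variational identity at $t=0$ is then an automatic cancellation, not a condition to be extracted. With this simplification your proof goes through.
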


\begin{lemma}[\hbox{\cite[Lemmas~31.5 and 31.6]{Her68}}]\label{lem:31.5-6}
Let the assumptions of Theorem~\ref{th:Hermann} be satisfied. If $\varepsilon\in (0,\tau]$ is sufficiently small,
there are no focal points on the interval $[0,\varepsilon]$ for $\mathcal{J}$
and  $(E_0, \mathcal{Q})$. Moreover,
if $t_0\in (0,\tau]$ is a focal point and $\varepsilon>0$ is small enough, then
the interval $[t_0-\varepsilon, t_0+\varepsilon]\cap[0,\tau]$ contains no other focal point.
Consequently, $[0,\tau]$ contains only a finite number of focal points for $\mathcal{J}$
and  $(E_0, \mathcal{Q})$.
 \end{lemma}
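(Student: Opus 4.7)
The plan is to establish the three assertions in sequence by combining Corollary~\ref{cor:Hermann} with an analysis of the matrix-valued Jacobi endomorphism parametrizing solutions of $\mathcal{J}(u)=0$ under the left boundary data $(E_0,\mathcal{Q})$.

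For the first assertion, I would invoke Corollary~\ref{cor:Hermann}: it suffices to check that the restriction of $\mathcal{I}$ to $\Omega([0,\varepsilon],E)\setminus\{0\}$ is strictly positive when $\varepsilon>0$ is small. Any $u\in\Omega([0,\varepsilon],E)$ satisfies $u(0)=-\int_0^\varepsilon\dot{u}\,dt$, so Cauchy--Schwarz yields $|u(0)|^2\le\varepsilon\int_0^\varepsilon|\dot{u}|^2\,dt$ and, integrating once more, $\int_0^\varepsilon|u|^2\,dt\le\varepsilon^2\int_0^\varepsilon|\dot{u}|^2\,dt$. Setting $C=\sup\{|\mathcal{Q}(v,v)|:v\in E_0,\ |v|=1\}$ and $M=\sup_{t\in[0,\tau]}\|R_t\|$, one obtains $\mathcal{I}(u)\ge(1-C\varepsilon-M\varepsilon^2)\int_0^\varepsilon|\dot{u}|^2\,dt$, which is positive for $\varepsilon$ small. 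For nonzero such $u$, $\int_0^\varepsilon|\dot{u}|^2\,dt>0$: otherwise $u$ would be constant and $u(\varepsilon)=0$ would force $u\equiv 0$.

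For isolation at an interior focal point $t_0$, I would introduce the $C^2$ endomorphism $A(t):E\to E$ parametrizing solutions of $\mathcal{J}(u)=0$ that satisfy the left boundary condition: with $\mathcal{Q}^\sharp\in\mathscr{L}_s(E_0)$ representing $\mathcal{Q}$ via $\langle\mathcal{Q}^\sharp v,w\rangle=\mathcal{Q}(v,w)$, let $A(t)(v,w):=u_{(v,w)}(t)$ for $(v,w)\in E_0\oplus E_0^\perp=E$, where $u_{(v,w)}$ solves $\mathcal{J}(u)=0$ with $u(0)=v$ and $\dot{u}(0)=-\mathcal{Q}^\sharp v+w$. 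The focal points in $(0,\tau]$ are exactly the zeros of $\det A(t)$. A direct computation in the block form dictated by $E_0\oplus E_0^\perp$ shows that the Wronskian $W(t):=A(t)^\ast A'(t)-A'(t)^\ast A(t)$ vanishes at $t=0$; differentiating and using $A''+R_tA=0$ together with the self-adjointness of $R_t$ gives $W'\equiv 0$, so $A^\ast A'$ is self-adjoint for all $t$. Let $V_0=\ker A(t_0)$, of dimension $k\ge 1$. Uniqueness for second-order linear ODE makes $A'(t_0)|_{V_0}$ injective, and the symmetric Wronskian forces $A'(t_0)(V_0)\perp\mathrm{Im}\,A(t_0)$; a dimension count then gives $A'(t_0)(V_0)=\mathrm{Im}\,A(t_0)^\perp$. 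Letting $P_0$ denote orthogonal projection onto $\mathrm{Im}\,A(t_0)^\perp$, the $C^1$ map $\phi(t)=P_0A(t)|_{V_0}:V_0\to\mathrm{Im}\,A(t_0)^\perp$ satisfies $\phi(t_0)=0$ while $\phi'(t_0)=P_0A'(t_0)|_{V_0}$ is an isomorphism, so $\phi(t)$ is injective for $0<|t-t_0|<\varepsilon$ with $\varepsilon$ small. Combined with $A(t)|_{V_0^\perp}:V_0^\perp\to A(t)(V_0^\perp)$ being a bijection for $t$ near $t_0$, a short kernel-splitting argument yields $\ker A(t)=\{0\}$ throughout $0<|t-t_0|<\varepsilon$, so $t_0$ is isolated among focal points.

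Finally, finiteness follows from compactness: every $t\in[0,\tau]$ admits a neighborhood meeting at most one focal point (by the previous two claims), and a finite subcover of $[0,\tau]$ then meets only finitely many focal points. The main obstacle is the isolation step: one has to set up the parametrization so that the Wronskian identity $A^\ast A'=A'^\ast A$ holds, and then carry out the rank analysis showing that $\dim\ker A(t)$ drops strictly below $k$ in a punctured neighborhood of $t_0$. Both are technical rather than deep, but easy to botch if the block forms of $A(0)$, $A'(0)$ in the splitting $E_0\oplus E_0^\perp$ or the sign of $\mathcal{Q}^\sharp$ are not handled carefully.
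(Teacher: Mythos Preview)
The paper does not supply its own proof of this lemma; it is quoted verbatim from Hermann \cite[Lemmas~31.5 and 31.6]{Her68} and used as a black box. Your argument is essentially correct and is a standard direct route to the result.

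Two small points deserve care. First, you invoke Corollary~\ref{cor:Hermann} for the first assertion, but in Hermann's numbering that corollary (of Lemma~31.8) comes after Lemmas~31.5--31.6, so one should guard against circularity. In fact you only need the elementary direction (strict positivity of $\mathcal{I}$ on $\Omega([0,\varepsilon],E)\setminus\{0\}$ implies no focal point in $(0,\varepsilon]$), which follows at once by extending a putative focal solution by zero past its vanishing point and integrating by parts to get $\mathcal{I}=0$; so there is no circularity. Second, the ``short kernel-splitting argument'' in the isolation step is the only place that is underspecified: injectivity of $\phi(t)=P_0A(t)|_{V_0}$ and of $A(t)|_{V_0^\perp}$ separately do \emph{not} by themselves force $\ker A(t)=\{0\}$, because $A(t)(V_0)$ and $A(t)(V_0^\perp)$ could still intersect nontrivially. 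The clean fix, using exactly the facts you have established, is to set $\tilde A(t)|_{V_0^\perp}=A(t)|_{V_0^\perp}$ and $\tilde A(t)|_{V_0}=(t-t_0)^{-1}A(t)|_{V_0}$ for $t\ne t_0$, extended continuously by $\tilde A(t_0)|_{V_0}=A'(t_0)|_{V_0}$. Your Wronskian analysis gives $A'(t_0)(V_0)=\mathrm{Im}\,A(t_0)^\perp$, which makes $\tilde A(t_0)$ invertible; hence $\tilde A(t)$ is invertible for $t$ near $t_0$, and $\det A(t)=(t-t_0)^k\det\tilde A(t)\ne 0$ for $0<|t-t_0|$ small. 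With that adjustment the proof is complete.
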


\begin{definition}[\hbox{\cite[page~411, Definition]{Her68}}]\label{def:Hermann}
{\rm (Let $R_t$ be as in Theorem~\ref{th:Hermann}.) Reals $a<b$ in $[0,\tau]$ are said to be mutually conjugate if there is a $C^2$
curve $v(t)$, not identically zero, satisfying $\ddot{v}(t)+ R_t(v(t))=0\;\forall t\in [a,b]$
and $v(a)=v(b)=0$.}
\end{definition}

Clearly, a point $a\in(0,\tau]$ is a focal point for the operator $\mathcal{J}$
and boundary condition $(E_0, \mathcal{Q})=(\{0\}, 0)$ if and only if $0$ and $a$  are mutually conjugate.
If $t_0\in (0,\tau]$ and $0$ are mutually conjugate, then Lemma~\ref{lem:31.5-6} implies that
for a sufficiently small $\varepsilon>0$, $0$ and each point $t\ne t_0$ in the interval $[t_0-\varepsilon, t_0+\varepsilon]\cap[0,\tau]$
 are not mutually conjugate. For an interval $I\subset [0,\tau]$,
 if no nontrivial solution $u\in C^2(I,E)$ of $\mathcal{J}u=0$ vanishes twice in $I$, we say $\mathcal{J}$ to be
\textsf{disconjugate} on $I$.
 Let $0\le a<b\le\tau$. According to (IV) in \cite[page 77]{HaWin} by Hartman and Wintner,
if there exists a continuous differentiable map $[a,b]\ni t\mapsto V(t)\in\mathscr{L}_s(E)$
 such that $-V'(t) + \frac{1}{4}(V(t))^2 + R_t\le 0$ for all $t\in [a,b]$,
  (hereafter for $P\in\mathscr{L}_s(E)$ we write $P\le 0$ if $\langle Pu,u\rangle\le 0$ for all $u\in E$), then
$\mathcal{J}$ is disconjugate on $[a,b]$. Given $t_0\in [0,\tau]$ let $V(t)=(t-t_0)C{\rm id}_E$, where $C>0$.
Then
$$
-V'(t) + \frac{1}{4}(V(t))^2 + R_t=-C{\rm id}_E+ \frac{1}{4}(t-t_0)^2C^2{\rm id}_E+ R_t.
$$
 Since $t\mapsto R_t$ is continuous, we may choose a large $C>0$ and a small $\varepsilon>0$ such that
 $$
 -C{\rm id}_E+ \frac{1}{4}(t-t_0)^2C^2{\rm id}_E+ R_t\le 0,\quad\forall t\in I:=[0,\tau]\cap[t_0-\varepsilon,t_0+\varepsilon].
 $$
  It follows that each point in $[0,\tau]$ has a neighborhood such that $\mathcal{J}$ is disconjugate on this neighborhood.
 This and Lemma~\ref{lem:31.5-6} imply

 \begin{lemma}\label{lem:new-add}
Under the assumptions of Theorem~\ref{th:Hermann}, there is a partition $0=t_0<\cdots<t_m=\tau$ such that
there are no focal points on the interval $[0, t_1]$ for $\mathcal{J}$
and  $(E_0, \mathcal{Q})$, and that $\mathcal{J}$ is disconjugate on each interval $[t_i, t_{i+1}]$, $i=0,1,\cdots, m-1$.
 \end{lemma}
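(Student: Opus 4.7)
The plan is to combine the two ingredients already assembled in the paragraph immediately preceding the statement: (a) Lemma~\ref{lem:31.5-6} provides an $\varepsilon_0\in(0,\tau]$ such that $[0,\varepsilon_0]$ contains no focal points of $\mathcal{J}$ for the boundary condition $(E_0,\mathcal{Q})$; and (b) the Hartman--Wintner criterion, applied with the test matrix $V(t)=(t-s)C\,\mathrm{id}_E$ for suitably chosen $C$ and $\varepsilon$ as shown above, produces for every $s\in[0,\tau]$ an open neighborhood $I_s$ of $s$ in $[0,\tau]$ on which $\mathcal{J}$ is disconjugate.

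First I would invoke compactness of $[0,\tau]$: the open cover $\{I_s:s\in[0,\tau]\}$ admits a Lebesgue number $\delta>0$, so that every closed subinterval of $[0,\tau]$ of length at most $\delta$ is contained in some $I_s$ and is therefore an interval of disconjugacy for $\mathcal{J}$.

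Next I would fix any partition $0=t_0<t_1<\cdots<t_m=\tau$ with mesh $\max_{0\le i\le m-1}(t_{i+1}-t_i)\le \min\{\delta,\varepsilon_0\}$, for instance a uniform partition with $m$ large enough. Each piece $[t_i,t_{i+1}]$ has length at most $\delta$, so $\mathcal{J}$ is disconjugate on it; and $t_1\le\varepsilon_0$ forces $[0,t_1]\subset[0,\varepsilon_0]$, which by Lemma~\ref{lem:31.5-6} contains no focal points. Both required properties of the partition are thus secured simultaneously.

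The argument is essentially routine once (a) and (b) are in hand; the only real conceptual step is the Lebesgue-number reduction, which is standard for an open cover of a compact interval. I do not foresee a substantive obstacle: the pointwise local disconjugacy already established (by the explicit choice of $V$) promotes to uniform disconjugacy on sufficiently fine partitions by compactness alone, and the initial focal-point-free interval is handled by shrinking $t_1$ below $\varepsilon_0$.
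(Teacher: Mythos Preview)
Your proposal is correct and follows essentially the same approach as the paper: the paper establishes the local disconjugacy neighborhoods via the Hartman--Wintner criterion and then simply states that this together with Lemma~\ref{lem:31.5-6} implies Lemma~\ref{lem:new-add}, leaving the routine compactness/Lebesgue-number extraction implicit. You have spelled out exactly that extraction.
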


\begin{lemma}[\hbox{\cite[Lemma~31.9]{Her68}}]\label{lem:31.9}
Under the assumptions of Theorem~\ref{th:Hermann}, let reals $a<b$ in $[0,\tau]$ be
such that $[a,b]$ contains no pair of mutually conjugate points.
Suppose that $u(t)$ and $v(t)$, $a\le t\le b$, are continuous curves
such that $u$ is piecewise $C^2$ and $v$ is $C^2$. Moreover, $v$ satisfies
$$
\ddot{v}(t)+ R_t(v(t))=0\;\forall t\in [a,b],\quad u(a)=v(a),\quad u(b)=v(b).
$$
Then
$$
\int^b_a\left[\langle \dot{v}(t),\dot{v}(t)\rangle-\langle{R}_t(v(t)), v(t)\rangle\right] dt\le
\int^b_a\left[\langle \dot{u}(t),\dot{u}(t)\rangle-\langle{R}_t(u(t)), u(t)\rangle\right] dt.
$$
Equality holds only if $u(t)=v(t)$ for all $t\in [a,b]$.
 \end{lemma}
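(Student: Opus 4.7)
The plan is to reduce the inequality to an index-positivity statement for the ``deviation'' curve $w := u - v$. Introduce the bilinear symmetric form
\begin{equation*}
I(x,y) := \int_a^b \bigl[\langle \dot x(t), \dot y(t)\rangle - \langle R_t x(t), y(t)\rangle\bigr]\, dt
\end{equation*}
on continuous, piecewise $C^2$ curves on $[a,b]$. Since $w(a) = w(b) = 0$ by the boundary matching of $u$ and $v$, bilinearity gives $I(u,u) - I(v,v) = 2 I(v,w) + I(w,w)$, so the desired inequality reduces to two claims: the cross term $I(v,w)$ vanishes, and $I(w,w) \ge 0$ with equality only if $w \equiv 0$.

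For the cross term I would perform integration by parts piece by piece. Choose a partition $a = s_0 < s_1 < \cdots < s_N = b$ on which $w$ is $C^2$. Since $v$ is $C^2$ on all of $[a,b]$, on each $[s_{i-1}, s_i]$,
\begin{equation*}
\int_{s_{i-1}}^{s_i} \langle \dot v, \dot w\rangle\, dt = \bigl\langle \dot v(s_i), w(s_i)\bigr\rangle - \bigl\langle \dot v(s_{i-1}), w(s_{i-1})\bigr\rangle - \int_{s_{i-1}}^{s_i} \langle \ddot v, w\rangle\, dt.
\end{equation*}
Summing and using continuity of both $\dot v$ and $w$ across the breakpoints, the interior boundary terms telescope to $\langle \dot v(b), w(b)\rangle - \langle \dot v(a), w(a)\rangle = 0$. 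Therefore $I(v,w) = -\int_a^b \langle \ddot v + R_t v, w\rangle\, dt$, which vanishes because $\mathcal{J} v \equiv 0$.

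For the positivity of $I(w,w)$ on curves vanishing at both endpoints, I would invoke Corollary~\ref{cor:Hermann} applied to the interval $[a,b]$ (after the trivial time-translation sending it to $[0, b-a]$) with the boundary data $E_0 = \{0\}$ and $\mathcal{Q} \equiv 0$. In this setup the admissible space $\Omega$ is exactly the set of continuous, piecewise $C^2$ curves on $[a,b]$ vanishing at both endpoints, the Hermann functional $\mathcal{I}$ reduces to $I(\cdot,\cdot)$, and a focal point corresponds, by Definition~\ref{def:Hermann}, to a point in $(a,b]$ mutually conjugate to $a$. The hypothesis that $[a,b]$ contains no pair of mutually conjugate points rules out every such focal point, so the corollary gives $I(w,w) > 0$ for any nonzero $w \in \Omega$, and $I(w,w) = 0$ forces $w \equiv 0$. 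Combining with $I(v,w) = 0$ yields the inequality and the equality statement.

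The only delicate point is ensuring that the piecewise integration by parts is carried out cleanly in the presence of only $C^0$ regularity of $\dot w$ across breakpoints; this is routine because $v$ itself is everywhere $C^2$, so the ``bad'' derivative appears only in the part that is integrated (not differentiated). I do not foresee a serious obstacle, as the reduction to Corollary~\ref{cor:Hermann} then closes the argument immediately.
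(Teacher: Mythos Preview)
Your proposal is correct and is precisely the standard argument for this index lemma. Note, however, that the paper does not supply its own proof of this statement: it is quoted directly from \cite[Lemma~31.9]{Her68} and used as a black box, so there is no in-paper proof to compare against. Your reduction to Corollary~\ref{cor:Hermann} with $E_0=\{0\}$, $\mathcal{Q}=0$ is exactly how Hermann himself derives it, and the integration-by-parts step is handled correctly.
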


Since $\tilde{\bf I}^\gamma_{P,q}$ is  the restriction of ${\bf I}^\gamma_{P,q}$ to
$PC^{2}_{P\times q}(\gamma^\ast TM)$, by (\ref{e:secondDiff})
\begin{eqnarray}\label{e:secondDifff2}
\tilde{\bf I}^\gamma_{P,q}(V, W)&=&
\int_0^\tau \left(g_{\dot\gamma}(R_{\dot\gamma}(\dot\gamma,V)\dot\gamma, W)+g_{\dot\gamma}(D_{\dot\gamma}^{\dot\gamma}V,D_{\dot\gamma}^{\dot\gamma}W)\right)dt
\nonumber\\
&&+g_{\dot\gamma(0)}(\tilde{S}^P_{\dot\gamma(0)}(V(0)),W(0)).
 \end{eqnarray}
 Note that  $\tilde{\bf I}^\gamma_{P,q}$ has still the kernel as in (\ref{e:P-JacobiField1}),
that is, ${\rm Ker}(\tilde{\bf I}^\gamma_{P,q})=\mathscr{J}^P_\gamma(\tau)$.

Because of the splitting (\ref{e:splitting}) with $p=\gamma(0)$ and $v=\dot\gamma(0)$,
we may use the positive definiteness of $g_{\dot\gamma(0)}$ to construct a normalized  orthonormal basis
for $(T_{\gamma(0)}M, g_{\dot\gamma(0)})$,
$e_1,\cdots, e_n$, that is, a basis satisfying $g_{\dot\gamma(0)}(e_i,e_j)=\delta_{ij}$,
 where  $\delta_{ij}$ is the Kronecker's delta for $i,j=1,\cdots,n$.
Clearly, we can also require  that $e_1,\cdots,e_k$ is  a normalized orthonormal basis for
 $(T_{\gamma(0)}P, g_{\dot\gamma(0)}|_{T_{\gamma(0)}P\times T_{\gamma(0)}P})$
and that $e_{k+1},\cdots,e_n$ be  a normalized orthonormal basis for $(T_{\gamma(0)}P)^\perp_{\dot\gamma(0)}$ and $g^\bot_{\dot\gamma(0)}$.
Since $\gamma$ is $C^4$ and $\Gamma^m_{ij}$ is $C^{3}$,
as in \cite[page~74]{Jav15} we may construct
 a $C^4$ parallel orthonormal frame  fields $E_1,\cdots,E_n$ along $\gamma$
starting at $e_1,\cdots,e_n$. (Therefore $g_{\dot\gamma(t)}(E_i(t),E_j(t))=\delta_{ij}$
because $t\mapsto g_{\dot\gamma(t)}(E_i(t),E_j(t))$ is constant.)
 Write $V=\sum_iv^iE_i$ and $W=\sum_iw^iE_i$, and denote by
 $\langle\cdot,\cdot\rangle_{\mathbb{R}^n}$ the standard Euclidean metric.
Then
$$
D_{\dot\gamma}^{\dot\gamma}V(t)=\sum_i({v}^i)'(t)E_i,\quad
 D_{\dot\gamma}^{\dot\gamma}W(t)=\sum_i({w}^i)'(t)E_i
 $$
 and therefore
 (\ref{e:secondDifff2}) becomes into
\begin{eqnarray} \label{e:secondDifff3}
\tilde{\bf I}^\gamma_{P,q}(V, W)&=&\int^\tau_0
\langle\dot{v},\dot{w}\rangle_{\R^n}dt-\int_0^\tau \langle\mathfrak{R}_tv(t), w(t)\rangle_{\R^n}dt
\nonumber\\
&&+g_{\dot\gamma(0)}(\tilde{S}^P_{\dot\gamma(0)}(V(0)),W(0)),
 \end{eqnarray}
 where $v(t)=(v^1(t),\cdots,v^n(t))$, $w(t)=(w^1(t),\cdots,w^n(t))$ and  $\mathfrak{R}_t:\R^n\to\R^n$ is given by
 $$
 \mathfrak{R}_tx=\left(\sum_i\mathfrak{R}_t^{i1}x^i,\cdots,
    \sum_i\mathfrak{R}_t^{in}x^i\right)
 $$
 with $\mathfrak{R}_t^{ij}=-g_{\dot\gamma}(R_{\dot\gamma}(\dot\gamma,E_i)\dot\gamma, E_j)(t)$.
Clearly, $t\mapsto \mathfrak{R}_t^{ij}$ is $C^2$ because $R^i_{jkl}$, $\dot\gamma(t)$ and $E_i(t)$ are $C^{2}$, $C^3$ and $C^4$,
respectively.
By (\ref{e:Rsym7}), the matrix $(\mathfrak{R}_t^{ij})$ is also symmetric, i.e.,
\begin{equation*}
g_{\dot\gamma(t)}(R_{\dot\gamma(t)}(\dot\gamma(t),E_i(t))\dot\gamma(t), E_j(t))=
g_{\dot\gamma(t)}(R_{\dot\gamma(t)}(\dot\gamma(t),E_j(t))\dot\gamma(t), E_i(t)),
\quad\forall i,j.
\end{equation*}
Moreover, since $W(0), V(0)$ and $\tilde{S}^P_{\dot\gamma(0)}(V(0))$ belong to $T_{\gamma(0)}P$, we deduce that
$v(0)$ and $w(0)$  belong to the subspace
 of $\R^n$,
 $\R^k\equiv\{(x^1,\cdots,x^n)\in\R^n\,|\, x^{k+1}=\cdots x^n=0\}$,
 and
 \begin{eqnarray*}
 \mathfrak{Q}({v}(0), {w}(0)):
 =-\sum^k_{i=1}\sum^k_{j=1}g_{\dot\gamma(0)}\left(\tilde{S}^P_{\dot\gamma(0)}(e_i),e_j\right)v^i(0)w^j(0)
 =-g_{\dot\gamma(0)}(\tilde{S}^P_{\dot\gamma(0)}(V(0)),W(0)).
   \end{eqnarray*}
   Clearly, $\mathfrak{Q}$ is a symmetric bilinear form on $\R^k$ and
  (\ref{e:secondDifff3}) becomes into
\begin{eqnarray}\label{e:secondDifff4}
\tilde{\bf I}^\gamma_{P,q}(V, W)=-\mathfrak{Q}({v}(0), {w}(0))+\int^\tau_0
\langle\dot{v},\dot{w}\rangle_{\R^n}dt-\int_0^\tau \langle\mathfrak{R}_tv(t), w(t)\rangle_{\R^n}dt.
 \end{eqnarray}
 For a $C^2$ vector field $J$ along $\gamma$, we can write $J(t)=\sum^n_{i=1}v^i(t)E_i(t)$.
 Then $J$ satisfies the Jacobi equation (\ref{e:JacobiEq}) if and only if
 $v(t)=(v^1(t),\cdots,v^n(t))$ satisfies
\begin{equation}\label{e:JacobiEq1}
\ddot{v}(t)+ \mathfrak{R}_tv(t)=0.
\end{equation}
 When $J$ is also $P$-Jacobi, the boundary condition (\ref{e:P-JacobiField}) becomes
\begin{equation}\label{e:P-JacobiField2}
v(0)\in \mathbb{R}^k\quad\hbox{and}\quad
\quad\langle \dot{v}(0),w\rangle= -\mathfrak{Q}({v}(0), {w})\;\forall w\in \mathbb{R}^k.
\end{equation}
Thus $J$ is a $P$-Jacobi field along $\gamma$ if and only if $v$ satisfies (\ref{e:JacobiEq1}) and (\ref{e:P-JacobiField2}).

In Theorem~\ref{th:Hermann} let us take
\begin{center}
$E=\mathbb{R}^n$, $\quad\langle\cdot,\cdot\rangle=\langle\cdot,\cdot\rangle_{\mathbb{R}^n}$,
$\quad E_0=\mathbb{R}^k$, $\quad\mathcal{Q}=\mathfrak{Q}\quad$ and $\quad\mathcal{J}=\frac{d^2}{dt^2}+\mathfrak{R}_t$
\end{center}
and therefore the corresponding index form
\begin{eqnarray}\label{e:two-forms0}
\mathcal{I}(v)=-\mathfrak{Q}(v(0),v(0))+\int^\tau_0\left[\langle \dot{v}(t),\dot{v}(t)\rangle_{\R^n}-\langle{\mathfrak{R}}_t(v(t)), v(t)\rangle_{\R^n}\right] dt.
\end{eqnarray}
Then (\ref{e:secondDifff4}) implies that
\begin{eqnarray}\label{e:two-forms}
\mathcal{I}(v)=\tilde{\bf I}^\gamma_{P,q}\left(\sum^n_{i=1}v^iE_i, \sum^n_{i=1}v^iE_i\right),\quad\forall v\in\Omega([0,\tau],\R^n).
\end{eqnarray}
The arguments in last paragraph show:

\begin{proposition}\label{prop:two-Jacobi}
 $a\in(0,\tau]$ is  a focal point for the operator $\mathcal{J}=\frac{d^2}{dt^2}+\mathfrak{R}_t$
and boundary condition $(E_0, \mathcal{Q})=(\R^k, \mathfrak{Q})$
if and only if $\gamma(a)$ is a $P$-focal point along $\gamma$; and the multiplicity of $a$
as a focal point for $\mathcal{J}$ and $(\R^k, \mathfrak{Q})$ is equal to that of $\gamma(a)$ as a $P$-focal point, $\mu^P(a)$.
Moreover
$$
\Omega([0,\tau],\R^n)\ni (v^1,\cdots,v^n)\to \sum^n_{i=1}v^iE_i\in PC^2(\gamma^\ast TM)_0
$$
is a linear isomorphism.
\end{proposition}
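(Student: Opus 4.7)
The plan is to assemble the observations made in the paragraph preceding the statement and verify that the Hermann framework $(\R^n,\R^k,\mathfrak{Q},\mathfrak{R}_t)$ is nothing more than the $P$-Jacobi framework along $\gamma$ read off in the parallel orthonormal frame $(E_1,\dots,E_n)$. Everything reduces to linear-algebra bookkeeping; I expect the proof to take only a few lines and to present no real obstacle.

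First I would verify the linear isomorphism. Because $E_1(t),\dots,E_n(t)$ is a basis of $T_{\gamma(t)}M$ at every $t$, every piecewise $C^2$ section $V$ of $\gamma^\ast TM$ has uniquely determined piecewise $C^2$ components $v^i$ with $V=\sum_i v^i E_i$, which yields a linear bijection between $PC^2(\gamma^\ast TM)$ and $PC^2([0,\tau],\R^n)$. I would then match the three defining conditions of $\Omega([0,\tau],\R^n)$ in Theorem~\ref{th:Hermann} with those of $PC^2_{P\times q}(\gamma^\ast TM)_0$ in (\ref{e:subspace}): the conditions $v(\tau)=0\iff V(\tau)=0$ and $v(0)\in\R^k\iff V(0)\in T_{\gamma(0)}P$ are immediate from the choice of basis, since $e_1,\dots,e_k$ is a basis of $T_{\gamma(0)}P$ and $e_{k+1},\dots,e_n$ is a basis of $(T_{\gamma(0)}P)^\perp_{\dot\gamma(0)}$. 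The one nontrivial matching is the equivalence of $\langle\dot v(0),w\rangle_{\R^n}=-\mathfrak{Q}(v(0),w)$ for every $w\in\R^k$ with $\mathrm{tan}^P_{\dot\gamma(0)}((D^{\dot\gamma}_{\dot\gamma}V)(0))=\tilde S^P_{\dot\gamma(0)}(V(0))$. To obtain this I would use parallelism of the $E_i$ to write $D^{\dot\gamma}_{\dot\gamma}V(0)=\sum_i\dot v^i(0)e_i$ with tangential part $\sum_{i=1}^k\dot v^i(0)e_i$, pair it with a test vector $W(0)=\sum_{j=1}^kw^je_j\in T_{\gamma(0)}P$ via the relation $g_{\dot\gamma(0)}(e_i,e_j)=\delta_{ij}$, compare the outcome with the defining formula of $\mathfrak{Q}$, and finally cancel the arbitrary $W(0)$ using positive-definiteness of $g_{\dot\gamma(0)}|_{T_{\gamma(0)}P\times T_{\gamma(0)}P}$.

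Next I would handle the focal-point statement. The calculation immediately preceding the proposition already shows that, in the frame $(E_i)$, the Jacobi equation (\ref{e:JacobiEq}) is equivalent to (\ref{e:JacobiEq1}), by parallelism of the $E_i$ together with the definition $\mathfrak{R}_t^{ij}=-g_{\dot\gamma}(R_{\dot\gamma}(\dot\gamma,E_i)\dot\gamma,E_j)$. Combining this ODE equivalence with the boundary correspondence at $t=0$ from the previous paragraph and the obvious $v(a)=0\iff J(a)=0$, I would conclude that the space of $P$-Jacobi fields $J$ along $\gamma|_{[0,a]}$ with $J(a)=0$ is linearly isomorphic, via $J\mapsto (v^1,\dots,v^n)$, to the space of $C^2$ curves $u:[0,a]\to\R^n$ satisfying the Hermann focal-point conditions of Theorem~\ref{th:Hermann}. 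This simultaneously yields the equivalence ``$a$ is a focal point for $\mathcal{J}$ and $(\R^k,\mathfrak{Q})$ iff $\gamma(a)$ is a $P$-focal point'' and the equality of Hermann multiplicity with $\mu^P(a)$, completing the proof.
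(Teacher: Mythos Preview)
Your proposal is correct and is exactly the approach the paper takes: immediately before the proposition the paper writes ``The arguments in the last paragraph show:'' and then simply states Proposition~\ref{prop:two-Jacobi}, so all the work is the bookkeeping in the parallel frame $(E_i)$ that you have spelled out. Your detailed verification of the three boundary conditions (in particular the matching of $\langle\dot v(0),w\rangle_{\R^n}=-\mathfrak{Q}(v(0),w)$ with $\mathrm{tan}^P_{\dot\gamma(0)}((D^{\dot\gamma}_{\dot\gamma}V)(0))=\tilde S^P_{\dot\gamma(0)}(V(0))$) makes explicit precisely what the paper leaves implicit; note also that the target space written as $PC^2(\gamma^\ast TM)_0$ in the statement is the space $PC^2_{P\times q}(\gamma^\ast TM)_0$ of (\ref{e:subspace}), as you correctly interpret it.
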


\begin{proof}[\bf Proofs of the conclusions (i)-(ii) and Proposition~\ref{prop:MorseIndex}]
Because of Proposition~\ref{prop:two-Jacobi},  the conclusions (i) and (ii) follow from Lemma~\ref{lem:31.5-6} and \ref{lem:new-add}, respectively.

By Theorem~\ref{th:Hermann} the index of the form $\mathcal{I}$ in (\ref{e:two-forms0}) is finite and equal to the sum of multiplicities of the
focal points contained in the open interval $(0,\tau)$. These and the first two claims in Proposition~\ref{prop:two-Jacobi} lead to
$$
{\rm Index}(\mathcal{I})=\sum\limits_{t_{0}\in(0,\tau)}\mu^{P}(t_{0}).
$$
From (\ref{e:two-forms}), the definition of index and the third claim in Proposition~\ref{prop:two-Jacobi}
it follows that ${\rm Index}(\mathcal{I})$ is equal to the index of
the restriction of $\tilde{\bf I}^\gamma_{P,q}$ to
$PC^2_{P\times q}(\gamma^\ast TM)_0$.
 The desired conclusion in Proposition~\ref{prop:MorseIndex} is obtained.
({\it Note}: Proposition~B in \S1.2 is not used above.)
\end{proof}

\noindent{\bf Step 2}.
Let $0=t_0<\cdots<t_m=\tau$ be a partition as in (ii).
 By Corollary~\ref{cor:Hermann} we have
\begin{eqnarray}\label{e:positive-focalA}
\tilde{\bf I}^{\gamma_0}_{P,\gamma(t_1)}(V, V)&=&
\int_0^{t_1}\left(g_{\dot\gamma}(R_{\dot\gamma}(\dot\gamma,V)\dot\gamma, V)+g_{\dot\gamma}(D_{\dot\gamma}^{\dot\gamma}V,D_{\dot\gamma}^{\dot\gamma}V)\right)dt
\nonumber\\
&&+g_{\dot\gamma(0)}(\tilde{S}^P_{\dot\gamma(0)}(V(0)), V(0))>0,\\
&&\quad\forall V\in PC^2_{P\times \gamma(t_1)}(\gamma_0^\ast TM)_0\setminus\{0\},\label{e:positive-focalB}
\end{eqnarray}
where $PC^2_{P\times \gamma(t_1)}(\gamma_0^\ast TM)_0$ is as in (\ref{e:subspace}).
Similarly, for each $i=1,\cdots,m-1$, Lemma~\ref{lem:31.9} implies
\begin{eqnarray}\label{e:positive-conjA}
&&\tilde{\bf I}^{\gamma_i}_{\gamma(t_i),\gamma(t_{i+1})}(V, V)=
\int_{t_i}^{t_{i+1}}\left(g_{\dot\gamma}(R_{\dot\gamma}(\dot\gamma,V)\dot\gamma, V)+g_{\dot\gamma}(D_{\dot\gamma}^{\dot\gamma}V,D_{\dot\gamma}^{\dot\gamma}V)\right)dt>0,\\
&&\quad\forall V\in PC^2(\gamma_i^\ast TM)\setminus\{0\}\;\hbox{satisfying}\;V(t_i)=0\;\hbox{and}\;V(t_{i+1})=0.
\label{e:positive-conjB}
\end{eqnarray}
Let
$PC^2_{P\times q}(\gamma^\ast TM)_{00}$
consist of $V\in PC^2_{P\times q}(\gamma^\ast TM)_0$ such that
$V|_{[t_0,t_1]}$ is a $P$-Jacobi field along $\gamma_0$ and
that $V|_{[t_i, t_{i+1}]}$ ($1\le i\le m-1$) are Jacobi fields.
It is a vector subspace of $PC^2_{P\times q}(\gamma^\ast TM)_0$ of finite dimension.
We shall prove:

\begin{proposition}\label{prop:Sak3.1}
Under the assumptions of Proposition~\ref{prop:MorseIndex} let $\dim P>0$.
Then the null space of $\tilde{\bf I}^\gamma_{P,q}|_{PC^2_{P\times q}(\gamma^\ast TM)_{00}}$
coincides with the null space of $\tilde{\bf I}^\gamma_{P,q}$, which is given by
$\{Y\in\mathscr{J}^P_\gamma\,|\, Y(\tau)=0\}$. Moreover,
the index of $\tilde{\bf I}^\gamma_{P,q}|_{PC^2_{P\times q}(\gamma^\ast TM)_{00}}$ is equal to
the index of $\tilde{\bf I}^\gamma_{P,q}$, and hence finite.
\end{proposition}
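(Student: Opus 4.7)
The strategy is to realize $PC^{2}_{P\times q}(\gamma^\ast TM)$ as a $\tilde{\bf I}^\gamma_{P,q}$-orthogonal direct sum
\begin{equation*}
PC^{2}_{P\times q}(\gamma^\ast TM)=PC^{2}_{P\times q}(\gamma^\ast TM)_{00}\oplus E',
\end{equation*}
where $E':=\{V\in PC^{2}_{P\times q}(\gamma^\ast TM)\mid V(t_i)=0,\;i=1,\dots,m-1\}$, with $\tilde{\bf I}^\gamma_{P,q}|_{E'}$ positive definite. Granted this, both the nullity and the (necessarily finite) index of $\tilde{\bf I}^\gamma_{P,q}$ will coincide with those of its restriction to the finite-dimensional $PC^{2}_{P\times q}(\gamma^\ast TM)_{00}$, and (\ref{e:P-JacobiField1}) identifies the common kernel with $\{Y\in\mathscr{J}^P_\gamma\mid Y(\tau)=0\}$.

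The workhorse is the integration-by-parts identity
\begin{equation*}
\tilde{\bf I}^\gamma_{P,q}(V,W)=\sum_{i=1}^{m-1}g_{\dot\gamma(t_i)}\bigl(D_{\dot\gamma}^{\dot\gamma}V(t_i^-)-D_{\dot\gamma}^{\dot\gamma}V(t_i^+),\,W(t_i)\bigr),
\end{equation*}
valid for any $V\in PC^{2}_{P\times q}(\gamma^\ast TM)_{00}$ and any $W\in PC^{2}_{P\times q}(\gamma^\ast TM)$. I derive it by integrating $g_{\dot\gamma}(D_{\dot\gamma}^{\dot\gamma}V,D_{\dot\gamma}^{\dot\gamma}W)$ by parts on each $[t_i,t_{i+1}]$ via (\ref{e:compat}) with $\xi=\dot\gamma$ (parallel along $\gamma$, so the Cartan-tensor term drops out) and invoking the Jacobi equation on each piece; the $t=0$ boundary contribution cancels because $\mathrm{tan}^P_{\dot\gamma(0)}(D_{\dot\gamma}^{\dot\gamma}V(0))=\tilde S^P_{\dot\gamma(0)}(V(0))$ and $W(0)\in T_{\gamma(0)}P$, while the $t=\tau$ contribution vanishes by $W(\tau)=0$. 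The evaluation $W\mapsto(W(t_1),\dots,W(t_{m-1}))$ is a linear isomorphism $PC^{2}_{P\times q}(\gamma^\ast TM)_{00}\to\prod_{i=1}^{m-1}T_{\gamma(t_i)}M$ thanks to the no-$P$-focal condition on $[0,t_1]$ and the non-conjugacy conditions on the middle intervals guaranteed by~(ii). This immediately gives the nullity statement (a kernel element of $\tilde{\bf I}^\gamma_{P,q}|_{PC^{2}_{00}}$ has all jumps equal to zero, hence is a smooth $P$-Jacobi field vanishing at $\tau$) and, via $V_1\in PC^{2}_{P\times q}(\gamma^\ast TM)_{00}$ defined by $V_1(t_i):=V(t_i)$ for $i=1,\dots,m-1$ and $V_2:=V-V_1\in E'$, yields the orthogonal decomposition with $\tilde{\bf I}^\gamma_{P,q}(V_1,V_2)=0$.

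The \emph{main obstacle} is positivity of $\tilde{\bf I}^\gamma_{P,q}|_{E'}$. Writing $\tilde{\bf I}^\gamma_{P,q}(V_2,V_2)=\tilde{\bf I}^{\gamma_0}_{P,\gamma(t_1)}(V_2|_{[0,t_1]},V_2|_{[0,t_1]})+\sum_{i\ge 1}\tilde{\bf I}^{\gamma_i}_{\gamma(t_i),\gamma(t_{i+1})}(V_2|_{[t_i,t_{i+1}]},V_2|_{[t_i,t_{i+1}]})$, the middle summands are strictly positive on nonzero inputs by (\ref{e:positive-conjA}). The first-interval term is the sticking point, since $V_2|_{[0,t_1]}$ need not obey the $P$-condition in (\ref{e:subspace}) and so (\ref{e:positive-focalA}) does not apply verbatim. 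To deal with it I further split $V_2|_{[0,t_1]}=J+W$, where $J$ is the plain Jacobi field with $J(0)=V_2(0)\in T_{\gamma(0)}P$ and $J(t_1)=0$ (unique by non-conjugacy of $0$ and $t_1$) and $W$ vanishes at both endpoints. A further integration by parts yields $\tilde{\bf I}^{\gamma_0}_{P,\gamma(t_1)}(J,W)=0$, while (\ref{e:positive-conjA}) gives $\tilde{\bf I}^{\gamma_0}_{P,\gamma(t_1)}(W,W)\ge 0$, reducing matters to positivity of the quadratic form $u\mapsto g_{\dot\gamma(0)}(B_{t_1}(u),u)$ on $T_{\gamma(0)}P$, where $B_t\colon T_{\gamma(0)}P\to T_{\gamma(0)}P$ is defined by $B_t(u):=\tilde S^P_{\dot\gamma(0)}(u)-\mathrm{tan}^P_{\dot\gamma(0)}(D_{\dot\gamma}^{\dot\gamma}J^{u,t}(0))$ with $J^{u,t}$ the Jacobi field determined by $J^{u,t}(0)=u$ and $J^{u,t}(t)=0$. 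The symmetries (\ref{e:Rsym7})--(\ref{e:Rsym8}) imply the Lagrangian-type identity $g_{\dot\gamma(0)}(D_{\dot\gamma}^{\dot\gamma}J_1(0),J_2(0))=g_{\dot\gamma(0)}(J_1(0),D_{\dot\gamma}^{\dot\gamma}J_2(0))$ for any two Jacobi fields vanishing simultaneously at $t$, whence (together with symmetry of $\tilde S^P_{\dot\gamma(0)}$) $B_t$ is $g_{\dot\gamma(0)}$-symmetric; moreover $\ker B_t\ne 0$ occurs precisely when $t$ is a $P$-focal point. Since $J^{u,t}(s)=u(1-s/t)+O(t)$ for small $t$, one has $B_t(u)\sim u/t$ as $t\to 0^+$, so $B_t$ is positive definite for small $t>0$. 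Continuity of its eigenvalues in $t$, combined with the hypothesis that $(0,t_1]$ contains no $P$-focal points (which forbids any eigenvalue from crossing zero), forces $B_{t_1}$ to remain positive definite. This yields positivity of $\tilde{\bf I}^\gamma_{P,q}|_{E'}$ and completes the argument.
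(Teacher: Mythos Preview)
Your overall architecture coincides with the paper's: the same direct-sum decomposition $PC^2_{P\times q}(\gamma^\ast TM)=PC^2_{P\times q}(\gamma^\ast TM)_{00}\oplus E'$ (the paper writes ${\bf T}$ for your $E'$), the same integration-by-parts identity giving $\tilde{\bf I}^\gamma_{P,q}$-orthogonality of the summands, and the same reduction of positivity on $E'$ to the first subinterval $[0,t_1]$.

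Where you genuinely diverge is in how you handle positivity of $\tilde{\bf I}^{\gamma_0}_{P,\gamma(t_1)}$ on vector fields with $V_2(0)\in T_{\gamma(0)}P$, $V_2(t_1)=0$. The paper proves this as a separate lemma (Lemma~\ref{lem:Sa2.9}) by writing any such field as $\sum_i f^iJ_i$ in terms of the $P$-Jacobi basis of Lemma~\ref{lem:base}, and then computing $\tilde{\bf I}^{\gamma_0}_{P,\gamma(t_1)}(X,X)=\int_0^{t_1}g_{\dot\gamma}\bigl(\sum_i(f^i)'J_i,\sum_j(f^j)'J_j\bigr)\,dt\ge 0$, which is explicit and also feeds into the index lemma (Proposition~\ref{prop:Sa2.10}). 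You instead split $V_2|_{[0,t_1]}=J+W$ with $J$ an ordinary Jacobi field matching the boundary data and $W$ vanishing at both ends; the cross term vanishes, $W$ is handled by non-conjugacy on $[0,t_1]$, and $J$ contributes the boundary quadratic form $u\mapsto g_{\dot\gamma(0)}(B_{t_1}u,u)$ on $T_{\gamma(0)}P$. Your positivity of $B_{t_1}$ via the asymptotics $tB_t\to{\rm id}$ as $t\to 0^+$, symmetry of $B_t$, the identification $\ker B_t\neq 0\Leftrightarrow t$ is $P$-focal, and eigenvalue continuity on $(0,t_1]$, is a correct Riccati/comparison-style argument. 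It avoids constructing the $P$-Jacobi basis and the representation $X=\sum f^iJ_i$, at the cost of the small-$t$ asymptotic estimate for $D^{\dot\gamma}_{\dot\gamma}J^{u,t}(0)$ (which you only sketch; it follows from the variation-of-constants formula for $\ddot v+\mathfrak{R}_t v=0$). One minor point: when you invoke ``(\ref{e:positive-conjA}) gives $\tilde{\bf I}^{\gamma_0}_{P,\gamma(t_1)}(W,W)\ge0$'', note that (\ref{e:positive-conjA}) is stated for $i\ge 1$; you are really using Lemma~\ref{lem:31.9} on $[0,t_1]$, which is legitimate since $[0,t_1]$ contains no mutually conjugate pair by~(ii).
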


We first admit it and give:

\begin{proof}[\bf Proof of (\ref{e:MS5})]
As remarked below Proposition~\ref{prop:MorseIndex} we may assume $\dim P>0$.
By the definition of the Morse index of a bilinear and symmetric form  on a vector space we directly deduce
$$
{\rm Index}\left(\tilde{\bf I}^\gamma_{P,q}|_{PC^2_{P\times q}(\gamma^\ast TM)_{00}}\right)\le
{\rm Index}\left(\tilde{\bf I}^\gamma_{P,q}|_{PC^2_{P\times q}(\gamma^\ast TM)_{0}}\right)\le
{\rm Index}\left(\tilde{\bf I}^\gamma_{P,q}\right)\le {\rm Index}\left({\bf I}^\gamma_{P,q}\right).
$$
The final inequality is actually an equality since $PC^2_{P\times q}(\gamma^\ast TM)$ is dense in $W^{1,2}_{P\times q}(\gamma^\ast TM)$.
(\ref{e:MS5}) and hence (\ref{e:MS1}) follows from these and Propositions~\ref{prop:MorseIndex} and \ref{prop:Sak3.1} immediately.
\end{proof}

\noindent{\bf Spep 3}(\textsf{Prove Proposition~\ref{prop:Sak3.1}}).
To this end we need some lemmas. By Remark~A, $L(\dot\gamma(t))\equiv L(\dot\gamma(0))$ is a positive constant.
We may define $\textsf{tan}_{\gamma}(X)$ and $\textsf{nor}_\gamma(X)$  as in (\ref{e:TangNormal}).
By the proofs of \cite[Lemmas~3.16,3.17(ii), Prop.3.18]{Jav15} we immediately obtain (i), (ii) and (iii)
of the following lemma, respectively.

\begin{lemma}\label{orthogJacobi}
Let $\gamma:[0,\tau]\to M$ be an  $L$-geodesic (and hence $C^4$) satisfying $L\circ\dot\gamma\ne 0$.
 Then:
\begin{enumerate}
\item[\rm (i)] $D_{\dot{\gamma}}^{\dot{\gamma}}\big(\textsf{tan}_{\gamma}(J)\big)=\textsf{tan}_{\gamma}(D_{\dot{\gamma}}^{\dot{\gamma}}J)$ and
$D_{\dot{\gamma}}^{\dot{\gamma}}\big(\textsf{nor}_\gamma(J)\big)=\textsf{nor}_\gamma(D_{\dot{\gamma}}^{\dot{\gamma}}J)$
for $J\in W^{1,2}(\gamma^\ast TM)$.
\item[\rm (ii)] A $C^2$ vector field $J$ along $\gamma$ is a Jacobi field if and only if $\textsf{nor}_\gamma(J)$ and $\textsf{tan}_{\gamma}(J)$ are Jacobi fields.
\item[\rm (iii)] If $J_1$ and $J_2$ are $C^2$ Jacobi fields along $\gamma$, then
 the function  $g_{\dot\gamma}(J_1,D_{\dot{\gamma}}^{\dot{\gamma}}J_2)-g_{\dot\gamma}(D_{\dot{\gamma}}^{\dot{\gamma}}J_1,J_2)$ is constant.
\end{enumerate}
\end{lemma}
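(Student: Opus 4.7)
The plan is to treat the three parts in order, leveraging the fact that $\gamma$ is an $L$-geodesic (so $D^{\dot\gamma}_{\dot\gamma}\dot\gamma\equiv 0$ and, by Remark~A, $L\circ\dot\gamma$ is a nonzero constant, so that the splitting $\gamma^\ast TM=\mathbb{R}\dot\gamma\oplus\dot\gamma^\bot$ is well-defined). The two main tools throughout will be the almost $g_{\dot\gamma}$-compatibility identity (\ref{e:compat}) and the curvature symmetries (\ref{e:Rsym7})--(\ref{e:Rsym8}).

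For (i), I will write $\textsf{tan}_\gamma(J)=f\dot\gamma$ with $f(t)=g_{\dot\gamma(t)}(J(t),\dot\gamma(t))/L(\dot\gamma(t))$. Applying (\ref{e:compat}) to the pair $(J,\dot\gamma)$ annihilates both the term $g_{\dot\gamma}(J,D^{\dot\gamma}_{\dot\gamma}\dot\gamma)$ and the Cartan correction $2C_{\dot\gamma}(D^{\dot\gamma}_{\dot\gamma}\dot\gamma,J,\dot\gamma)$, since $D^{\dot\gamma}_{\dot\gamma}\dot\gamma=0$, leaving $\tfrac{d}{dt}g_{\dot\gamma}(J,\dot\gamma)=g_{\dot\gamma}(D^{\dot\gamma}_{\dot\gamma}J,\dot\gamma)$. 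A short computation then gives $D^{\dot\gamma}_{\dot\gamma}(f\dot\gamma)=f'\dot\gamma=\textsf{tan}_\gamma(D^{\dot\gamma}_{\dot\gamma}J)$, and the analogous identity for $\textsf{nor}_\gamma$ follows from $\textsf{nor}_\gamma=\mathrm{id}-\textsf{tan}_\gamma$.

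For (ii), the crucial preliminary observation is that $R_{\dot\gamma}(\dot\gamma,\dot\gamma)\dot\gamma=0$, which follows from the antisymmetry $R^i_{jkl}=-R^i_{jlk}$ implicit in (\ref{e:ChernC}); combining this with (\ref{e:Rsym7}) gives $g_{\dot\gamma}(R_{\dot\gamma}(\dot\gamma,J)\dot\gamma,\dot\gamma)=0$, so $R_{\dot\gamma}(\dot\gamma,J)\dot\gamma$ lies in $\dot\gamma^\bot$ and $R_{\dot\gamma}(\dot\gamma,\textsf{tan}_\gamma(J))\dot\gamma=0$. Iterating (i) yields $D^{\dot\gamma}_{\dot\gamma}D^{\dot\gamma}_{\dot\gamma}\textsf{tan}_\gamma(J)=\textsf{tan}_\gamma(D^{\dot\gamma}_{\dot\gamma}D^{\dot\gamma}_{\dot\gamma}J)$ and the parallel identity for $\textsf{nor}_\gamma$, so the Jacobi equation splits cleanly along $\mathbb{R}\dot\gamma\oplus\dot\gamma^\bot$ into independent equations for the two components. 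The forward implication in (ii) follows, and the converse is immediate by linearity.

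For (iii), I will differentiate $g_{\dot\gamma}(J_1,D^{\dot\gamma}_{\dot\gamma}J_2)-g_{\dot\gamma}(D^{\dot\gamma}_{\dot\gamma}J_1,J_2)$ via (\ref{e:compat}) applied to each term; as in (i), all Cartan-tensor corrections drop because $D^{\dot\gamma}_{\dot\gamma}\dot\gamma=0$. Substituting the Jacobi equations for $J_1$ and $J_2$ reduces the derivative to $g_{\dot\gamma}(J_1,R_{\dot\gamma}(\dot\gamma,J_2)\dot\gamma)-g_{\dot\gamma}(R_{\dot\gamma}(\dot\gamma,J_1)\dot\gamma,J_2)$, which vanishes by the symmetry (\ref{e:Rsym7}). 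The only mildly delicate step is verifying in (ii) that the tangent/normal splitting is genuinely preserved by the Jacobi operator $D^{\dot\gamma}_{\dot\gamma}D^{\dot\gamma}_{\dot\gamma}-R_{\dot\gamma}(\dot\gamma,\cdot)\dot\gamma$; this rests jointly on part (i) (for the second-derivative piece) and on the orthogonality of $R_{\dot\gamma}(\dot\gamma,\cdot)\dot\gamma$ to $\dot\gamma$ (for the curvature piece), which is precisely where the Chern-curvature symmetries enter.
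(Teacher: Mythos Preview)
Your proposal is correct and is precisely the standard direct computation; the paper itself does not give a proof but simply cites the proofs of \cite[Lemmas~3.16, 3.17(ii), Prop.~3.18]{Jav15}, which proceed exactly along the lines you outline (using $D^{\dot\gamma}_{\dot\gamma}\dot\gamma=0$ in (\ref{e:compat}) for (i), the antisymmetry $R^i_{jkl}=-R^i_{jlk}$ together with (\ref{e:Rsym7}) for (ii), and (\ref{e:Rsym7}) again for (iii)).
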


\begin{lemma}\label{lem:BoundaryBase}
Under the assumptions of Lemma~\ref{orthogJacobi},
let $PC^2_{P\times q}(\gamma^\ast TM)_0$ be as in (\ref{e:subspace}).
Then
$$
g_{\dot\gamma}(D_{\dot{\gamma}}^{\dot{\gamma}}V(0), W(0))=g_{\dot\gamma}(V(0),D_{\dot{\gamma}}^{\dot{\gamma}}W(0)),\quad
\forall V,W\in PC^2_{P\times q}(\gamma^\ast TM)_0.
$$
Moreover, if $V, W\in \mathscr{J}^P_\gamma$, then $g_{\dot\gamma}(D_{\dot\gamma}^{\dot\gamma}V(t), W(t))=g_{\dot\gamma}(V(t), D_{\dot\gamma}^{\dot\gamma}W(t))$ for all $t\in [0,\tau]$.
\end{lemma}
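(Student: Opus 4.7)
The identity at $t=0$ is purely a statement about the boundary data, so the plan is to localize everything to the splitting $T_{\gamma(0)}M=T_{\gamma(0)}P\oplus (T_{\gamma(0)}P)^\perp_{\dot\gamma(0)}$ from (\ref{e:splitting}) (which is available because $g_{\dot\gamma(0)}|_{T_{\gamma(0)}P\times T_{\gamma(0)}P}$ is assumed nondegenerate). First I would note that for $V,W\in PC^2_{P\times q}(\gamma^\ast TM)_0$ one has $V(0),W(0)\in T_{\gamma(0)}P$; since $\operatorname{nor}^P_{\dot\gamma(0)}(D^{\dot\gamma}_{\dot\gamma}V(0))\in (T_{\gamma(0)}P)^\perp_{\dot\gamma(0)}$, it is $g_{\dot\gamma(0)}$-orthogonal to $W(0)$, and hence
\[
g_{\dot\gamma(0)}(D^{\dot\gamma}_{\dot\gamma}V(0),W(0))=g_{\dot\gamma(0)}\bigl(\operatorname{tan}^P_{\dot\gamma(0)}(D^{\dot\gamma}_{\dot\gamma}V(0)),W(0)\bigr)=g_{\dot\gamma(0)}\bigl(\tilde S^P_{\dot\gamma(0)}(V(0)),W(0)\bigr),
\]
where the last equality uses membership in $PC^2_{P\times q}(\gamma^\ast TM)_0$. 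Symmetrically the right-hand side equals $g_{\dot\gamma(0)}(V(0),\tilde S^P_{\dot\gamma(0)}(W(0)))$.

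Next I would reduce the whole identity to the $g_{\dot\gamma(0)}$-symmetry of the shape operator $\tilde S^P_{\dot\gamma(0)}$ on $T_{\gamma(0)}P$. This is exactly what the relation $g_v(S^P_v(u,w),v)=-g_v(\tilde S^P_v(u),w)$ from \S1.2 provides once it is combined with the symmetry $S^P_v(u,w)=S^P_v(w,u)$: swapping $u,w$ on both sides of that relation yields $g_v(\tilde S^P_v(u),w)=g_v(\tilde S^P_v(w),u)$, which together with the display above closes the first part of the lemma.

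For the ``moreover'' assertion, $P$-Jacobi fields in $\mathscr{J}^P_\gamma$ satisfy the defining condition (\ref{e:P-JacobiField}), which is precisely the boundary condition used in the previous paragraph; hence the identity already proved gives
\[
\bigl[g_{\dot\gamma}(D^{\dot\gamma}_{\dot\gamma}V,W)-g_{\dot\gamma}(V,D^{\dot\gamma}_{\dot\gamma}W)\bigr]_{t=0}=0.
\]
By Lemma~\ref{orthogJacobi}(iii) the function in brackets is constant on $[0,\tau]$ (this is where the hypothesis $L\circ\dot\gamma\ne 0$ of Lemma~\ref{orthogJacobi} is needed, but it holds here because $g_{\dot\gamma(0)}$ positive definite and $\dot\gamma(0)\in A$ imply $L(\dot\gamma(0))=g_{\dot\gamma(0)}(\dot\gamma(0),\dot\gamma(0))>0$, which is then preserved along $\gamma$). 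Hence the bracketed quantity vanishes identically in $t$, giving the desired equality on all of $[0,\tau]$.

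There is essentially no obstacle here beyond matching the subspace condition in (\ref{e:subspace}) with the boundary condition defining $\mathscr{J}^P_\gamma$; the only point that requires a moment's care is verifying that $\tilde S^P_{\dot\gamma(0)}$ is $g_{\dot\gamma(0)}$-symmetric, which is forced by the already-stated symmetry of the second fundamental form $S^P_v$. Everything else is bookkeeping with the $\operatorname{tan}^P$/$\operatorname{nor}^P$ decomposition and the constancy statement from Lemma~\ref{orthogJacobi}(iii).
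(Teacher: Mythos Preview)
Your proposal is correct and follows essentially the same route as the paper's proof: reduce the identity at $t=0$ to the $g_{\dot\gamma(0)}$-symmetry of $\tilde S^P_{\dot\gamma(0)}$ via the $\operatorname{tan}^P/\operatorname{nor}^P$ splitting, then propagate to all $t$ using Lemma~\ref{orthogJacobi}(iii). One small remark: your parenthetical justifying $L\circ\dot\gamma\ne 0$ via positive definiteness of $g_{\dot\gamma(0)}$ is superfluous here, since the lemma is stated ``under the assumptions of Lemma~\ref{orthogJacobi}'', which already includes $L\circ\dot\gamma\ne 0$ directly (positive definiteness is only imposed later, in Lemma~\ref{lem:base} and Proposition~\ref{prop:MorseIndex}).
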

\begin{proof}
Since $V(0), W(0)\in T_{\dot\gamma(0)}P$ and $\tilde{S}^P_{\dot\gamma(0)}$ is symmetric we have
\begin{eqnarray*}
g_{\dot\gamma}(D_{\dot{\gamma}}^{\dot{\gamma}}V(0), W(0))&=&
g_{\dot\gamma}({\rm tan}^P_{\dot\gamma}(D_{\dot{\gamma}}^{\dot{\gamma}}V(0)), W(0))\\
&=&g_{\dot\gamma}(\tilde{S}^P_{\dot\gamma(0)}(V(0)), W(0))\\
&=&g_{\dot\gamma}(V(0), \tilde{S}^P_{\dot\gamma(0)}(W(0))\\
&=&g_{\dot\gamma}(V(0), {\rm tan}^P_{\dot\gamma(0)}(D_{\dot{\gamma}}^{\dot{\gamma}}W(0)))
=g_{\dot\gamma}(V(0), D_{\dot{\gamma}}^{\dot{\gamma}}W(0)).
\end{eqnarray*}
The second claim follows from the first one and Lemma~\ref{orthogJacobi}(iii).
\end{proof}

 By Remark~A, $L\circ\dot\gamma(t)$ is a positive constant.  Note that
 $(\dot\gamma^\bot)_0=\{v\in T_{\gamma(0)}M\,|\, g_{\dot{\gamma}(0)}(\dot\gamma(0),v)=0\}\supseteq T_{\gamma(0)}P$.
 We may choose a normalized orthonormal basis for $(\dot\gamma^\bot)_0$ and $g_{\dot\gamma(0)}$,
 $e_1,\cdots,e_{n-1}$, such that $e_1,\cdots,e_k$ is a $g_{\dot\gamma(0)}$-orthonormal basis for $T_{\gamma(0)}P$.
 Therefore $e_1,\cdots,e_{n-1}, e_n=\dot\gamma(0)/\sqrt{L\circ\dot\gamma}$ is
a $g_{\dot\gamma(0)}$-orthonormal basis for $T_{\gamma(0)}M$.

\begin{lemma}\label{lem:base}
Let $\gamma:[0,\tau]\to M$ be an  $L$-geodesic which  starts from $P$  perpendicularly.
Suppose that  $g_{\dot\gamma(0)}$ (and so each $g_{\dot\gamma(t)}$) is positive definite, and that   $e_1,\cdots, e_n$ are as in the above paragraph.
Then there exists a basis of $\mathscr{J}^P_\gamma$,
$J_1,\cdots,J_n$ satisfying the following conditions:
\begin{eqnarray}
&&J_i(0)=e_i\quad\hbox{and}\quad D_{\dot{\gamma}}^{\dot{\gamma}}J_i(0)=\tilde{S}^P_{\dot\gamma(0)}(e_i),\quad
i=1,\cdots,k,\label{e:Jacobibase1}\\
&&J_i(0)=0\quad\hbox{and}\quad D_{\dot{\gamma}}^{\dot{\gamma}}J_i(0)=e_i,\quad
i=k+1,\cdots,n-1,\label{e:Jacobibase2}\\
&&J_n(t)=t\dot\gamma(t)\;\forall t\;\hbox{and so}\;D_{\dot{\gamma}}^{\dot{\gamma}}J_n(0)=\dot\gamma(0)=e_n, \label{e:Jacobibase3}\\
&&\textsf{nor}_\gamma(J_i)=J_i,\; i=1,\cdots,n-1.\label{e:Jacobibase4}
\end{eqnarray}
\end{lemma}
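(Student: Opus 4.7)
The plan is to construct each $J_i$ as the unique Jacobi field along $\gamma$ with prescribed initial data $(J_i(0),D^{\dot\gamma}_{\dot\gamma}J_i(0))\in T_{\gamma(0)}M\oplus T_{\gamma(0)}M$, verify that each satisfies the $P$-Jacobi boundary condition (\ref{e:P-JacobiField}), check linear independence so that the $J_i$ form a basis of the $n$-dimensional space $\mathscr{J}^P_\gamma$, and finally establish the orthogonality (\ref{e:Jacobibase4}). Since $\gamma$ is $C^4$ and the Chern symbols are $C^3$, expanding a candidate Jacobi field in a parallel orthonormal frame $E_1,\cdots,E_n$ along $\gamma$ reduces the Jacobi equation to the linear ODE (\ref{e:JacobiEq1}), so existence and uniqueness of a Jacobi field with arbitrary initial data is immediate.

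For $i=1,\cdots,k$, let $J_i$ be the Jacobi field with $J_i(0)=e_i$ and $D^{\dot\gamma}_{\dot\gamma}J_i(0)=\tilde S^P_{\dot\gamma(0)}(e_i)$; since $\tilde S^P_{\dot\gamma(0)}$ takes values in $T_{\gamma(0)}P$, the tangential part of $D^{\dot\gamma}_{\dot\gamma}J_i(0)$ equals $\tilde S^P_{\dot\gamma(0)}(J_i(0))$, so $J_i$ is $P$-Jacobi. For $i=k+1,\cdots,n-1$, let $J_i$ be the Jacobi field with $J_i(0)=0$ and $D^{\dot\gamma}_{\dot\gamma}J_i(0)=e_i$; here $e_i\in(T_{\gamma(0)}P)^\perp_{\dot\gamma(0)}$ by the choice of basis, so $\textrm{tan}^P_{\dot\gamma(0)}(e_i)=0=\tilde S^P_{\dot\gamma(0)}(0)$ and (\ref{e:P-JacobiField}) holds. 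For $J_n(t)=t\dot\gamma(t)$, the geodesic identity $D^{\dot\gamma}_{\dot\gamma}\dot\gamma=0$ gives $D^{\dot\gamma}_{\dot\gamma}D^{\dot\gamma}_{\dot\gamma}J_n=0$, and the antisymmetry $R^i_{jkl}=-R^i_{jlk}$ visible from the Chern curvature formula yields $R_{\dot\gamma}(\dot\gamma,J_n)\dot\gamma=tR_{\dot\gamma}(\dot\gamma,\dot\gamma)\dot\gamma=0$; hence $J_n$ is Jacobi and the $P$-Jacobi boundary condition is trivially satisfied. Linear independence is clear by inspecting the pairs of initial data: the first components distinguish $i\leq k$ from the rest, and among those fields with $J_i(0)=0$, the second components $e_{k+1},\cdots,e_{n-1},\dot\gamma(0)$ together with $e_1,\cdots,e_k$ span $T_{\gamma(0)}M$. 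Since $\dim\mathscr{J}^P_\gamma=n$, the $J_i$ form a basis.

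For (\ref{e:Jacobibase4}), the key observation, where the main work lies, is that along the $L$-geodesic $\gamma$ the scalar function $f(t):=g_{\dot\gamma(t)}(Y(t),\dot\gamma(t))$ is affine in $t$ for every Jacobi field $Y$. Indeed, applying (\ref{e:compat}) with $\xi=\dot\gamma$ and using $D^{\dot\gamma}_{\dot\gamma}\dot\gamma=0$ kills both the mixed and Cartan terms, giving $f'(t)=g_{\dot\gamma}(D^{\dot\gamma}_{\dot\gamma}Y,\dot\gamma)$; differentiating once more and invoking the Jacobi equation yields $f''(t)=g_{\dot\gamma}(R_{\dot\gamma}(\dot\gamma,Y)\dot\gamma,\dot\gamma)$, which vanishes by combining the skew-symmetry $R_v(\xi,\eta)=-R_v(\eta,\xi)$ (again from $R^i_{jkl}=-R^i_{jlk}$) with the identity (\ref{e:Rsym8}). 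Hence $f(t)=at+b$. For $i\in\{1,\cdots,n-1\}$, the constant $b=g_{\dot\gamma(0)}(J_i(0),\dot\gamma(0))$ vanishes: when $i\leq k$, $J_i(0)=e_i\in T_{\gamma(0)}P$ is $g_{\dot\gamma(0)}$-orthogonal to $\dot\gamma(0)$ by the perpendicularity condition (\ref{e:1.4}), and when $i\geq k+1$, $J_i(0)=0$. The slope $a=g_{\dot\gamma(0)}(D^{\dot\gamma}_{\dot\gamma}J_i(0),\dot\gamma(0))$ vanishes likewise: for $i\leq k$ because $\tilde S^P_{\dot\gamma(0)}(e_i)\in T_{\gamma(0)}P$ is $g_{\dot\gamma(0)}$-orthogonal to $\dot\gamma(0)$, and for $i\geq k+1$ because $e_i\in(\dot\gamma^\bot)_0$ by construction. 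Thus $g_{\dot\gamma}(J_i,\dot\gamma)\equiv 0$ and $\textsf{nor}_\gamma(J_i)=J_i$.

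The only nontrivial step is the curvature computation showing $f''\equiv 0$, which requires both the antisymmetry of $R_v$ in its first two arguments and the Finsler symmetry (\ref{e:Rsym8}); everything else is a routine combination of ODE existence/uniqueness with the explicit definitions of the $P$-Jacobi boundary condition and the shape operator $\tilde S^P_{\dot\gamma(0)}$.
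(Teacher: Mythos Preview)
Your proof is correct and agrees with the paper's argument for the construction of the $J_i$, the verification of the $P$-Jacobi boundary condition, and linear independence. The only genuine difference lies in the proof of (\ref{e:Jacobibase4}).

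The paper proves (\ref{e:Jacobibase4}) by invoking Lemma~\ref{orthogJacobi}(i)--(ii): it observes that $\textsf{nor}_\gamma(J_i)$ is again a Jacobi field, computes that $\textsf{nor}_\gamma(J_i)(0)=J_i(0)$ and $D^{\dot\gamma}_{\dot\gamma}\bigl(\textsf{nor}_\gamma(J_i)\bigr)(0)=\textsf{nor}_\gamma\bigl(D^{\dot\gamma}_{\dot\gamma}J_i(0)\bigr)=D^{\dot\gamma}_{\dot\gamma}J_i(0)$, and concludes by uniqueness of Jacobi fields with given initial data. Your route is instead to show directly that $t\mapsto g_{\dot\gamma}(J_i,\dot\gamma)$ is affine (via (\ref{e:compat}) and the Jacobi equation) and then check that both its value and slope vanish at $t=0$. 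Your approach is slightly more self-contained, as it avoids the appeal to Lemma~\ref{orthogJacobi}; the paper's approach is more modular, reusing a general decomposition result already established. Note also that your vanishing of $g_{\dot\gamma}(R_{\dot\gamma}(\dot\gamma,Y)\dot\gamma,\dot\gamma)$ could be obtained a touch more quickly from (\ref{e:Rsym7}) alone (take $w=\dot\gamma$ and use $R_{\dot\gamma}(\dot\gamma,\dot\gamma)=0$), without separately invoking (\ref{e:Rsym8}).
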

\begin{proof}
 Since for any $v,w\in T_{\gamma(0)}P$ there exists
a unique $C^4$ Jacobi field $J$ along $\gamma$ such that $J(0)=v$ and $D_{\dot{\gamma}}^{\dot{\gamma}}J(0)=w$
(cf. \cite[Lemma~3.14]{Jav15}), we have $C^4$ Jacobi fields  $J_i$ along $\gamma$ satisfying
(\ref{e:Jacobibase1}) and (\ref{e:Jacobibase2}).
Because $\tilde{S}^P_{\dot\gamma(0)}(e_i)\in T_{\gamma(0)}P$ for $i=1,\cdots,k$, and
$e_i$ is $g_{\dot\gamma(0)}$-orthonormal to $T_{\gamma(0)}P$ for $i=k+1,\cdots,n$,
that is, $(T_{\gamma(0)}P)^\perp_{\dot\gamma(0)}={\rm Span}(\{e_{k+1},\cdots,e_n\})$,
from (\ref{e:Jacobibase1})-(\ref{e:Jacobibase2}) we deduce
\begin{eqnarray*}
&&{\rm tan}_{\dot\gamma(0)}\left((D_{\dot{\gamma}}^{\dot{\gamma}}J_i)(0)\right)=D_{\dot{\gamma}}^{\dot{\gamma}}J_i(0)=
\tilde{S}^P_{\dot\gamma(0)}(J_i(0)),\quad
i=1,\cdots,k,\\
&&{\rm tan}_{\dot\gamma(0)}\left((D_{\dot{\gamma}}^{\dot{\gamma}}J_i)(0)\right)=
{\rm tan}_{\dot\gamma(0)}(e_i)=0=\tilde{S}^P_{\dot\gamma(0)}(J_i(0)),\quad
i=k+1,\cdots,n-1.
\end{eqnarray*}
Namely, $J_i$, $i=1,\cdots,n-1$, are $P$-Jacobi fields. By Lemma~\ref{orthogJacobi}(ii),
$\textsf{nor}_\gamma(J_i)$, $i=1,\cdots,n-1$, are Jacobi fields along $\gamma$.
Observe that
\begin{eqnarray*}
&&
\textsf{nor}_\gamma(J_i)(0)=e_i-\frac{g^F_{\dot\gamma}(e_i,\dot\gamma(0))}{g^F_{\dot\gamma}(\dot\gamma(0),
\dot\gamma(0))}\dot\gamma(0)=e_i=J_i(0),\quad i=1,\cdots,k,\\
&&\textsf{nor}_\gamma(J_i)(0)=0-\frac{g^F_{\dot\gamma}(0,\dot\gamma(0))}{g^F_{\dot\gamma}(\dot\gamma(0),
\dot\gamma(0))}\dot\gamma(0)=0=J_i(0),\quad i=k+1,\cdots,n-1,
\end{eqnarray*}
and that  Lemma~\ref{orthogJacobi}(i) implies
\begin{eqnarray*}
(D_{\dot{\gamma}}^{\dot{\gamma}}\textsf{nor}_\gamma(J_i))(0)&=&
\textsf{nor}_\gamma(D_{\dot{\gamma}}^{\dot{\gamma}}J_i))(0)\\
&=&D_{\dot{\gamma}}^{\dot{\gamma}}J_i(0)-\frac{g_{\dot\gamma}(D_{\dot{\gamma}}^{\dot{\gamma}}J_i(0),
\dot\gamma(0))}{g_{\dot\gamma}(\dot\gamma(0), \dot\gamma(0))}\dot\gamma(0)\\
&=&D_{\dot{\gamma}}^{\dot{\gamma}}J_i(0)=\tilde{S}^P_{\dot\gamma(0)}(e_i),\quad i=1,\cdots,k,\\
(D_{\dot{\gamma}}^{\dot{\gamma}}\textsf{nor}_\gamma(J_i))(0)&=&
\textsf{nor}_\gamma(D_{\dot{\gamma}}^{\dot{\gamma}}J_i))(0)\\
&=&D_{\dot{\gamma}}^{\dot{\gamma}}J_i(0)-\frac{g_{\dot\gamma}(D_{\dot{\gamma}}^{\dot{\gamma}}J_i(0),
\dot\gamma(0))}{g_{\dot\gamma}(\dot\gamma(0), \dot\gamma(0))}\dot\gamma(0)\\
&=&D_{\dot{\gamma}}^{\dot{\gamma}}J_i(0)=e_i,\quad i=k+1,\cdots,n-1.
\end{eqnarray*}
We obtain $\textsf{nor}_\gamma(J_i)=J_i$, $i=1,\cdots,n-1$. That is, $J_1,\cdots, J_{n-1}$ are $P$-Jacobi fields along $\gamma$
which are  perpendicular to $\dot\gamma$.
Suppose $\sum^{n}_{i=1}a_iJ_i=0$ for reals $a_1,\cdots,a_{n}$. Then
$\sum^{n}_{i=1}a_iJ_i(0)=0$ becomes $\sum^k_{i=1}a_ie_i=0$ by (\ref{e:Jacobibase2}). So $a_1=\cdots=a_k=0$.
Moreover
$$
0=D_{\dot{\gamma}}^{\dot{\gamma}}\left(\sum^{n}_{i=1}a_iJ_i\right)=\sum^{n}_{i=1}a_iD_{\dot{\gamma}}^{\dot{\gamma}}J_i\quad\hbox{and hence}\quad \sum^{n}_{i=1}a_iD_{\dot{\gamma}}^{\dot{\gamma}}J_i(0)=0,
$$
which implies $\sum^{n}_{i=k+1}a_ie_i=0$ by (\ref{e:Jacobibase2}) and therefore $a_i=0$ for $i=k+1,\cdots,n$.
Hence $J_1,\cdots, J_{n}$ form a basis of $\mathscr{J}^P_\gamma$.
\end{proof}

\begin{remark}\label{rm:Wu}
{\rm Since $J_1,\cdots, J_{n}$ are linearly independent,
if $\gamma(t_0)$ ($t_0\in (0,\tau]$) is not a $P$-focal point along $\gamma$,
then for any $(\lambda_1,\cdots,\lambda_{n})\in\mathbb{R}^{n}\setminus\{0\}$ it holds that
$\sum^{n}_{i=1}\lambda_iJ_i(t_0)\ne 0$.
Therefore $\{J_i(t_0)\}^{n-1}_{i=1}$ and $\{J_i(t_0)\}^{n}_{i=1}$ are bases for
 $(\dot\gamma^\bot)_{t_0}=\{v\in T_{\gamma(t_0)}M\,|\, g_{\dot{\gamma}(t_0)}(\dot\gamma(t_0),v)=0\}$
 and $T_{\gamma(t_0)}M$, respectively.
 }
 \end{remark}

\begin{lemma}\label{lem:Wu4.1}
Let $\gamma:[0,\tau]\to M$ be an  $L$-geodesic. (It must be $C^4$.)
 For an integer $1\le\ell\le 4$ let $V(t)$ ($0\le t\le \tau$) be a piecewise $C^\ell$ vector field along $\gamma$
 such that $V(0) = 0$. Then we have  a piecewise $C^{\ell-1}$-smooth
vector field $W$ along $\gamma$  with $W(0) = D_{\dot\gamma}^{\dot\gamma}V(0)$
and $W(t)=V(t)/t$ for $0< t\le\tau$.
\end{lemma}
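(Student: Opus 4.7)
The plan is to reduce the problem to a scalar statement by trivializing $\gamma^\ast TM$ via a parallel frame, and then invoke the elementary calculus fact that if $f\in C^\ell([0,a])$ vanishes at $0$, then $t\mapsto f(t)/t$ extends to a $C^{\ell-1}$ function on $[0,a]$ with value $\dot f(0)$ at the origin.

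First I would fix a partition $0=s_0<s_1<\cdots<s_N=\tau$ adapted to the piecewise $C^\ell$ vector field $V$, so that $V|_{[s_{j-1},s_j]}$ is $C^\ell$ for each $j$. As in the construction just above Proposition~\ref{prop:two-Jacobi}, choose a $C^4$ parallel orthonormal frame $E_1,\dots,E_n$ along $\gamma$. Writing
\begin{equation*}
V(t)=\sum_{i=1}^n v^i(t)\, E_i(t),
\end{equation*}
each $v^i$ is piecewise $C^\ell$ with $v^i(0)=0$, and in this parallel frame
\begin{equation*}
D^{\dot\gamma}_{\dot\gamma}V(t)=\sum_{i=1}^n \dot v^i(t)\, E_i(t),
\end{equation*}
so $D^{\dot\gamma}_{\dot\gamma}V(0)=\sum_i \dot v^i(0)\, E_i(0)$ (understanding $\dot v^i(0)$ as the right-hand derivative on the first piece).

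Next, on each interval $[s_{j-1},s_j]$ with $j\ge 2$ one has $s_{j-1}>0$, and the function $w^i(t):=v^i(t)/t$ is evidently $C^\ell$, hence $C^{\ell-1}$. The only delicate part concerns the first piece $[0,s_1]$. Here I would use the integral representation
\begin{equation*}
w^i(t)=\frac{v^i(t)}{t}=\int_0^1 \dot v^i(ts)\,ds,\qquad t\in(0,s_1],
\end{equation*}
and \emph{define} $w^i(0):=\dot v^i(0)=\int_0^1 \dot v^i(0)\,ds$. Since $\dot v^i\in C^{\ell-1}([0,s_1])$, differentiation under the integral sign shows that the right-hand side belongs to $C^{\ell-1}([0,s_1])$ as a function of $t$, with derivatives
\begin{equation*}
\frac{d^r w^i}{dt^r}(t)=\int_0^1 s^r\, \frac{d^r\dot v^i}{du^r}(ts)\,ds,\qquad 0\le r\le \ell-1.
\end{equation*}

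Finally, set $W(t):=\sum_i w^i(t)\, E_i(t)$. Since the $E_i$ are $C^4$ and each $w^i$ is piecewise $C^{\ell-1}$ on $[0,\tau]$, $W$ is piecewise $C^{\ell-1}$. For $t>0$ one has $W(t)=V(t)/t$ by construction, and at $t=0$
\begin{equation*}
W(0)=\sum_{i=1}^n w^i(0)\, E_i(0)=\sum_{i=1}^n \dot v^i(0)\, E_i(0)=D^{\dot\gamma}_{\dot\gamma}V(0),
\end{equation*}
which gives both required properties. The only real obstacle is the first piece, handled by the integral formula above; everything else is bookkeeping through the parallel frame.
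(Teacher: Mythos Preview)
Your proof is correct and follows essentially the same approach as the paper: trivialize $\gamma^\ast TM$ by a $C^4$ parallel frame, then apply the integral representation $v^i(t)/t=\int_0^1 \dot v^i(ts)\,ds$ on the first subinterval to extend $V/t$ across $t=0$ with value $D_{\dot\gamma}^{\dot\gamma}V(0)$. The only cosmetic difference is that the paper splits at an auxiliary point $\varepsilon/2$ inside the first $C^\ell$-interval rather than at your partition point $s_1$.
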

\begin{proof}
Starting at any basis $e_1,\cdots,e_n$ for $T_{\gamma(0)}M$
  we may construct a $C^4$ parallel  orthonormal frame  fields $E_1,\cdots,E_n$ along $\gamma$. Write $V(t)=\sum_iv^i(t)E_i(t)$ ($0\le t\le\tau$),
where each $v^i$ is $C^\ell$ and satisfies $v^i(0)=0$. Clearly, we have a small $0<\varepsilon<\tau$ such that
$v^i$ is $C^\ell$ on $[0,\varepsilon]$. Then
  $v^i(t)=t\int^1_0(v^i)'(ts)ds$ for all $t\in [0,\varepsilon]$. Define
  $$
  w^i(t)=\left\{\begin{array}{ll}
  \int^1_0(v^i)'(ts)ds&\quad\hbox{if $t\in [0, \varepsilon/2]$},\\
  v^i(t)/t &\quad\hbox{if $t\in [\varepsilon/2, \tau]$}.
  \end{array}\right.
  $$
It is easily seen that  $w^i$ is piecewise $C^{\ell-1}$ and that
 $w^i(0)=(v^i)'(0)$ and $w^i(t)=v^i(t)/t$ for $0<t\le\tau$.
Set $W(t)=w^i(t)E_i(t)$. It is  piecewise $C^{\ell-1}$,  and $W(t)=V(t)/t$ for $0<t\le\tau$. Clearly,
$D_{\dot\gamma}^{\dot\gamma}V(t)=\sum_i({v}^i)'(t)E_i$ for $t\in [0,\tau]$,
and so $D_{\dot\gamma}^{\dot\gamma}V(0)=W(0)$.
\end{proof}

The following result of Riemannian geometry (see \cite[III. Lemma~2.9]{Sak96}, for example)
carry over to the pseudo-Finsler case immediately. (We shall prove it for completeness.)

\begin{lemma}\label{lem:Sa2.9}
Under the assumptions of Lemma~\ref{lem:base},
 $\tilde{\bf I}^\gamma_{P,q}$ is positive definite on $PC^{2}_{P\times q}(\gamma^\ast TM)$ provided that
$P$ has no focal points along $\gamma$ on $(0,\tau]$.
\end{lemma}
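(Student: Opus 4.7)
The strategy is to follow the classical Riemannian proof of this fact (cf.\ \cite[III, Lemma~2.9]{Sak96}), adapted to the present conic pseudo-Finsler setting. The crucial simplification is that along the $L$-geodesic $\gamma$ one has $D_{\dot\gamma}^{\dot\gamma}\dot\gamma=0$, so the Cartan correction in the compatibility identity (\ref{e:compat}) vanishes and $D_{\dot\gamma}^{\dot\gamma}$ is genuinely $g_{\dot\gamma}$-compatible along $\gamma$; all the standard Riemannian manipulations therefore go through verbatim. The plan is to expand $V\in PC^2_{P\times q}(\gamma^\ast TM)$ in the basis of $P$-Jacobi fields produced by Lemma~\ref{lem:base} and reduce $\tilde{\bf I}^\gamma_{P,q}(V,V)$ to the integral of a manifestly non-negative quantity.

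First I would use the no-focal-point hypothesis together with Remark~\ref{rm:Wu} to ensure that the $P$-Jacobi fields $J_1,\dots,J_n$ from Lemma~\ref{lem:base} satisfy that $\{J_i(t)\}_{i=1}^n$ is a basis of $T_{\gamma(t)}M$ for every $t\in(0,\tau]$. Any $V\in PC^2_{P\times q}(\gamma^\ast TM)$ then admits a unique expansion $V(t)=\sum_i f_i(t)J_i(t)$ on $(0,\tau]$ with piecewise $C^2$ coefficients $f_i$. Since $J_i(0)=e_i$ for $i\le k$ and $J_i(0)=0$ for $i>k$, I would rescale the basis near $0$ by setting $\tilde J_i=J_i/t$ for $i>k$ (as provided by Lemma~\ref{lem:Wu4.1}, with $\tilde J_i(0)=e_i$) and combine this with $V(0)\in T_{\gamma(0)}P=\mathrm{span}(e_1,\dots,e_k)$ to show that every $f_i$ extends continuously (in fact piecewise $C^1$) to $t=0$, so all boundary evaluations below are legitimate.

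Setting $W:=\sum_i \dot f_i J_i$ and $\Phi:=\sum_i f_iD_{\dot\gamma}^{\dot\gamma}J_i$, so $D_{\dot\gamma}^{\dot\gamma}V=W+\Phi$, a direct computation that combines the Jacobi equation $D_{\dot\gamma}^{\dot\gamma}D_{\dot\gamma}^{\dot\gamma}J_i=R_{\dot\gamma}(\dot\gamma,J_i)\dot\gamma$, the metric compatibility noted above, and the Lagrange identity $g_{\dot\gamma}(D_{\dot\gamma}^{\dot\gamma}J_i,J_j)=g_{\dot\gamma}(J_i,D_{\dot\gamma}^{\dot\gamma}J_j)$ (constant along $\gamma$ by Lemma~\ref{orthogJacobi}(iii) and equal to $0$ at $t=0$ by Lemma~\ref{lem:BoundaryBase}) produces the pointwise identity
\begin{equation*}
g_{\dot\gamma}(R_{\dot\gamma}(\dot\gamma,V)\dot\gamma,V)+g_{\dot\gamma}(D_{\dot\gamma}^{\dot\gamma}V,D_{\dot\gamma}^{\dot\gamma}V)=\frac{d}{dt}g_{\dot\gamma}(V,\Phi)+g_{\dot\gamma}(W,W).
\end{equation*}
Integrating on $[0,\tau]$, the upper boundary term vanishes because $V(\tau)=0$, while the $P$-Jacobi conditions ${\rm tan}^P_{\dot\gamma(0)}(D_{\dot\gamma}^{\dot\gamma}J_i(0))=\tilde S^P_{\dot\gamma(0)}(J_i(0))$ together with $V(0)\in T_{\gamma(0)}P$ and the symmetry of $\tilde S^P_{\dot\gamma(0)}$ yield $g_{\dot\gamma(0)}(V(0),\Phi(0))=g_{\dot\gamma(0)}(\tilde S^P_{\dot\gamma(0)}(V(0)),V(0))$, exactly cancelling the shape-operator term in the definition of $\tilde{\bf I}^\gamma_{P,q}$. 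The net outcome is
\begin{equation*}
\tilde{\bf I}^\gamma_{P,q}(V,V)=\int_0^\tau g_{\dot\gamma(t)}(W(t),W(t))\,dt\ge 0,
\end{equation*}
with equality only when $W\equiv 0$, i.e.\ all $f_i$ are constant on $(0,\tau]$; combined with $V(\tau)=0$ and the linear independence of $\{J_i(\tau)\}_{i=1}^n$, this forces every $f_i\equiv 0$ and hence $V\equiv 0$, giving positive definiteness.

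The main obstacle is the behavior near $t=0$: because $J_{k+1}(0)=\cdots=J_n(0)=0$, the coefficients $f_i$ with $i>k$ may be unbounded at $0$, so both the expansion $V=\sum f_iJ_i$ and the limit of the boundary term $g_{\dot\gamma(\varepsilon)}(V(\varepsilon),\Phi(\varepsilon))$ as $\varepsilon\downarrow 0$ require justification. My plan is to first carry out the identity on $[\varepsilon,\tau]$ where everything is smooth, and then to use Lemma~\ref{lem:Wu4.1} through the rescaled fields $\tilde J_i=J_i/t$ to pass safely to the limit $\varepsilon\downarrow 0$; this is the only place where the lower smoothness of the setting requires genuine care.
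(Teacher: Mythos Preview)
Your proposal is correct and follows essentially the same route as the paper's proof: expand $V$ in the $P$-Jacobi basis $J_1,\dots,J_n$ of Lemma~\ref{lem:base}, handle the degeneracy at $t=0$ via the rescaled fields $\tilde J_i=J_i/t$ for $i>k$ (Lemma~\ref{lem:Wu4.1}) to obtain piecewise $C^1$ coefficients $f^i$ on all of $[0,\tau]$, use the symmetry $g_{\dot\gamma}(D_{\dot\gamma}^{\dot\gamma}J_i,J_j)=g_{\dot\gamma}(J_i,D_{\dot\gamma}^{\dot\gamma}J_j)$ from Lemma~\ref{lem:BoundaryBase}, and reduce the index form to the nonnegative integral $\int_0^\tau g_{\dot\gamma}(W,W)\,dt$ with $W=\sum_i\dot f_iJ_i$; this is exactly the paper's Steps~1 and~2, only packaged as a single pointwise identity rather than a term-by-term computation. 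The paper deals with $t=0$ directly by showing the $f^i$ are piecewise $C^1$ up to $0$ via the $\{J_j,\tilde J_\alpha\}$ frame, so your backup $\varepsilon$-cutoff argument is unnecessary once that is in place.
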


\begin{remark}\label{rm::Sa2.9}
{\rm As in (\ref{e:positive-focalA}) and (\ref{e:positive-focalB}),
Corollary~\ref{cor:Hermann} can only lead to that $\tilde{\bf I}^\gamma_{P,q}$ is positive definite
on the subspace $PC^{2}_{P\times q}(\gamma^\ast TM)_0$,
which is not sufficient for completing the proof of Proposition~\ref{prop:Sak3.1}.
However, if $P$ is a point, as in (\ref{e:positive-conjA})-(\ref{e:positive-conjB})
 Lemma~\ref{lem:Sa2.9} may be derived from Lemma~\ref{lem:31.9}.}
 \end{remark}

\begin{proof}[\bf Proof of Lemma~\ref{lem:Sa2.9}]
{\bf Step 1}.
Let $Y_1,\cdots, Y_n$ be a basis of $\mathscr{J}^P_\gamma$. (They are actually $C^4$.) We claim that
 for each $X\in PC^{2}_{P\times q}(\gamma^\ast TM)$
 there exist piecewise $C^1$ functions
$f^i(t)$ ($i=1,\cdots,n$) such that $X=\sum^n_{i=1}f^iY_i$.
Indeed, since $P$ has no focal points along $\gamma$ on $(0,\tau]$ we deduce
\begin{equation}\label{e:base}
\hbox{$\{Y_i(t)\}^{n}_{i=1}$ forms a basis for $T_{\gamma(t)}M$
for $0<t\le\tau$.}
\end{equation}

Consider  the basis of $\mathscr{J}^P_\gamma$ constructed in Lemma~\ref{lem:base},
 $J_1,\cdots, J_n$. (They are $C^4$.) For each $\alpha=k+1,\cdots,n$, since $J_\alpha(0)=0$, by the proof of Lemma~\ref{lem:Wu4.1}
   $$
  \tilde{J}_\alpha(t)=\left\{\begin{array}{ll}
  D_{\dot{\gamma}}^{\dot{\gamma}}J_\alpha(0)&\quad\hbox{if $t=0$},\\
  J_\alpha(t)/t &\quad\hbox{if $t\in (0, \tau]$}
  \end{array}\right.
  $$
 defines a $C^3$ vector field along $\gamma$.
 Recall that $\{J_i(0)=e_i\}^{k}_{i=1}$ and $\{D_{\dot{\gamma}}^{\dot{\gamma}}J_\alpha(0)=e_\alpha\}^n_{\alpha=k+1}$
form  a basis for $T_{\gamma(0)}P$ and  a basis for $(T_{\gamma(0)}P)^\bot_{\dot\gamma(0)}$, respectively.
From this and (\ref{e:base}) we obtain that
\begin{equation}\label{e:symmetric3}
\{J_j(t),\;\tilde{J}_\alpha(t)\,|\, 1\le j\le k,\;k+1\le\alpha\le n\}
\end{equation}
forms a basis of $T_{\gamma(t)}M$ for $0\le t\le \tau$.
Hence each $X\in PC^{2}_{P\times q}(\gamma^\ast TM)$ may be written as
$$
X(t)=\sum^k_{j=1}h^j(t)J_j(t)+ \sum^n_{\alpha=k+1}h^\alpha(t)\tilde{J}_\alpha(t),
$$
where $h^j(t)$ and $h^\alpha(t)$ are piecewise $C^2$ functions on $[0, \tau]$.
Since $X(0)\in T_{\gamma(0)}P$, we get $h^\alpha(0)=0$ and so $h^\alpha(t)=th^\alpha_1(t)$
with piecewise $C^1$ functions $h^\alpha_1$ by the proof of Lemma~\ref{lem:Wu4.1}. These lead to
$$
X(t)=\sum^k_{j=1}h^j(t)J_j(t)+ \sum^n_{\alpha=k+1}h^\alpha_1(t)J_\alpha(t)
$$
on $[0,\tau]$.
Note that $J_i=\sum^n_{j=1}a_{ij}Y_j$ for some matrix $(a_{ij})\in\mathbb{R}^{n\times n}$.
The claim at the beginning follows immediately.

\noindent{\bf Step 2}.
Let us take $Y_1,\cdots, Y_n$ as $J_1,\cdots, J_n$.
 Note that each $X\in PC^{2}_{P\times q}(\gamma^\ast TM)$ may be written as
 $X=\sum^n_{i=1}f^iJ_i$ for piecewise $C^1$ functions
$f^i(t)$ ($i=1,\cdots,n$). We obtain
\begin{eqnarray*}
g_{\dot\gamma}(D_{\dot{\gamma}}^{\dot{\gamma}}X(t), D_{\dot{\gamma}}^{\dot{\gamma}}X(t))
&=&\sum_{i,j}g_{\dot\gamma}(D_{\dot{\gamma}}^{\dot{\gamma}}(f^i(t)J_i(t)), D_{\dot{\gamma}}^{\dot{\gamma}}(f^j(t)J_j(t))\\
&=&\sum_{i,j}g_{\dot\gamma}((f^i(t))'J_i(t), (f^j(t))'J_j(t))\\
&&+\sum_{i,j}f^i(t)f^j(t)g_{\dot\gamma}(D_{\dot{\gamma}}^{\dot{\gamma}}(J_i(t)), D_{\dot{\gamma}}^{\dot{\gamma}}(J_j(t))\\
&&+\sum_{i,j}[(f^i(t))'f^j(t)+ (f^j(t))'f^i(t)]g_{\dot\gamma}(D_{\dot{\gamma}}^{\dot{\gamma}}(J_i(t)), J_j(t))
\end{eqnarray*}
because $g_{\dot\gamma}(D_{\dot{\gamma}}^{\dot{\gamma}}(J_i(t)), J_j(t))=g_{\dot\gamma}(J_i(t),
D_{\dot{\gamma}}^{\dot{\gamma}}(J_j(t)))$ by Lemma~\ref{lem:BoundaryBase}.
Moreover
\begin{eqnarray*}
g_{\dot\gamma}(D_{\dot{\gamma}}^{\dot{\gamma}}J_i, D_{\dot{\gamma}}^{\dot{\gamma}}J_j)&=&\frac{d}{dt}
g_{\dot\gamma}(D_{\dot{\gamma}}^{\dot{\gamma}}J_i, J_j)-
g_{\dot\gamma}(D_{\dot{\gamma}}^{\dot{\gamma}}D_{\dot{\gamma}}^{\dot{\gamma}}J_i, J_j)\nonumber\\
&=&\frac{d}{dt}g_{\dot\gamma}(D_{\dot{\gamma}}^{\dot{\gamma}}J_i, J_j)-
g_{\dot\gamma}(R_{\dot\gamma}(\dot\gamma,J_i)\dot\gamma, J_j)
\end{eqnarray*}
for $1\le i,j\le n$ by (\ref{e:compat}). Then
\begin{eqnarray*}
g_{\dot\gamma}(D_{\dot{\gamma}}^{\dot{\gamma}}X(t), D_{\dot{\gamma}}^{\dot{\gamma}}X(t))
&=&\sum_{i,j}g_{\dot\gamma}((f^i(t))'J_i(t), (f^j(t))'J_j(t))\\
&&-\sum_{i,j}f^i(t)f^j(t)g_{\dot\gamma}(R_{\dot\gamma}(\dot\gamma,J_i)\dot\gamma, J_j)\\
&&+\sum_{i,j}\frac{d}{dt}\left[f^i(t)f^j(t)g_{\dot\gamma}(D_{\dot{\gamma}}^{\dot{\gamma}}(J_i(t)), J_j(t))\right].
\end{eqnarray*}
and so
\begin{eqnarray*}
\int^\tau_0g_{\dot\gamma}(D_{\dot{\gamma}}^{\dot{\gamma}}X(t), D_{\dot{\gamma}}^{\dot{\gamma}}X(t))dt
&=&\sum_{i,j}\int^\tau_0g_{\dot\gamma}((f^i(t))'J_i(t), (f^j(t))'J_j(t))dt\\
&&-\sum_{i,j}\int^\tau_0f^i(t)f^j(t)g_{\dot\gamma}(R_{\dot\gamma}(\dot\gamma,J_i)\dot\gamma, J_j)dt\\
&&+\sum_{i,j}\left[f^i(t)f^j(t)g_{\dot\gamma}(D_{\dot{\gamma}}^{\dot{\gamma}}(J_i(t)), J_j(t))\right]^{t=\tau}_{t=0}.
\end{eqnarray*}
From this we derive
\begin{eqnarray*}
\tilde{\bf I}^\gamma_{P,q}(X, X)&=&
\int_0^\tau \left(g_{\dot\gamma}(R_{\dot\gamma}(\dot\gamma,X)\dot\gamma, X)+g_{\dot\gamma}(D_{\dot\gamma}^{\dot\gamma}X,D_{\dot\gamma}^{\dot\gamma}X)\right)dt
+g_{\dot\gamma(0)}(\tilde{S}^P_{\dot\gamma(0)}(X(0)), X(0))\\
&=&\sum_{i,j}\int_0^\tau g_{\dot\gamma}((f^i(t))'J_i(t), (f^j(t))'J_j(t))dt\\
&&+\sum_{i,j}\left[f^i(t)f^j(t)g_{\dot\gamma}(D_{\dot{\gamma}}^{\dot{\gamma}}(J_i(t)), J_j(t))\right]^{t=\tau}_{t=0}
+g_{\dot\gamma(0)}(\tilde{S}^P_{\dot\gamma(0)}(X(0)), X(0)).
 \end{eqnarray*}
Note that $0=X(\tau)=\sum_if^i(\tau)J_i(\tau)$ and that
$X(0)\in T_{\gamma(0)}P$ implies $f^i(0)=0$ for $i=k+1,\cdots,n$.
These and (\ref{e:Jacobibase1}) lead to
\begin{eqnarray*}
\sum_{i,j}\left[f^i(t)f^j(t)g_{\dot\gamma}(D_{\dot{\gamma}}^{\dot{\gamma}}(J_i(t)), J_j(t))\right]^{t=\tau}_{t=0}&=&-
\sum^k_{i=1}f^i(0)g_{\dot\gamma}(D_{\dot{\gamma}}^{\dot{\gamma}}J_i(0), X(0))\\
&=&-\sum^k_{i=1}f^i(0)g_{\dot\gamma}(\tilde{S}^P_{\dot\gamma(0)}(e_i), X(0))\\
&=&-g_{\dot\gamma}(\tilde{S}^P_{\dot\gamma(0)}(X(0)), X(0)).
 \end{eqnarray*}
Hence we arrive at
\begin{eqnarray}\label{e:symmetric5}
\tilde{\bf I}^\gamma_{P,q}(X, X)=\sum_{i,j}
\int_0^\tau g_{\dot\gamma}((f^i(t))'J_i(t), (f^j(t))'J_j(t))dt.
 \end{eqnarray}
 Since $g_{\dot\gamma(t)}$ is positive definite on $[0,\tau]$,
it follows that $\tilde{\bf I}^\gamma_{P,q}(X, X)\ge 0$ for each $X\in PC^{2}_{P\times q}(\gamma^\ast TM)$.
If $\tilde{\bf I}^\gamma_{P,q}(X, X)=0$ then for any $1\le i\le n$ we have
$(f^i(t))'\equiv 0$, and hence $f^i\equiv{\rm const}$.
Therefore $X$ is a $P$-Jacobi field, i.e., $X\in{\rm Ker}(\tilde{\bf I}^\gamma_{P,q})$.
But $\gamma(\tau)$ is not a focal point of $P$ along $\gamma$. Hence $X=0$, i.e., the desired conclusion is proved.
\end{proof}

\begin{proof}[\bf Proof of Proposition~\ref{prop:Sak3.1}]
Let  $0=t_0<\cdots<t_m=\tau$ be the  partition above Proposition~\ref{prop:Sak3.1}.
 Then $\gamma((0, t_1])$ contains  no $P$-focal points, and
 for each $i=1,\cdots, m-1$ and any $t_i\le a<b\le t_{i+1}$
points $\gamma(a)$ and $\gamma(b)$ are not mutually conjugate along $\gamma|_{[a, b]}$.
It follows that  ${\bf T}\cap PC^2(\gamma^\ast TM)_{00}=\{0\}$, where
$$
{\bf T}=\{Y\in PC^2_{P\times q}(\gamma^\ast TM)\,|\, Y(t_i)=0,\,i=1,\cdots,m-1\}.
$$
(Note that $Y(t_m)=0$ is clear.)
For $X\in PC^2_{P\times q}(\gamma^\ast TM)$,
by Remark~\ref{rm:Wu} we have a $C^2$ $P$-Jacobi field $Y_0$ along $\gamma|_{[0,t_1]}$ such that
$Y_0(t_1)=X(t_1)$, and $C^2$ Jacobi fields $Y_i$ along $\gamma|_{[t_i,t_{i+1}]}$ satisfying
$Y_i(t_i)=X(t_i)$ and $Y_i(t_{i+1})=X(t_{i+1})$ for $i=1,\cdots,m-1$.
Gluing these $Y_i$ yields a $Y\in PC^2_{P\times q}(\gamma^\ast TM)_{00}$
to satisfy the condition $Y(t_i)=X(t_i)$ ($i=1,\cdots,m-1$). Define $Z:=X-Y$. Then $Z(t_i)=0$ for $i=1,\cdots, t_m$
(since $X(t_m)=0=Y(t_m)$), that is, $Z\in {\bf T}$. Hence there is the following direct sum decomposition of vector spaces
$$
PC^2_{P\times q}(\gamma^\ast TM)={\bf T}\oplus PC^2_{P\times q}(\gamma^\ast TM)_{00}.
$$
\noindent{\bf Claim~C}. {\it ${\bf T}$ and $PC^2_{P\times q}(\gamma^\ast TM)_{00}$
are orthonormal with respect to $\tilde{\bf I}^\gamma_{P,q}$}.

In fact, for $W\in {\bf T}$ and
$V\in PC^2_{P\times q}(\gamma^\ast TM)_{00}$, since each $V|_{[t_i, t_{i+1}]}$ is $C^2$ and
\begin{eqnarray*}
g_{\dot\gamma}(D_{\dot{\gamma}}^{\dot{\gamma}}V, D_{\dot{\gamma}}^{\dot{\gamma}}W)=\frac{d}{dt}
g_{\dot\gamma}(D_{\dot{\gamma}}^{\dot{\gamma}}V, W)-
g_{\dot\gamma}(D_{\dot{\gamma}}^{\dot{\gamma}}D_{\dot{\gamma}}^{\dot{\gamma}}V, W)
\end{eqnarray*}
by (\ref{e:compat}),
as in  (\ref{e:secondDiff+}) we deduce
\begin{eqnarray*}
\tilde{\bf I}^\gamma_{P,q}(V, W)&=&
\sum^m_{i=0}\int_{t_i}^{t_{i+1}}\left(g_{\dot\gamma}(R_{\dot\gamma}(\dot\gamma, V)\dot\gamma, W)-g_{\dot\gamma}(D_{\dot\gamma}^{\dot\gamma}D_{\dot\gamma}^{\dot\gamma}V,W)\right)dt
\nonumber\\
&&+\sum^m_{i=0}g_{\dot\gamma}(D_{\dot{\gamma}}^{\dot{\gamma}}V, W)\Big|^{t_{i+1}}_{t_i}+g_{\dot\gamma(0)}(\tilde{S}^P_{\dot\gamma(0)}(V(0)), W(0))\\
&=&\int_0^\tau \left(g_{\dot\gamma}(R_{\dot\gamma}(\dot\gamma,V)\dot\gamma, W)-g_{\dot\gamma}(D_{\dot\gamma}^{\dot\gamma}D_{\dot\gamma}^{\dot\gamma}V,W)\right)dt
\nonumber\\
&&-g_{\dot\gamma(0)}(D_{\dot\gamma}^{\dot\gamma}V(0)-\tilde{S}^P_{\dot\gamma(0)}(V(0)),W(0))+
g_{\dot\gamma(1)}(D_{\dot\gamma}^{\dot\gamma}V(\tau),W(\tau))
\nonumber\\
&&+\sum^{m-1}_{i=1}g_{\dot\gamma(t)}\left(D_{\dot\gamma}^{\dot\gamma}V(t)\big|^{t_i+}_{t_i-},W(t_i)\right)=0.
 \end{eqnarray*}

\noindent{\bf Claim~D}. {\it $\tilde{\bf I}^\gamma_{P,q}$ is positive definite on ${\bf T}$}.

Indeed, let $\gamma_i=\gamma|_{[t_i,t_{i+1}]}$, $i=0,1,\cdots,m-1$.
If $W\in {\bf T}\setminus\{0\}$, then there is an $i$ such that $W|_{[t_i,t_{i+1}]}\ne 0$.
If $i=0$, then $\tilde{\bf I}^{\gamma_0}_{P,\gamma(t_1)}(W|_{[t_0,t_{1}]},W|_{[t_0,t_{1}]})>0$
 by Lemma~\ref{lem:Sa2.9}. (Note that (\ref{e:positive-focalA}) and (\ref{e:positive-focalB})
 are insufficient for proving this).   If $i>0$ then
  $\tilde{\bf I}^{\gamma_i}_{\gamma(t_i),\gamma(t_{i+1})}(W|_{[t_i,t_{i+1}]},W|_{[t_i,t_{i+1}]})>0$
  by (\ref{e:positive-conjA})-(\ref{e:positive-conjB}).
  It follows from these that
  $$
\tilde{\bf I}^\gamma_{P,q}(W,W)=\tilde{\bf I}^{\gamma_0}_{P,\gamma(t_1)}(W|_{[t_0,t_{1}]},W|_{[t_0,t_{1}]})+
\sum^{m-1}_{i=1}\tilde{\bf I}^{\gamma_i}_{\gamma(t_i),\gamma(t_{i+1})}(W|_{[t_i,t_{i+1}]},W|_{[t_i,t_{i+1}]})>0.
$$
Therefore we have proved Proposition~\ref{prop:Sak3.1}.
\end{proof}

\noindent{\bf Step 4}(\textsf{Prove the second part in} (iii)).
We only need to prove (\ref{e:MS2}).
 The ideas are the same as those of \cite[Theorem~1.2]{Pe06}. Let
$$
 PC^{2}_{P\times Q}(\gamma^\ast TM)=\{
\xi\in PC^{2}(\gamma^\ast TM)\,|\,\xi(0)\in T_{\gamma(0)}P,\;\xi(\tau))\in T_{\gamma(\tau)}Q\}.
$$
It is dense in $W^{1,2}_{P\times Q}(\gamma^\ast TM)$ and so
\begin{equation}\label{e:Step4}
{\rm Index}\left({\bf I}^\gamma_{P,Q}\right)={\rm Index}\left({\bf I}^\gamma_{P,Q}|_{PC^{2}_{P\times Q}(\gamma^\ast TM)}\right).
\end{equation}
Choose a complementary subspace $\mathscr{J}^P_{\gamma,0}$  of $\mathscr{J}^P_\gamma(\tau)$ in $\mathscr{J}^P_\gamma$,
that is, $\mathscr{J}^P_\gamma=\mathscr{J}^P_{\gamma,0}\oplus\mathscr{J}^P_\gamma(\tau)$.
By the assumptions of Theorem~\ref{th:MorseIndex}, $\{X(\tau)\,|\,X\in\mathscr{J}^P_\gamma\}\supseteq T_{\dot\gamma(\tau)}Q$.
(This may be satisfied if $\gamma(\tau)$ is not a $P$-focal point. In fact,
 $T_{\gamma(\tau)}Q\subset(\dot\gamma^\bot)_{\tau}=\{v\in T_{\gamma(\tau)}M\,|\, g_{\dot{\gamma}(\tau)}(\dot\gamma(\tau),v)=0\}$
and  $\{J_i(\tau)\}^{n-1}_{i=1}$ spans $ (\dot\gamma^\bot)_{\tau}$ by Remark~\ref{rm:Wu}.)
Then for each $V\in PC^{2}_{P\times Q}(\gamma^\ast TM)$ we have $W\in \mathscr{J}^P_{\gamma,0}$
  such that $W(\tau)=V(\tau)$.
  Since $Z:=V-W\in PC^{2}_{P\times q}(\gamma^\ast TM)$ and $PC^{2}_{P\times q}(\gamma^\ast TM)\cap\mathscr{J}^P_{\gamma,0}=\{0\}$ imply
$$
PC^{2}_{P\times Q}(\gamma^\ast TM)=PC^{2}_{P\times q}(\gamma^\ast TM)\oplus\mathscr{J}^{P}_{\gamma,0},
$$
by (\ref{e:secondDiff+}) it is easily seen that ${\bf I}^\gamma_{P,Q}(V, W)=0$ for any
$V\in \mathscr{J}^{P}_{\gamma,0}$ and $W\in PC^{2}_{P\times q}(\gamma^\ast TM)$. That is,
$PC^{2}_{P\times q}(\gamma^\ast TM)$ and $\mathscr{J}^{P}_{\gamma,0}$ are orthonormal with respect to
${\bf I}^\gamma_{P,Q}$. Hence
$$
{\rm Index}\left({\bf I}^\gamma_{P,Q}|_{PC^{2}_{P\times Q}(\gamma^\ast TM)}\right)={\rm Index}\left({\bf I}^\gamma_{P,Q}|_{PC^{2}_{P\times q}(\gamma^\ast TM)}\right)+ {\rm Index}\left({\bf I}^\gamma_{P,Q}|_{\mathscr{J}^P_{\gamma,0}}\right).
$$
Clearly, ${\bf I}^\gamma_{P,Q}|_{PC^{2}_{P\times q}(\gamma^\ast TM)}=\tilde{\bf I}^\gamma_{P,q}$ and
$$
{\bf I}^\gamma_{P,Q}|_{\mathscr{J}^{P}_{\gamma,0}}(J_1,J_2)=
g_{\dot\gamma(\tau)}(D_{\dot\gamma}^{\dot\gamma}J_1(\tau)-\tilde{S}^Q_{\dot\gamma(\tau)}(J_1(\tau)), J_2(\tau)).
$$
Note that $\mathscr{J}^P_\gamma(\tau)\subset {\rm Ker}\left({\bf I}^\gamma_{P,Q}|_{\mathscr{J}^{P}_{\gamma}}\right)$.
We have ${\rm Index}\left({\bf I}^\gamma_{P,Q}|_{\mathscr{J}^{P}_{\gamma,0}}\right)=
{\rm Index}\left({\bf I}^\gamma_{P,Q}|_{\mathscr{J}^{P}_{\gamma}}\right)$ and so
$$
{\rm Index}\left({\bf I}^\gamma_{P,Q}|_{PC^{2}_{P\times Q}(\gamma^\ast TM)}\right)={\rm Index}\left(
\tilde{\bf I}^\gamma_{P,q}\right)+ {\rm Index}\left({\bf I}^\gamma_{P,Q}|_{\mathscr{J}^P_{\gamma}}\right).
$$
(\ref{e:MS2}) follows from (\ref{e:Step4}) and this.
\hfill$\Box$\vspace{2mm}

\begin{proof}[\bf Proof of Proposition~B]
{\bf (i)}. In fact, for $X\in W^{1,2}_{P\times Q}(\gamma^\ast TM)$,  we obtain
$\textsf{tan}_\gamma(X)(0)=0$ and $\textsf{tan}_\gamma(X)(\tau)=0$
since $X(0)\in T_{\gamma(0)}P$, $X(\tau)\in T_{\gamma(\tau)}Q$
and $\gamma$ is $g_{\dot\gamma}$-orthonormal (or perpendicular) to $P$ and $Q$.
 It follows from these and  (\ref{e:secondDiff}) that
 \begin{eqnarray}\label{e:secondDiff-}
{\bf I}^\gamma_{P,Q}(\textsf{nor}_\gamma(V), \textsf{tan}_\gamma(W))&=&\int_0^\tau g_{\dot\gamma}(R_{\dot\gamma}(\dot\gamma,
\textsf{nor}_\gamma(V))\dot\gamma, \textsf{tan}_\gamma(W)dt\nonumber\\
&&+\int_0^\tau g_{\dot\gamma}(D_{\dot\gamma}^{\dot\gamma}\textsf{nor}_\gamma(V),D_{\dot\gamma}^{\dot\gamma}\textsf{tan}_\gamma(W))dt.
 \end{eqnarray}
 By Lemma~\ref{orthogJacobi}(i)
 $D_{\dot{\gamma}}^{\dot{\gamma}}\big(\textsf{tan}_{\gamma}(W)\big)=\textsf{tan}_{\gamma}(D_{\dot{\gamma}}^{\dot{\gamma}}W)$ and
$D_{\dot{\gamma}}^{\dot{\gamma}}\big(\textsf{nor}_\gamma(V)\big)=\textsf{nor}_\gamma(D_{\dot{\gamma}}^{\dot{\gamma}}V)$.
Hence the second term in the right side of (\ref{e:secondDiff-}) vanishes.
Moreover, by (\ref{e:Rsym8})
 $$
 g_{\dot\gamma}(R_{\dot\gamma}(\dot\gamma, \textsf{nor}_\gamma(V))\dot\gamma, \dot\gamma)=-g_{\dot\gamma}(R_{\dot\gamma}(\dot\gamma, \textsf{nor}_\gamma(V))\dot\gamma, \dot\gamma)
 $$
 and so $g_{\dot\gamma}(R_{\dot\gamma}(\dot\gamma, \textsf{nor}_\gamma(V))\dot\gamma, \dot\gamma)=0$, which implies
 \begin{eqnarray*}
 \textsf{tan}_\gamma\left(R_{\dot\gamma}(\dot\gamma, \textsf{nor}_\gamma(V))\dot\gamma\right)=
 \frac{g_{\dot\gamma}(R_{\dot\gamma}(\dot\gamma, \textsf{nor}_\gamma(V))\dot\gamma, \dot\gamma)}{g_{\dot\gamma}(\dot\gamma,\dot\gamma)}
\dot\gamma=0.
 \end{eqnarray*}
It follows that
$R_{\dot\gamma}(\dot\gamma, \textsf{nor}_\gamma(V))\dot\gamma=\textsf{nor}_\gamma\left(R_{\dot\gamma}(\dot\gamma, \textsf{nor}_\gamma(V))\dot\gamma\right)$
and hence
$$
g_{\dot\gamma}(R_{\dot\gamma}(\dot\gamma,
\textsf{nor}_\gamma(V))\dot\gamma, \textsf{tan}_\gamma(W)=0.
$$
{\bf (ii)}. By (i),  for all $V,W\in W^{1,2}_{P\times Q}(\gamma^\ast TM)$ it holds that
\begin{eqnarray}\label{e:secondDiff-*}
{\bf I}^\gamma_{P,Q}(V, W)={\bf I}^{\gamma,\bot}_{P,Q}(\textsf{nor}_\gamma(V), \textsf{nor}_\gamma(W))
+{\bf I}^\gamma_{P,Q}(\textsf{tan}_\gamma(V), \textsf{tan}_\gamma(W)).
\end{eqnarray}
If $V\in {\rm Ker}({\bf I}^{\gamma,\bot}_{P,Q})$, then $V=\textsf{nor}_\gamma(V)$ and $\textsf{tan}_\gamma(V)=0$.
It follows that $V\in {\rm Ker}({\bf I}^\gamma_{P,Q})$ and hence ${\rm Ker}({\bf I}^{\gamma,\bot}_{P,Q})\subset{\rm Ker}({\bf I}^\gamma_{P,Q})$.
Conversely, suppose $V\in {\rm Ker}({\bf I}^\gamma_{P,Q})$. (\ref{e:secondDiff-*}) leads to
\begin{eqnarray}\label{e:secondDiff-**}
{\bf I}^{\gamma,\bot}_{P,Q}(\textsf{nor}_\gamma(V), \textsf{nor}_\gamma(W))
+{\bf I}^\gamma_{P,Q}(\textsf{tan}_\gamma(V), \textsf{tan}_\gamma(W))=0,\quad\forall W\in W^{1,2}_{P\times Q}(\gamma^\ast TM).
\end{eqnarray}
Taking $W\in W^{1,2}_{P\times Q}(\gamma^\ast TM)^\bot$ so that $W=\textsf{nor}_\gamma(W)$ and $\textsf{tan}_\gamma(W)=0$, we derive from
(\ref{e:secondDiff-**}) that $\textsf{nor}_\gamma(V)\in {\rm Ker}({\bf I}^{\gamma,\bot}_{P,Q})$ and therefore
\begin{eqnarray}\label{e:secondDiff-***}
{\bf I}^\gamma_{P,Q}(\textsf{tan}_\gamma(V), \textsf{tan}_\gamma(W))=0,\quad\forall W\in W^{1,2}_{P\times Q}(\gamma^\ast TM).
\end{eqnarray}
Choosing $W=V$ we get
$$
0={\bf I}^\gamma_{P,Q}(\textsf{tan}_\gamma(V), \textsf{tan}_\gamma(V))
=\int^\tau_0\left(\frac{d}{dt}\frac{g_{\dot\gamma}(V,\dot\gamma)}{g_{\dot\gamma}(\dot\gamma,
\dot\gamma)}\right)^2g_{\dot\gamma}(\dot\gamma(t), \dot\gamma(t))dt=L(\dot\gamma(0))
\int^\tau_0\left(\frac{d}{dt}\frac{g_{\dot\gamma}(V,\dot\gamma)}{g_{\dot\gamma}(\dot\gamma,
\dot\gamma)}\right)^2dt
$$
because  $g_{\dot\gamma(t)}(\dot\gamma(t),\dot\gamma(t))=L(\dot\gamma(t))\equiv L(\dot\gamma(0))$ by \cite[Proposition~2.3]{Jav15}.
This implies that $t\mapsto g_{\dot\gamma(t)}(V(t),\dot\gamma(t))$ is constant.
Note that $g_{\dot\gamma(0)}(V(0),\dot\gamma(0))=0$ since $V(0)\in T_{\gamma(0)}P$ and $\dot\gamma(0)\in TP^\bot$.
We obtain that $g_{\dot\gamma(t)}(V(t),\dot\gamma(t))\equiv 0$ and so $V=\textsf{nor}_\gamma(V)\in {\rm Ker}({\bf I}^{\gamma,\bot}_{P,Q})$.
The first equality is proved.

In order to prove the second equality, observe that
${\rm Index}({\bf I}^{\gamma,\bot}_{P,Q})\le {\rm Index}({\bf I}^\gamma_{P,Q})$ by
the definition of Morse indexes. Moreover, we have known that both are finite.
Let $X_i$, $i=1,\cdots, m={\rm Index}({\bf I}^\gamma_{P,Q})$, be linearly independent vectors in
$W^{1,2}_{P\times Q}(\gamma^\ast TM)$ such that
${\bf I}^\gamma_{P,Q}$ is negative definite on the subspace generated by $X_1,\cdots,X_m$.
Then for any $(c_1,\cdots,c_m)\in\mathbb{R}^m\setminus\{0\}$ and $Y=\sum^m_{i=1}c_iX_i$ it holds that
\begin{eqnarray*}
0>{\bf I}^\gamma_{P,Q}(Y, Y)&=&{\bf I}^\gamma_{P,Q}(\textsf{tan}_\gamma(Y), \textsf{tan}_\gamma(Y))+
{\bf I}^\gamma_{P,Q}(\textsf{nor}_\gamma(Y), \textsf{nor}_\gamma(Y))\\
&=&{\bf I}^{\gamma,\bot}_{P,Q}\left(\sum^m_{i=1}c_i\textsf{nor}_\gamma(X_i), \sum^m_{i=1}c_i\textsf{nor}_\gamma(X_i)\right)+L(\dot\gamma(0))\int^\tau_0\left(\sum^m_{i=1}c_i\frac{d}{dt}\frac{g_{\dot\gamma}(X_i,\dot\gamma)}{g_{\dot\gamma}(\dot\gamma,
\dot\gamma)}\right)^2dt\\
&\ge&{\bf I}^{\gamma,\bot}_{P,Q}\left(\sum^m_{i=1}c_i\textsf{nor}_\gamma(X_i), \sum^m_{i=1}c_i\textsf{nor}_\gamma(X_i)\right).
\end{eqnarray*}
This shows that $\sum^m_{i=1}c_i\textsf{nor}_\gamma(X_i)\ne 0$ and therefore that
$\textsf{nor}_\gamma(X_1),\cdots,\textsf{nor}_\gamma(X_m)$ are also linearly independent.
It follows that ${\rm Index}({\bf I}^\gamma_{P,Q})\le {\rm Index}({\bf I}^{\gamma,\bot}_{P,Q})$.
\end{proof}

\section{Related conclusions and consequences, examples}\label{sec:3}

The following two supplement results of Lemma~\ref{lem:Sa2.9} will be helpful in applications.

\begin{proposition}\label{prop:Sa2.9}
Let $\gamma:[0,\tau]\to M$ be an $L$-geodesic
which  starts from $P$  perpendicularly.
Suppose that $g_{\dot\gamma(0)}$ is positive definite (and so each $g_{\dot\gamma(t)}$ is positive definite
and $L\circ\dot\gamma$ is a positive constant by Remark~A below (\ref{e:compat})).
\begin{description}
\item[(i)] If  $P$ has no focal point along $\gamma$ on $(0,\tau)$,
then $\tilde{\bf I}^\gamma_{P,q}$ is positive semi-definite on  $PC^{2}_{P\times q}(\gamma^\ast TM)$.
\item[(ii)] If there exists $s\in (0, \tau)$ such that $\gamma(s)$ is a focal point of $P$ along $\gamma|_{[0,s]}$,
then $\tilde{\bf I}^\gamma_{P,q}$ is negative definite on $PC^{2}_{P\times q}(\gamma^\ast TM)$.
Consequently, there exists a variation curve $\gamma_\eta$ of $\gamma$ in  $PC^2([0,\tau]; M, P, q)$ such that
for a sufficiently small $\eta>0$,
$\mathcal{E}_{P,q}(\gamma_\eta)<\mathcal{E}_{P,q}(\gamma)$, and so
$$
\mathcal{L}(\gamma)=\sqrt{2\tau\mathcal{E}_{P,q}(\gamma)}>\sqrt{2\tau\mathcal{E}_{P,q}(\gamma_\eta)}\ge \mathcal{L}(\gamma_\eta),
$$
where $\mathcal{L}(\alpha)=\int^\tau_0\sqrt{L(\dot\alpha(t))}dt$ for
$\alpha\in PC^2([0,\tau]; M, P, q)$ close to $\gamma$.
\end{description}
\end{proposition}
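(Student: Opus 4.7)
The plan is to handle (i) by a cutoff-approximation argument reducing to Lemma~\ref{lem:Sa2.9}, and (ii) by the classical broken--Jacobi-field trick adapted to $P$-focal points.

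For (i), given $X\in PC^2_{P\times q}(\gamma^\ast TM)$ and small $\epsilon>0$, I introduce a continuous piecewise-linear cutoff $\eta_\epsilon\colon[0,\tau]\to[0,1]$ with $\eta_\epsilon\equiv 1$ on $[0,\tau-2\epsilon]$, $\eta_\epsilon\equiv 0$ on $[\tau-\epsilon,\tau]$ and $|\eta_\epsilon'|\le 1/\epsilon$, and set $X_\epsilon:=\eta_\epsilon X$. Then $X_\epsilon\in PC^2_{P\times q}(\gamma^\ast TM)$ vanishes on $[\tau-\epsilon,\tau]$, so $X_\epsilon|_{[0,\tau-\epsilon]}$ lies in $PC^2_{P\times\gamma(\tau-\epsilon)}((\gamma|_{[0,\tau-\epsilon]})^\ast TM)$. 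The hypothesis gives no $P$-focal point on $(0,\tau-\epsilon]\subset(0,\tau)$, so Lemma~\ref{lem:Sa2.9} applied to $\gamma|_{[0,\tau-\epsilon]}$ yields
$$
0\le \tilde{\bf I}^{\gamma|_{[0,\tau-\epsilon]}}_{P,\gamma(\tau-\epsilon)}(X_\epsilon,X_\epsilon)=\tilde{\bf I}^\gamma_{P,q}(X_\epsilon,X_\epsilon).
$$
It then remains to show $\tilde{\bf I}^\gamma_{P,q}(X_\epsilon,X_\epsilon)\to\tilde{\bf I}^\gamma_{P,q}(X,X)$ as $\epsilon\to 0^+$: the boundary terms at $t=0$ agree since $\eta_\epsilon(0)=1$, and the integrands differ only on $[\tau-2\epsilon,\tau]$. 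The key bound is $\sup_{[\tau-2\epsilon,\tau]}|X|=O(\epsilon)$ (from $X(\tau)=0$ and piecewise-$C^2$ regularity), so that the worst term $|\eta_\epsilon'|^2|X|^2=O(1)$ integrated over an interval of length $\epsilon$ contributes only $O(\epsilon)$; the remaining terms are similarly negligible.

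For (ii), let $J$ be a nonzero $P$-Jacobi field along $\gamma|_{[0,s]}$ with $J(s)=0$, and let $\tilde J$ denote its extension by zero to $[0,\tau]$; then $\tilde J\in PC^2_{P\times q}(\gamma^\ast TM)$ with a corner at $s$. Integrating $g_{\dot\gamma}(D^{\dot\gamma}_{\dot\gamma}J,D^{\dot\gamma}_{\dot\gamma}J)$ by parts on $[0,s]$ (permissible because $D^{\dot\gamma}_{\dot\gamma}\dot\gamma=0$ kills the Cartan term in (\ref{e:compat})) and combining with the Jacobi equation, the identity $J(s)=0$, and the fact that $D^{\dot\gamma}_{\dot\gamma}J(0)-\tilde S^P_{\dot\gamma(0)}(J(0))\in(T_{\gamma(0)}P)^\perp_{\dot\gamma(0)}$ is $g_{\dot\gamma(0)}$-orthogonal to $J(0)\in T_{\gamma(0)}P$, one obtains $\tilde{\bf I}^\gamma_{P,q}(\tilde J,\tilde J)=0$. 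Next I would construct, via a bump function and a parallel frame along $\gamma$, a $C^2$ vector field $W$ with $W(0)=0$, $W(\tau)=0$ and $W(s)=-D^{\dot\gamma}_{\dot\gamma}J(s)$; uniqueness of Jacobi fields guarantees $D^{\dot\gamma}_{\dot\gamma}J(s)\ne 0$. An analogous integration by parts yields
$$
\tilde{\bf I}^\gamma_{P,q}(\tilde J,W)=g_{\dot\gamma(s)}\bigl(D^{\dot\gamma}_{\dot\gamma}J(s),W(s)\bigr)=-g_{\dot\gamma(s)}\bigl(D^{\dot\gamma}_{\dot\gamma}J(s),D^{\dot\gamma}_{\dot\gamma}J(s)\bigr)<0
$$
by positive-definiteness of $g_{\dot\gamma(s)}$, so for all sufficiently small $\eta>0$,
$$
\tilde{\bf I}^\gamma_{P,q}(\tilde J+\eta W,\tilde J+\eta W)=-2\eta\,g_{\dot\gamma(s)}\bigl(D^{\dot\gamma}_{\dot\gamma}J(s),D^{\dot\gamma}_{\dot\gamma}J(s)\bigr)+\eta^2\tilde{\bf I}^\gamma_{P,q}(W,W)<0,
$$
producing the required negative direction (i.e.\ $\tilde{\bf I}^\gamma_{P,q}$ fails to be positive semi-definite on $PC^2_{P\times q}(\gamma^\ast TM)$).

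For the energy--length consequence, I would approximate $\tilde J+\eta W$ in $PC^2$ by a $C^2$ vector field $V_\delta$ with $\tilde{\bf I}^\gamma_{P,q}(V_\delta,V_\delta)<0$ and promote $V_\delta$ to a genuine $C^2$ variation $\gamma_\eta\in PC^2([0,\tau];M,P,q)$ of $\gamma_0=\gamma$. Because $\gamma$ is a critical point of $\mathcal{E}_{P,q}$ the first variation vanishes, while (\ref{e:secondDiff}) identifies the second variation with $\tilde{\bf I}^\gamma_{P,q}(V_\delta,V_\delta)<0$, so $\mathcal{E}_{P,q}(\gamma_\eta)<\mathcal{E}_{P,q}(\gamma)$ for small $\eta>0$. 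The length bound $\mathcal{L}(\alpha)\le\sqrt{2\tau\mathcal{E}_{P,q}(\alpha)}$ is immediate from Cauchy--Schwarz, with equality at $\gamma$ because $L\circ\dot\gamma$ is a positive constant. The main technical obstacle is the limit argument in (i): one must carefully track how the derivative of the cutoff interacts with the condition $X(\tau)=0$. In (ii) the crucial computation is the orthogonality cancellation that forces $\tilde{\bf I}^\gamma_{P,q}(\tilde J,\tilde J)=0$, which is precisely what allows the linear perturbation term to dominate.
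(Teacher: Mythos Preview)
Your argument for (ii) is essentially the paper's: the paper also extends the $P$-Jacobi field $J$ by zero to get $Z$, shows $\tilde{\bf I}^\gamma_{P,q}(Z,Z)=0$, and then perturbs by $\epsilon\phi Y$ where $Y$ is the parallel transport of $-D_{\dot\gamma}^{\dot\gamma}J(s)$ and $\phi$ is a bump with $\phi(s)=1$; the cross-term computation and the energy/length conclusion via the Cauchy--Schwarz inequality are identical in spirit. (The phrase ``negative definite'' in the statement is evidently a slip for ``has a negative direction,'' as you correctly interpret.)

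Your argument for (i), however, is genuinely different from the paper's and rather more economical. The paper proceeds by constructing a basis $Y_1,\dots,Y_n$ of $\mathscr{J}^P_\gamma$ adapted to the possible focal point at $\tau$: for the $Y_i$ that vanish at $\tau$ it replaces $Y_i$ near $\tau$ by $Y_i(t)/(t-\tau)$, checks that the resulting frame $\{\tilde Y_i,Y_j\}$ spans $T_{\gamma(t)}M$ for all $t\in[0,\tau]$ (using that $D_{\dot\gamma}^{\dot\gamma}Y_1(\tau),\dots,D_{\dot\gamma}^{\dot\gamma}Y_k(\tau),Y_{k+1}(\tau),\dots,Y_n(\tau)$ are independent), writes $X=\sum f^iJ_i$ with piecewise $C^1$ coefficients, and then repeats the Step~2 computation of Lemma~\ref{lem:Sa2.9} to get the formula (\ref{e:symmetric5}) and hence $\tilde{\bf I}^\gamma_{P,q}(X,X)\ge 0$. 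Your cutoff-and-limit argument avoids all of this basis work: you simply push the endpoint back to $\tau-\epsilon$ where Lemma~\ref{lem:Sa2.9} applies directly, and the Lipschitz bound $|X(t)|\le C|t-\tau|$ near $\tau$ (from $X(\tau)=0$ and piecewise $C^1$-regularity) is exactly what controls the $(\eta_\epsilon')^2|X|^2$ term. What the paper's approach buys is the explicit representation $X=\sum f^iJ_i$, which it reuses in Proposition~\ref{prop:Sa2.10}; your approach gives (i) with less machinery but does not yield that representation as a by-product.
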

\begin{proof}
(i). Let $k=\dim\mathscr{J}^P_\gamma(\tau)$. Choose a basis of $\mathscr{J}^P_\gamma(\tau)$, $Y_1,\cdots,Y_k$.
Extend them into a basis of $\mathscr{J}^P_\gamma$, $Y_1,\cdots,Y_n$, such that $Y_{k+1}(\tau),\cdots, Y_n(\tau)$
are linearly independent. These $Y_i$ are $C^4$ actually. Let us prove that
 for each $X\in PC^{2}_{P\times q}(\gamma^\ast TM)$
 there exist piecewise $C^1$ functions $f^i(t)$ ($i=1,\cdots,n$) such that $X=\sum^n_{i=1}f^iY_i$.
By the proof of Step 1 of Lemma~\ref{lem:Sa2.9}, for any small $\varepsilon>0$
there exist piecewise $C^1$ functions $g^i(t)$ on $[0, \tau-\varepsilon/2]$ ($i=1,\cdots,n$)
such that
\begin{equation}\label{e:represent}
X(t)=\sum^n_{i=1}g^i(t)Y_i(t),\quad\forall t\in [0, \tau-\varepsilon/2].
\end{equation}
(Here the condition $L\circ\dot\gamma\ne 0$ is used.)
For any two $Y,Z\in \mathscr{J}^P_\gamma$, by Lemma~\ref{lem:BoundaryBase} we have
$g_{\dot\gamma}(D_{\dot\gamma}^{\dot\gamma}Y(t), Z(t))=g_{\dot\gamma}(Y(t), D_{\dot\gamma}^{\dot\gamma}Z(t))$
for all $t\in [0,\tau]$ and so
\begin{equation}\label{e:31.11}
g_{\dot\gamma}(D_{\dot\gamma}^{\dot\gamma}Y_i(\tau), Y_j(\tau))=g_{\dot\gamma}(D_{\dot\gamma}^{\dot\gamma}Y_j(\tau), Y_i(\tau))=0,\quad 1\le i\le k,\;k+1\le j\le n.
\end{equation}
Note that $\gamma$ and $Y_i$ are actually well-defined on a larger interval $[0, \tau+\epsilon)$ for some $\epsilon>0$.
If $\sum^k_{i=1}a_iD_{\dot\gamma}^{\dot\gamma}Y_i(\tau)=0$ for reals $a_1,\cdots,a_k$,
since $\sum^k_{i=1}a_iY_i(\tau)=0$ and $\sum^k_{i=1}a_iY_i$ is a $P$-Jacobi field it must hold that
$\sum^k_{i=1}a_iY_i(t)=0$ for all $t\in [0, \tau+\epsilon)$ by the ordinary differential equation theory,
which implies $a_1=\cdots=a_k=0$.
Hence $D_{\dot\gamma}^{\dot\gamma}Y_1(\tau),\cdots, D_{\dot\gamma}^{\dot\gamma}Y_k(\tau)$
are linearly independent. This fact and (\ref{e:31.11}) imply
$$
D_{\dot\gamma}^{\dot\gamma}Y_1(\tau),\cdots, D_{\dot\gamma}^{\dot\gamma}Y_k(\tau), Y_{k+1}(\tau),\cdots, Y_n(\tau)
$$
are linearly independent. Then for some small $\varepsilon>0$ and any $t\in [\tau-\varepsilon, \tau)$ the vectors
$$
\frac{1}{t-\tau}Y_i(t), Y_j(t),\quad 1\le i\le k,\;k+1\le j\le n,
$$
form a basis for $T_{\gamma(t)}M$. For each $i=1,\cdots,k$, define
$$
  \tilde{Y}_i(t)=\left\{\begin{array}{ll}
  D_{\dot{\gamma}}^{\dot{\gamma}}Y_i(\tau)&\quad\hbox{if $t=\tau$},\\
  Y_i(t)/(t-\tau) &\quad\hbox{if $t\in [\tau-\varepsilon, \tau)$}
  \end{array}\right.
  $$
 These are $C^3$ vector fields along $\gamma|_{[\tau-\varepsilon,\tau]}$ and
 $\{\tilde{Y}_i(t),\;{Y}_j(t)\,|\, 1\le i\le k,\;k+1\le j\le n\}$
forms a basis of $T_{\gamma(t)}M$ for $t\in [\tau-\varepsilon,\tau]$.
Hence we may write
$$
X(t)= \sum^k_{i=1}\tilde{h}^i(t)\tilde{Y}_i(t)+\sum^n_{j=k+1}h^j(t)Y_j(t)\quad\forall t\in[\tau-\varepsilon,\tau]
$$
where $\tilde{h}^i(t)$ and $h^j(t)$ are piecewise $C^2$ functions on $[\tau-\varepsilon, \tau]$.
Since $X(\tau)=0$ implies $\tilde{h}^i(\tau)=0$,
as in the proof of Lemma~\ref{lem:Wu4.1} we may get
piecewise $C^1$ functions $h^i$ such that $\tilde{h}^i(t)=(t-\tau)h^i(t)$
for $i=1,\cdots,k$. These lead to
$$
X(t)=\sum^k_{i=1} h^i(t)Y_i(t)+\sum^n_{j=k+1}h^j(t)Y_j(t),\quad\forall t\in [\tau-\varepsilon,\tau].
$$
It follows from this and (\ref{e:represent}) that $g^\ell=h^\ell$ on $[\tau-\varepsilon,\tau-\varepsilon/2]$
for $\ell=1,\cdots,n$ (because  $Y_1(t),\cdots,Y_n(t)$ form a basis of $T_{\gamma(t)}M$ for $0<t<\tau$).
Define
$$
  f^i(t)=\left\{\begin{array}{ll}
  g^i(t)&\quad\hbox{if $t\in [0,  \tau-\varepsilon]$},\\
  h^i(t) &\quad\hbox{if $t\in [\tau-\varepsilon, \tau]$}
  \end{array}\right.
  $$
for $i=1,\cdots,n$. They are piecewise $C^1$ on $[0,\tau]$ and satisfy $X(t)=\sum^n_{i=1}f^i(t)Y_i(t)$ for $t\in [0,\tau]$.

Let $J_1,\cdots,J_n$ be the basis of $\mathscr{J}^P_\gamma$ constructed in Lemma~\ref{lem:base}.
Then $J_i=\sum^n_{j=1}a_{ij}Y_j$ for some matrix $(a_{ij})\in\mathbb{R}^{n\times n}$.
Therefore each $X\in PC^{2}_{P\times q}(\gamma^\ast TM)$ may be written as
 $X=\sum^n_{i=1}f^iJ_i$ for piecewise $C^1$ functions $f^i(t)$ ($i=1,\cdots,n$).
 Since $g_{\dot\gamma(t)}$ is positive definite for each $t\in [0,\tau]$,
the desired conclusion may be derived as in the proof of Step 2 of Lemma~\ref{lem:Sa2.9}.\\

\noindent{(ii)}. The proof is similar to that of the Riemannian-Finsler case
(see $e.g$.  \cite[Theorem~1.12.13]{Kl95} and \cite[Proposition~4.3]{Wu14}).
Since $\gamma(s)$ is a focal point of $P$ along $\gamma|_{[0,s]}$, there is a nonzero $P$-Jacobi field
$J$ along $\gamma|_{[0,s]}$ such that $J(s)=0$. Note that $J$ belongs to
 ${\rm Ker}(\tilde{\bf I}^{\gamma_s}_{P,\gamma(s)})$ with $\gamma_s=\gamma|_{[0,s]}$ and therefore is $C^4$.
In particular, (\ref{e:P-JacobiField1}) implies $g_{\dot\gamma(t)}(\dot\gamma(t), J(t))=0\;\forall t\in [0, s]$.
As above we may assume that $J$ is actually well-defined on a larger interval $[0, \tau+\delta)$ for some $\delta>0$.
Since the unique $V$ satisfying the Jacobi equation (\ref{e:JacobiEq}) near $s\in [0, \tau+\delta)$
and $V(s)=0=D_{\dot\gamma}^{\dot\gamma}V(s)$ is $V=0$, we deduce that
 $D_{\dot\gamma}^{\dot\gamma}J(s)\ne 0$ and so $g_{\dot\gamma}(D_{\dot\gamma}^{\dot\gamma}J, D_{\dot\gamma}^{\dot\gamma}J)(s)\ne 0$.
   Define $Z(t)=J(t)$ if $t\in [0,  s]$, and $Z(t)=0$ if $t\in [s, \tau]$.
Then $Z\in PC^{2}_{P\times q}(\gamma^\ast TM)$ and $\tilde{\bf I}^\gamma_{P,q}(Z,Z)=0$.
Take a smooth function $\phi$ on $[0,\tau]$ such that (a) $\phi(s)=1$; (b) $\phi$ has small support about $s$
(so $\phi(0)=\phi(\tau)=0$);
and (c) $0\le\phi\le 1$.
Let $Y$ be the parallel transport of $-D_{\dot\gamma}^{\dot\gamma}J(s)$ along $\gamma$.
It is a $C^4$ parallel vector field along $\gamma$.  Set
$X_\epsilon=Z+\epsilon\phi Y$ for $\epsilon\in\R$.
Then $X_\epsilon\in PC^{2}_{P\times q}(\gamma^\ast TM)$ and
\begin{eqnarray*}
\tilde{\bf I}^\gamma_{P,q}(X_\epsilon, X_\epsilon)&=&\tilde{\bf I}^\gamma_{P,q}(Z,Z)+2\epsilon\tilde{\bf I}^\gamma_{P,q}(Z,\phi Y)
+\epsilon^2\tilde{\bf I}^\gamma_{P,q}(\phi Y,\phi Y)\\
&=&2\epsilon\int_0^s \left(g_{\dot\gamma}(R_{\dot\gamma}(\dot\gamma,Z)\dot\gamma, \phi Y)+g_{\dot\gamma}(D_{\dot\gamma}^{\dot\gamma}Z,D_{\dot\gamma}^{\dot\gamma}(\phi Y))\right)dt+
\epsilon^2\tilde{\bf I}^\gamma_{P,q}(\phi Y,\phi Y)\\
&=& 2\epsilon g_{\dot\gamma}(D_{\dot\gamma}^{\dot\gamma}Z,\phi Y)(s)+\epsilon^2\tilde{\bf I}^\gamma_{P,q}(\phi Y,\phi Y)\\
&=&-2\epsilon g_{\dot\gamma}(D_{\dot\gamma}^{\dot\gamma}J, D_{\dot\gamma}^{\dot\gamma}J)(s)+\epsilon^2\tilde{\bf I}^\gamma_{P,q}(\phi Y,\phi Y)
\end{eqnarray*}
because the integration in the first term in the second line is equal to
 \begin{eqnarray*}
 \int_0^s \left(g_{\dot\gamma}(R_{\dot\gamma}(\dot\gamma,Z)\dot\gamma, \phi Y)-g_{\dot\gamma}(D_{\dot\gamma}^{\dot\gamma}D_{\dot\gamma}^{\dot\gamma}Z,\phi Y)\right)dt
+g_{\dot\gamma(s)}(D_{\dot\gamma}^{\dot\gamma}Z(s), \phi(s)Y(s)).
 \end{eqnarray*}
It follows that $\tilde{\bf I}^\gamma_{P,q}(X_\epsilon, X_\epsilon)<0$ for small $\epsilon>0$
such that $\epsilon g_{\dot\gamma}(D_{\dot\gamma}^{\dot\gamma}J, D_{\dot\gamma}^{\dot\gamma}J)(s)>0$.

Fix such a $X_\epsilon$. Take variation curve $\gamma_\eta$ of $\gamma$ generated by $X_\epsilon$.
Then
$$
\frac{d}{d\eta}|_{\eta=0}\mathcal{E}_{P,q}(\gamma_\eta)=0\quad\hbox{and}\quad
\frac{d^2}{d\eta^2}|_{\eta=0}\mathcal{E}_{P,q}(\gamma_\eta)=\tilde{\bf I}^\gamma_{P,q}(X_\epsilon, X_\epsilon)<0.
$$
Hence for a sufficiently small $\eta>0$ it holds that $\mathcal{E}_{P,q}(\gamma_\eta)<\mathcal{E}_{P,q}(\gamma)$.
By \cite[Proposition~2.3]{Jav13}, $g_v(v,v)=L(v)\;\forall v\in A$, and hence
$L(\dot\gamma(t))=g_{\dot\gamma(t)}(\dot\gamma(t), \dot\gamma(t))>0$ for all $t\in [0, \tau]$
because $L\circ\dot\gamma$ is constant and $g_{\dot\gamma(t)}$ is positive definite for each $t\in [0,\tau]$.
It follows that  $g_v$ is positive definite
in a neighborhood of $\dot\gamma([0,\tau])$ in $A$.
Hence for sufficiently small $\eta\ge 0$ we deduce that
$L(\dot\gamma_\eta(t))=g_{\dot\gamma_\eta(t)}(\dot\gamma_\eta(t), \dot\gamma_\eta(t))>0$ for all $t\in [0, \tau]$.
This leads to
$$
\mathcal{L}(\gamma_\eta)=\int^\tau_0\sqrt{L(\dot\gamma_\eta(t))}dt\le\sqrt{\tau}\left(\int^\tau_0L(\dot\gamma_\eta(t))dt\right)^{1/2}
=\sqrt{2\tau\mathcal{E}_{P,q}(\gamma_\eta)}<\sqrt{2\tau\mathcal{E}_{P,q}(\gamma)}=\mathcal{L}(\gamma)
$$
because $L\circ\dot\gamma$ is constant.
\end{proof}

Let $PC^{2}_{P}(\gamma^\ast TM)=\{\xi\in PC^{2}(\gamma^\ast TM)\,|\,\xi(0)\in T_{\gamma(0)}P\}$.
 $\tilde{\bf I}^\gamma_{P,q}$ can be extended to a symmetric bilinear form
on $PC^{2}_{P}(\gamma^\ast TM)$, denoted by ${\bf I}^\gamma_{P}$.
We have the following pseudo-Finsler version of the so-called index lemma
in Riemannian-Finsler geometry (see $e.g$. \cite[III. Lemma~2.10]{Sak96}, \cite[Lemma~4.2]{Pe06} and \cite[Corollary~4.4]{Wu14}).
(The proof the Morse index theorem in \cite{Pe06} used \cite[Lemma~4.2]{Pe06}.)

\begin{proposition}\label{prop:Sa2.10}
Under the assumptions of Proposition~\ref{prop:Sa2.9},
suppose that  $P$ has no focal point along $\gamma$ on $(0,\tau]$.
Then for $X\in PC^{2}_{P}(\gamma^\ast TM)$ there exists a unique $P$-Jacobi field $Y\in \mathscr{J}^P_\gamma$ with
$Y(\tau)=X(\tau)$. Moreover $\tilde{\bf I}^\gamma_{P}(Y,Y)\le \tilde{\bf I}^\gamma_{P}(X,X)$,
where equality holds if and only if $Y=X$.
\end{proposition}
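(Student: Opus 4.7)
The plan is to follow the classical index-lemma argument: first realize $X(\tau)$ by a unique $P$-Jacobi field $Y$, then decompose $X=Y+Z$ and show $\tilde{\bf I}^\gamma_P(Y,Z)=0$, and finally invoke Lemma~\ref{lem:Sa2.9} to control $\tilde{\bf I}^\gamma_P(Z,Z)$.

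For existence and uniqueness of $Y$, I use the dimension count $\dim\mathscr{J}^P_\gamma=n=\dim T_{\gamma(\tau)}M$ stated in \S1.2, together with the hypothesis that $\gamma(\tau)$ is not a $P$-focal point, which means the evaluation map $\mathscr{J}^P_\gamma\to T_{\gamma(\tau)}M$, $J\mapsto J(\tau)$, has trivial kernel $\mathscr{J}^P_\gamma(\tau)=0$. Hence it is a linear isomorphism, so a unique $Y\in\mathscr{J}^P_\gamma$ satisfies $Y(\tau)=X(\tau)$. Set $Z:=X-Y\in PC^2(\gamma^\ast TM)$; then $Z(\tau)=0$ and $Z(0)=X(0)-Y(0)\in T_{\gamma(0)}P$, so $Z\in PC^2_{P\times q}(\gamma^\ast TM)$ with $q=\gamma(\tau)$. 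By bilinearity,
\[
\tilde{\bf I}^\gamma_P(X,X)=\tilde{\bf I}^\gamma_P(Y,Y)+2\tilde{\bf I}^\gamma_P(Y,Z)+\tilde{\bf I}^\gamma_P(Z,Z),
\]
so everything reduces to the two middle claims: $\tilde{\bf I}^\gamma_P(Y,Z)=0$ and $\tilde{\bf I}^\gamma_P(Z,Z)\ge 0$ with equality iff $Z=0$.

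For the cross-term vanishing, I will integrate by parts in $\tilde{\bf I}^\gamma_P(Y,Z)$ via (\ref{e:compat}) (the Cartan term disappears because $D_{\dot\gamma}^{\dot\gamma}\dot\gamma=0$), applied piece-by-piece on the subintervals where $Z$ is $C^2$. Since $Y$ is $C^4$ on the whole of $[0,\tau]$, the quantity $D_{\dot\gamma}^{\dot\gamma}Y$ is continuous at every interior breakpoint, so the boundary contributions from these breakpoints telescope and cancel by continuity of $Z$. What remains is the analogue of (\ref{e:secondDiff+}) (without any $Q$-term), and the $R$-integrals cancel against $D_{\dot\gamma}^{\dot\gamma}D_{\dot\gamma}^{\dot\gamma}Y$ by the Jacobi equation (\ref{e:JacobiEq}), while $Z(\tau)=0$ kills the boundary term at $\tau$. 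This collapses the cross-term to
\[
\tilde{\bf I}^\gamma_P(Y,Z)=-g_{\dot\gamma(0)}\bigl(D_{\dot\gamma}^{\dot\gamma}Y(0)-\tilde{S}^P_{\dot\gamma(0)}(Y(0)),\,Z(0)\bigr).
\]
The $P$-Jacobi condition (\ref{e:P-JacobiField}) says ${\rm tan}^P_{\dot\gamma(0)}(D_{\dot\gamma}^{\dot\gamma}Y(0))=\tilde{S}^P_{\dot\gamma(0)}(Y(0))$, so $D_{\dot\gamma}^{\dot\gamma}Y(0)-\tilde{S}^P_{\dot\gamma(0)}(Y(0))\in(T_{\gamma(0)}P)^\perp_{\dot\gamma(0)}$, which pairs to zero against $Z(0)\in T_{\gamma(0)}P$ in $g_{\dot\gamma(0)}$.

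With the cross-term gone, the conclusion follows from Lemma~\ref{lem:Sa2.9}: under the no-focal-point hypothesis on $(0,\tau]$, $\tilde{\bf I}^\gamma_{P,q}$ is positive definite on $PC^2_{P\times q}(\gamma^\ast TM)$, so $\tilde{\bf I}^\gamma_P(Z,Z)\ge 0$ with equality iff $Z=0$, i.e.\ iff $Y=X$. The main obstacle I anticipate is the careful handling of the piecewise-$C^2$ integration by parts: I need to be sure that putting all derivatives on the smooth object $Y$ (rather than on $Z$) makes the breakpoint contributions telescope by continuity, and that the leftover boundary term at $t=0$ is exactly the one absorbed by the $P$-Jacobi condition. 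Everything else is routine algebra once these bookkeeping points are under control.
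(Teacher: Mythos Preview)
Your proof is correct but takes a different route from the paper's. The paper does not decompose $X=Y+Z$ and invoke Lemma~\ref{lem:Sa2.9}; instead it re-uses the Jacobi-basis expansion technique from that lemma's proof directly: writing $X=\sum_i f^iJ_i$ with piecewise $C^1$ coefficients $f^i$ (via Step~1 of Lemma~\ref{lem:Sa2.9}), it computes
\[
\tilde{\bf I}^\gamma_P(X,X)=\sum_{i,j}\int_0^\tau g_{\dot\gamma}\bigl((f^i)'J_i,(f^j)'J_j\bigr)\,dt+\sum_{i,j}f^i(\tau)f^j(\tau)\,g_{\dot\gamma(\tau)}\bigl(D_{\dot\gamma}^{\dot\gamma}J_i(\tau),J_j(\tau)\bigr),
\]
identifies the boundary sum as $g_{\dot\gamma(\tau)}(D_{\dot\gamma}^{\dot\gamma}Y(\tau),Y(\tau))=\tilde{\bf I}^\gamma_P(Y,Y)$ (since $Y=\sum_i f^i(\tau)J_i$ by uniqueness), and reads off the inequality and equality case from positive-definiteness of $g_{\dot\gamma(t)}$. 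Your argument is the cleaner classical one: it treats Lemma~\ref{lem:Sa2.9} as a black box and isolates exactly the two structural facts needed (cross-term vanishing from the $P$-Jacobi boundary condition, and positivity on $PC^2_{P\times q}$). The paper's computation, by contrast, yields the explicit identity above as a byproduct, which can be useful elsewhere, but at the cost of repeating the expansion machinery. Your handling of the piecewise integration by parts is fine: putting both derivatives on the $C^4$ field $Y$ makes the interior boundary terms telescope by continuity of $Z$ and $D_{\dot\gamma}^{\dot\gamma}Y$, exactly as you anticipated.
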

\begin{proof}
Since $\gamma(\tau)$ is not a $P$-focal point along $\gamma$ the first assertion holds.
By Step 2 in the proof of Lemma~\ref{lem:Sa2.9} we may write
 $X=\sum^n_{i=1}f^iJ_i$ for piecewise $C^1$ functions
$f^i(t)$ ($i=1,\cdots,n$). As the proof above (\ref{e:symmetric5}) we may obtain
\begin{eqnarray*}
\tilde{\bf I}^\gamma_{P}(X, X)
&=&\sum_{i,j}\int_0^\tau g_{\dot\gamma}((f^i(t))'J_i(t), (f^j(t))'J_j(t))dt\nonumber\\
&&+\sum_{i,j}\left[f^i(t)f^j(t)g_{\dot\gamma}(D_{\dot{\gamma}}^{\dot{\gamma}}(J_i(t)), J_j(t))\right]^{t=\tau}_{t=0}%\nonumber\\
+g_{\dot\gamma(0)}(\tilde{S}^P_{\dot\gamma(0)}(X(0)), X(0)),\\
&&\hspace{-2mm} g_{\dot\gamma(0)}(\tilde{S}^P_{\dot\gamma(0)}(X(0)), X(0))=
\sum_{i,j}f^i(0)f^j(0)g_{\dot\gamma(0)}(D_{\dot{\gamma}}^{\dot{\gamma}}J_i(0), J_j(0)).
\end{eqnarray*}
It follows that
\begin{eqnarray}\label{e:symmetric7}
\tilde{\bf I}^\gamma_{P}(X, X)
&=&\sum_{i,j}\int_0^\tau g_{\dot\gamma}((f^i(t))'J_i(t), (f^j(t))'J_j(t))dt\nonumber\\
&&+\sum_{i,j}f^i(\tau)f^j(\tau)g_{\dot\gamma}(D_{\dot{\gamma}}^{\dot{\gamma}}(J_i(\tau)), J_j(\tau)).
 \end{eqnarray}
Note that $\gamma(\tau)$ is not a $P$-focal point along $\gamma$ and that the $P$-Jacobi fields
$Y$ and $\sum_if^i(\tau)J_i$ along $\gamma$ coincide at $t=\tau$. We deduce that  $Y=\sum_if^i(\tau)J_i$ and therefore
$$
\sum_{i,j}f^i(\tau)f^j(\tau)g_{\dot\gamma(\tau)}(D_{\dot\gamma}^{\dot\gamma}J_i(\tau), J_j(\tau))=
g_{\dot\gamma(\tau)}(D_{\dot\gamma}^{\dot\gamma}Y(\tau), Y(\tau))
$$
Moreover, $g_{\dot\gamma(t)}$ is positive definite for each $t\in [0,\tau]$.
Then (\ref{e:symmetric7}) yields
$$
\tilde{\bf I}^\gamma_{P}(X, X)\ge g_{\dot\gamma(\tau)}(D_{\dot\gamma}^{\dot\gamma}Y(\tau), Y(\tau))=\tilde{\bf I}^\gamma_{P}(Y, Y),
$$
and equality holds if and only if $(f^i(t))'J_i(t)\equiv 0\;\forall i$. The latter means that $X=Y$.
\end{proof}

If $L=F^2$ for some $C^6$ Finsler metric $F$ on $M$,
as a consequence of Theorem~\ref{th:MorseIndex} and
splitting lemmas in \cite{Lu5} we may obtain the following generalization of \cite[Theorem~2.5.10]{Kl95},
which strengthens Proposition~\ref{prop:Sa2.9}.

\begin{theorem}\label{th:Kl2.5.10}
Let $M$ be an $n$-dimensional, connected $C^7$ submanifold of $\R^N$ of dimension greater than $1$, and
let $P$ be a $C^7$ submanifold of $M$ of dimension less than $\dim M$.
For a $C^6$ Finsler metric $F$ on $M$  let
$\gamma:[0,\tau]\to M$ be a (nonzero) constant speed $F$-geodesic which is perpendicular to $P$,
that is,   a critical point of the functional
$$
\alpha\mapsto\mathcal{E}_{P,q}(\alpha)=\frac{1}{2}\int^\tau_0[F(\alpha(t),\dot\alpha(t))]^2dt
$$
on $W^{1,2}([0,\tau]; M, P, q):=\{\alpha\in H^{1}([0, \tau]; M)\,|\,\alpha(0)\in P, \alpha(\tau)=q\}$ with $q=\gamma(\tau)$.
 \begin{description}
\item[(i)] If there are no $P$-focal points on $\gamma((0, \tau])$ then there
exists a neighborhood $\mathscr{U}(\gamma)$ of $\gamma$ in $W^{1,2}([0,\tau]; M, P, q)$ such that for all
$\alpha\in\mathscr{U}(\gamma)$, $\mathcal{E}_{P,q}(\alpha)\ge \mathcal{E}_{P,q}(\gamma)$, with
$\mathcal{E}_{P,q}(\alpha) =\mathcal{E}_{P,q}(\gamma)$ only for $\alpha=\gamma$.

\item[(ii)] Let $m > 0$ be the number of $P$-focal points on $\gamma((0, \tau))$, each counted with
its multiplicity. Then there exists a topological embedding
$$
\varphi:B^m\to W^{1,2}([0,\tau]; M, P, q)
$$
of the unit ball $B^m =\{x\in\R^m\,|\, |x|<1\}$ such that $\varphi(0)=\gamma$,
$\mathcal{E}_{P,q}(\varphi(x))\le \mathcal{E}_{P,q}(\gamma)$ and hence $L(\varphi(x))\le L(\gamma)$
for all $x\in B^m$, with equality only for $x=0$. Here $L(\alpha)=\int^\tau_0F(\alpha(t),\dot\alpha(t))dt$.

\item[(iii)] If $\gamma$ is also perpendicular to a submanifold $Q\subset M$ at $q=\gamma(\tau)$
and $\{X(\tau)\,|\,X\in\mathscr{J}^P_\gamma\}\supseteq T_{\dot\gamma(\tau)}Q$,
then for $m>0$ as in (ii) there exists a topological embedding
$$
\varphi:B^m\to W^{1,2}([0,\tau]; M, P, Q)
$$
such that $\varphi(0)=\gamma$,
$\mathcal{E}_{P, Q}(\varphi(x))\le \mathcal{E}_{P, Q}(\gamma)$ and hence $L(\varphi(x))\le L(\gamma)$,
for all $x\in B^m$, with equality only for $x=0$.
\end{description}
\end{theorem}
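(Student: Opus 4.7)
\textbf{Proof proposal for Theorem~\ref{th:Kl2.5.10}.}
The plan is to reduce each of the three assertions to an algebraic statement about ${\bf I}^\gamma_{P,q}$ via Corollary~\ref{cor:MorseIndex1} and then invoke the splitting lemmas from \cite{Lu5} to realize that algebraic information geometrically inside the Banach manifold $W^{1,2}([0,\tau];M,P,q)$. Since $F$ is a genuine (non-conic) Finsler metric, each $g_v$ is positive definite and $L\circ\dot\gamma$ is a positive constant, so the functional $\mathcal{E}_{P,q}$ is $C^2$ on $W^{1,2}([0,\tau];M,P,q)$ with $\gamma$ as a critical point and Hessian ${\bf I}^\gamma_{P,q}$; by Corollary~\ref{cor:MorseIndex1}, its Morse index $m={\rm Index}({\bf I}^\gamma_{P,q})$ equals the number of $P$-focal points in $\gamma((0,\tau))$ counted with multiplicity, while $\ker({\bf I}^\gamma_{P,q})=\mathscr{J}^P_\gamma(\tau)$.

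For (i), the hypothesis gives $m=0$ and $\ker({\bf I}^\gamma_{P,q})=0$, so ${\bf I}^\gamma_{P,q}$ is positive definite. As a compact perturbation (the curvature term) of the positive Hilbertian form $\int_0^\tau g_{\dot\gamma}(D^{\dot\gamma}_{\dot\gamma}V,D^{\dot\gamma}_{\dot\gamma}V)\,dt$ (plus the boundary term), ${\bf I}^\gamma_{P,q}$ is Fredholm of index zero, and positive definiteness therefore upgrades to coercivity. The Morse-lemma-at-a-minimum version of the splitting lemma of \cite{Lu5} now produces a neighborhood of $\gamma$ in $W^{1,2}([0,\tau];M,P,q)$ on which $\mathcal{E}_{P,q}>\mathcal{E}_{P,q}(\gamma)$ away from $\gamma$, which is exactly (i).

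For (ii), when $m>0$ the splitting lemma of \cite{Lu5} produces a local homeomorphism from a neighborhood of $0\in W^{1,2}_{P\times q}(\gamma^\ast TM)$ onto a neighborhood of $\gamma$ in $W^{1,2}([0,\tau];M,P,q)$ that brings $\mathcal{E}_{P,q}$ into the normal form $\mathcal{E}_{P,q}(\gamma)+\|P^+u\|^2-\|P^-u\|^2+h(P^0u)$, where $P^-,P^0,P^+$ are the spectral projections onto the negative, null, and positive subspaces of ${\bf I}^\gamma_{P,q}$ (the first two of dimensions $m$ and $\mu^P(\tau)$, respectively). Composing this homeomorphism with the inclusion of the open $\varepsilon$-ball of the $m$-dimensional negative subspace yields the required topological embedding $\varphi:B^m\to W^{1,2}([0,\tau];M,P,q)$, and by construction $\mathcal{E}_{P,q}(\varphi(x))\le\mathcal{E}_{P,q}(\gamma)$ with equality only at $x=0$. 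The inequality $L(\varphi(x))\le L(\gamma)$ then follows, after shrinking $\varepsilon$ so that $g_{\dot\varphi(x)}$ stays positive definite, from $L(\alpha)\le\sqrt{2\tau\mathcal{E}_{P,q}(\alpha)}$ and the constancy of $L\circ\dot\gamma$, exactly as in the final paragraph of the proof of Proposition~\ref{prop:Sa2.9}(ii).

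For (iii), the curves in $\varphi(B^m)$ from (ii) all satisfy $\varphi(x)(\tau)=q\in Q$ and hence lie in $W^{1,2}([0,\tau];M,P,q)\subset W^{1,2}([0,\tau];M,P,Q)$, on which $\mathcal{E}_{P,Q}$ restricts to $\mathcal{E}_{P,q}$, so the same $\varphi$ proves (iii); the extra hypotheses on $Q$ and on $\mathscr{J}^P_\gamma$ serve only to make $\gamma$ a critical point of $\mathcal{E}_{P,Q}$ in the first place. I expect the main obstacle to be verifying that the splitting lemma of \cite{Lu5} applies under the reduced regularity ($C^6$ Finsler metric on a $C^7$ base): this requires $\mathcal{E}_{P,q}$ to be genuinely $C^2$ on the Banach manifold $W^{1,2}([0,\tau];M,P,q)$ and its Hessian ${\bf I}^\gamma_{P,q}$ to be a compact perturbation of a Hilbertian form near $\gamma$, and one must propagate the chart-by-chart estimates of \S\ref{sec:2} to bound the relevant higher derivatives of $\mathcal{E}_{P,q}$ uniformly near $\gamma$.
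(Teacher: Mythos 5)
Your overall strategy matches the paper for parts (i) and (ii): invoke Corollary~\ref{cor:MorseIndex1} to compute the Morse index and nullity of ${\bf I}^\gamma_{P,q}$, write the Hessian as a positive-definite-plus-compact self-adjoint operator on $W^{1,2}_{P\times q}(\gamma^\ast TM)$, apply the splitting lemma of \cite{Lu5} to obtain the normal form
\[
{\mathcal E}_{P,q}\circ{\rm EXP}_\gamma\circ\psi(\xi)=\|P^+\xi\|_1^2-\|P^-\xi\|_1^2+{\mathcal E}_{P,q}\circ{\rm EXP}_\gamma(P^0\xi+\chi(P^0\xi)),
\]
and read off (i) from $H^-=H^0=\{0\}$, and (ii) from the $m$-dimensional negative subspace plus the inequality $L(\alpha)\le\sqrt{2\tau\,\mathcal{E}_{P,q}(\alpha)}$.

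For (iii) you take a genuinely different -- and shorter -- route. The paper reruns the entire splitting-lemma machinery on the larger manifold $W^{1,2}([0,\tau];M,P,Q)$ and uses (\ref{e:MS2}) (hence the hypothesis $\{X(\tau)\mid X\in\mathscr{J}^P_\gamma\}\supseteq T_{\dot\gamma(\tau)}Q$) to conclude $\dim{\bf H}^-({\bf I}^\gamma_{P,Q})\ge m$; this also implicitly gives the stronger embedding dimension $m+{\rm Index}(\mathcal{A}_\gamma)$. You instead observe that the embedding $\varphi$ already built in (ii) lands in $W^{1,2}([0,\tau];M,P,q)\subset W^{1,2}([0,\tau];M,P,Q)$ and that the energy functional restricted to the smaller space is literally the same, so the same $\varphi$ proves (iii). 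This is correct and cleaner; it buys you a shorter argument at the price of not recovering the sharper bound on the embedding dimension that the paper's route yields. One small mischaracterization: you say the hypothesis $\{X(\tau)\mid X\in\mathscr{J}^P_\gamma\}\supseteq T_{\dot\gamma(\tau)}Q$ ``serves only to make $\gamma$ a critical point of $\mathcal{E}_{P,Q}$'' -- that is the role of the perpendicularity hypothesis; the hypothesis on $\mathscr{J}^P_\gamma$ is what the paper's proof of (\ref{e:MS2}), and hence its version of (iii), actually uses, and in your streamlined route it is simply unused.

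A minor caution about your closing remark: you say applying \cite{Lu5} ``requires $\mathcal{E}_{P,q}$ to be genuinely $C^2$ on the Banach manifold $W^{1,2}([0,\tau];M,P,q)$'', but this is precisely what fails for a non-Riemannian Finsler metric, and is the very reason \cite{Lu5} exists. The paper instead records that $\mathcal{E}_{P,q}$ is $C^2$ on the dense Banach manifold $C^1([0,\tau];M,P,q)$ and that its Hessian there extends continuously to a symmetric bilinear form on $W^{1,2}_{P\times q}(\gamma^\ast TM)$, which is the regularity setting the splitting lemmas of \cite{Lu5} are built to handle. Your instinct to flag this point is right; the resolution is that the hypothesis one actually needs was formulated with exactly this situation in mind.
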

\begin{proof}
In the present case, we choose a $C^6$ Riemannian metric $h$ on $M$ such that
$P$ and $Q$ are totally geodesic near $\gamma(0)$ and $\gamma(\tau)$, respectively.
Then using it we define the norm in (\ref{e:1.1}).
Let
 $$
 {\bf B}_{2\rho}(T_{\gamma}\Lambda_{P\times q}(M)):= \{\xi\in
T_{\gamma}\Lambda_{P\times q}(M)\,|\,\|\xi\|_{1}<2\rho\}
$$
for $\rho>0$. If $\rho>0$ is small enough, the map
\begin{equation}\label{e:1.8}
{\rm EXP}_{\gamma}: {\bf
B}_{2\rho}(T_{\gamma}\Lambda_{P\times q}(M))\to\Lambda_{P\times q}(M)
\end{equation}
given by ${\rm EXP}_{\gamma}(\xi)(t)=\exp_{\gamma(t)}(\xi(t))$,
is only  a $C^{2}$ coordinate chart around $\gamma$ on $\Lambda_{P\times q}(M)$, where $\exp$ is the exponential map
defined by the Riemannian metric $h$. The restriction $\mathcal{E}^X_{P,q}$ of $\mathcal{E}_{P,q}$ to the Banach manifold
$$
C^1([0,\tau]; M, P, q):=\{\alpha\in C^1([0, \tau]; M)\,|\,\alpha(0)\in P, \alpha(\tau)=q\}
$$
with $q=\gamma(\tau)$
is $C^2$ near $\gamma$, and the Hessian $D^2\mathcal{E}^X_{P,q}(\gamma)$
(a continuous symmetric bilinear form on $T_\gamma C^1([0,\tau]; M, P, q)$)
 can be extended into  a continuous  symmetric bilinear form on $W^{1,2}_{P\times q}(\gamma^\ast TM)=
 T_{\gamma}\Lambda_{P\times q}(M)$
by a localization argument (cf. \cite{Lu4,Lu5}), which  is exactly equal to
 ${\bf I}^\gamma_{P,q}$ defined below  (\ref{e:secondDiff*}) by uniqueness.
 Our localization arguments (\cite[Sect.4]{Lu4}) imply that there exists a bounded linear self-adjoint Fredholm
 operator $B$ on the Hilbert space $W^{1,2}_{P\times q}(\gamma^\ast TM)$ such that
 ${\bf I}^\gamma_{P,q}(\xi,\eta)=\langle B\xi,\eta\rangle_1$ for all $\xi,\eta\in W^{1,2}_{P\times q}(\gamma^\ast TM)$.
(Actually, we showed that $B$ is the sum of a positive definite operator and a compact linear self-adjoint operator.
This can also be seen from (\ref{e:secondDifff4}). )
By \cite[Proposition~4.5]{Con} or \cite[Proposition~B.3]{Lu3}, $0$ is at most an isolated point of the spectrum $\sigma(B)$ of $B$.
Therefore we can choose $\epsilon>0$ such that $[-2\epsilon, 2\epsilon]\cap\sigma(B)=\{0\}$.
It follows from this and (the proof of) \cite[Theorem~7.1]{Hes51}
that there exists a unique direct sum  decomposition of the space
\begin{equation}\label{e:1.9}
T_{\gamma}\Lambda_{P\times q}(M)={\bf H}^-({\bf I}^\gamma_{P,q})\oplus {\bf H}^0({\bf I}^\gamma_{P,q}
)\oplus{\bf H}^+({\bf I}^\gamma_{P,q})
\end{equation}
having the following properties:
\begin{description}
\item[(a)] The spaces ${\bf H}^-({\bf I}^\gamma_{P,q})$, ${\bf H}^0({\bf I}^\gamma_{P,q})$ and ${\bf H}^+({\bf I}^\gamma_{P,q})$
are mutually orthonormal with respect to both ${\bf I}^\gamma_{P,q}$ and $\langle\cdot,\cdot\rangle_1$.
\item[(b)] ${\bf I}^\gamma_{P,q}$ (or $B$) is negative definite on
${\bf H}^-({\bf I}^\gamma_{P,q})$, zero on ${\bf H}^0({\bf I}^\gamma_{P,q})$, and positive definite on ${\bf H}^+({\bf I}^\gamma_{P,q})$.
\end{description}
Let  $P^+$, $P^-$ and $P^0$ denote projection operators onto
${\bf H}^+({\bf I}^\gamma_{P,q})$, ${\bf H}^-({\bf I}^\gamma_{P,q})$ and ${\bf H}^0({\bf I}^\gamma_{P,q})$,
respectively. Then they satisfy $P^0P^-=P^0P^+=P^-P^+=0$, $P^\ast B=BP^\ast,\;\ast=+,-,0$, and
 $P^++P^0+P^-$ is equal to the identity mapping on $T_{\gamma}\Lambda_{P\times q}(M)$.

By \cite[Theorem~1.1]{Lu5} we get $\delta\in (0, 2\rho]$
and a unique $C^{2}$-map
$\chi: {\bf B}_\delta\bigl({\bf H}^0({\bf I}^\gamma_{P,q})\bigr)\to {\bf H}^-({\bf I}^\gamma_{P,q}
 )\oplus{\bf H}^+({\bf I}^\gamma_{P,q})$
satisfying  $\chi(0)=0$ and $d\chi(0)=0$,  and  an origin-preserving
homeomorphism $\psi$ from ${\bf B}_\delta(T_{\gamma}\Lambda_{P\times q}(M))$
to an open neighborhood of $0$ in $T_{\gamma}\Lambda_{P\times q}(M)$
 such that
\begin{equation}\label{e:1.10}
{\cal E}_{P,q}\circ{\rm EXP}_{\gamma}\circ\psi(\xi)=\|P^+\xi\|_1^2-
\|P^-\xi\|_1^2 + {\cal E}_{P,q}\circ{\rm EXP}_{\gamma}(P^0\xi+ \chi(P^0\xi))
\end{equation}
for all $\xi\in{\bf B}_\delta(T_{\gamma}\Lambda_{P\times q}(M))$.

In case (i), Theorem~\ref{th:MorseIndex} and (\ref{e:P-JacobiField1}) imply, respectively,
${\bf H}^-({\bf I}^\gamma_{P,q})=\{0\}$ and ${\bf H}^0({\bf I}^\gamma_{P,q})=\{0\}$.
Then (\ref{e:1.10}) becomes
\begin{equation*}
{\cal E}_{P,q}\circ{\rm EXP}_{\gamma}\circ\psi(\xi)=\|\xi\|_1^2
+ {\cal E}(\gamma),\quad\forall \xi\in{\bf B}_\delta(T_{\gamma}\Lambda_{P\times q}(M)).
\end{equation*}
It follows that $\mathscr{U}(\gamma):={\rm EXP}_{\gamma}\circ\psi\left({\bf B}_\delta(T_{\gamma}\Lambda_{P\times q}(M))\right)$
is a desired neighborhood of $\gamma$.

In case (ii), by Theorem~\ref{th:MorseIndex} we have
$\dim{\bf H}^-({\bf I}^\gamma_{P,q})=m$.
Let $\xi_1,\cdots,\xi_m$ be a unit orthonormal basis in ${\bf H}^-({\bf I}^\gamma_{P,q})$. Define
$\varphi:B^m\to W^{1,2}([0,\tau]; M, P, q)$ by
$$
\varphi(x_1,\cdots,x_m)={\rm EXP}_{\gamma}\circ\psi\left(\delta\sum^m_{i=1}x_i\xi_i\right).
$$
This is a topological embedding satisfying $\varphi(0)=\gamma$. By (\ref{e:1.10}) we have
\begin{equation*}
{\cal E}_{P,q}\big(\varphi(x_1,\cdots,x_m)\big)=-\delta^2\sum^m_{i=1}x_i^2 + {\cal E}_{P,q}({\gamma}),\quad
\forall (x_1,\cdots,x_m)\in B^m.
\end{equation*}
Note that $L(\varphi(x))^2\le 2\tau\mathcal{E}_{P,q}(\varphi(x))\le 2\tau\mathcal{E}_{P,q}(\gamma)=L(\gamma)^2$.
The expected assertions follow from these immediately.

In order to get (iii),
by shrinking $\rho>0$ (if necessary) we have a $C^2$ coordinate chart around $\gamma$ on $\Lambda_{P\times Q}(M)$,
\begin{equation*}
{\rm EXP}_{\gamma}: {\bf
B}_{2\rho}(T_{\gamma}\Lambda_{P\times Q}(M))\to\Lambda_{P\times Q}(M)
\end{equation*}
given by ${\rm EXP}_{\gamma}(\xi)(t)=\exp_{\gamma(t)}(\xi(t))$.
Then by \cite[Theorem~1.1]{Lu5} we get $\delta\in (0, 2\rho]$
and a unique $C^{2}$-map
$\chi: {\bf B}_\delta\bigl({\bf H}^0({\bf I}^\gamma_{P,Q})\bigr)\to {\bf H}^-({\bf I}^\gamma_{P,Q}
 )\oplus{\bf H}^+({\bf I}^\gamma_{P,Q})$
satisfying  $\chi(0)=0$ and $\chi(0)=0$,  and  an origin-preserving
homeomorphism $\psi$ from ${\bf B}_\delta(T_{\gamma_0}\Lambda_{P\times Q}(M))$
to an open neighborhood of $0$ in $T_{\gamma}\Lambda_{P\times Q}(M)$
 such that
\begin{equation*}
{\cal E}_{P,Q}\circ{\rm EXP}_{\gamma}\circ\psi(\xi)=\|P^+\xi\|_1^2-
\|P^-\xi\|_1^2 + {\cal E}_{P,Q}\circ{\rm EXP}_{\gamma}(P^0\xi+ \chi(P^0\xi))
\end{equation*}
for all $\xi\in{\bf B}_\delta(T_{\gamma}\Lambda_{P\times Q}(M))$. Since $\dim{\bf H}^-({\bf I}^\gamma_{P,Q})\ge m$ by (\ref{e:MS2}),
repeating the proof of (ii) we may obtain the assertions.
(Clearly, $m$ may be replaced by $m+ {\rm Index}(\mathcal{A}_\gamma)$. )
\end{proof}

Finally, we discuss the exponential map of a $C^6$ conic Finsler manifold
$(M,L)$  with domain $A\subset TM\setminus 0_{TM}$.
For $p\in M$, let $\mathcal{D}_p$ be the set of vectors $v$ in $A\cap T_pM$
such that the unique maximal geodesic $\gamma_v:[0, b_v)\to M$ satisfying $\gamma_v(0)=p$ and $\dot\gamma_v(0)=v$, where $b_v>1$.
In the present case, by the proof of \cite[Prop.2.15]{Jav15} we see that
$\mathcal{D}_p$ is an open subset of $A\mathop\cap T_pM$ with property that
$v\in \mathcal{D}_p$ implies $tv\in \mathcal{D}_p$ for all $0<t\le 1$.
Define the \textsf{exponential map} of $(M,L)$ at $p$   by
 $$
  \exp^L_p:\mathcal D_p\to M,\quad v\mapsto\exp^L_p(v):=\gamma_v(1).
 $$
It is $C^3$.
When $A\cap T_pM=T_pM\setminus\{0\}$ the map $\exp^L_p$
can be extended to a $C^1$ map in an open neighborhood of $0_p$ by putting $\exp^L_p(0_p)=p$
(\cite[Prop.2.15]{Jav15}). Moreover, for $v\in\mathcal{D}_p$ and $w\in T_v(T_pM)$ it holds that
$D\exp^L_p(v)[w]=J(1)$, where $J$ is the unique Jacobi field along $\gamma_v$ such that $J(0)=0$ and $J'(0)=w$
(\cite[Prop.3.15]{Jav15}).

Let $P\subset M$ be a $C^7$ submanifold of dimension $k<n$ as before.
Since $g_{\lambda v}=g_v$ for any $v\in A$ and $\lambda>0$, by (\ref{e:normalBundle}) we see that
$TP^\bot$ has the cone property, i.e., $v\in TP^\bot$ implies $\lambda v\in TP^\bot$ for all $\lambda>0$.
Let $\mathcal{D}=\cup_{p\in M}\mathcal{D}_p$. It is an open subset in $A$ and
$\mathcal{D}\cap TP^\bot$ is also an $n$-dimensional submanifold
of $TM$.  For each $v\in TP^\bot$, since $tv\in \mathcal{D}\cap TP^\bot$ for small $t>0$,
we have $\pi(\mathcal{D}\cap TP^\bot)=\pi(TP^\bot)=P_0$. Therefore the map
$\pi:\mathcal{D}\cap TP^\perp\rightarrow P_0$ is a submersion.
Let
$$
(\mathcal{D}\cap TP^\bot)^\ast=\left\{v\in \mathcal{D}\cap TP^\bot\;\big|\; \hbox{$g_{v}|_{T_{\pi(v)}P\times T_{\pi(v)}P}$ is nondegenerate}\right\}.
$$
This is an open subset of $\mathcal{D}\cap TP^\bot$, and
  $v\in(\mathcal{D}\cap TP^\bot)^\ast$ implies
$tv\in (\mathcal{D}\cap TP^\bot)^\ast$ for all $0<t\le 1$.
Denote by  $\exp^{LN}$ the restriction of $\exp^L$ to $(\mathcal{D}\cap TP^\bot)^\ast$.

For $v\in (\mathcal{D}\cap TP^\bot)^\ast$
the unique maximal geodesic $\gamma_v$ is orthogonal to $P$ at $\gamma_v(0)=\pi(v)$.
Let $p=\pi(v)\in P_0$, $u\in T_v(\mathcal{D}\cap TP^\bot)^\ast$  and let
 $\alpha:(-\epsilon,\epsilon)\to(\mathcal{D}\cap TP^\bot)^\ast$ be a $C^4$ curve such that $\alpha(0)=v$ and $\alpha'(0)=u$.
Put $c=\pi\circ\alpha$. Then $\alpha(s)\in \mathcal{D}_{c(s)}\cap T_{c(s)}P^\bot$ for $s\in (-\epsilon,\epsilon)$.
 Define
$$
\Lambda(t,s)=\exp^{LN}(t\alpha(s))=\gamma_{\alpha(s)}(t),\quad\forall (t,s)\in [0, 1]\times(-\epsilon,\epsilon).
$$
Then $\Lambda(t,0)=\gamma_v(t)$,   $\Lambda(0,s)=c(s)\in P_0$ and $\partial_t\Lambda(0,s)=\alpha(s)$ for all $s$.

\begin{proposition}\label{prop:ExponJ}
$t\mapsto J(t)=\partial_s\Lambda(t,0)$ is a $P$-Jacobi field along $\gamma_v$,
and $D\exp^{LN}(v)[u]=J(1)$.
\end{proposition}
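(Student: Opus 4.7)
The statement has three components. The identity $D\exp^{LN}(v)[u]=J(1)$ is immediate from the definitions: $\exp^{LN}(\alpha(s))=\Lambda(1,s)$, so the chain rule gives $D\exp^{LN}(v)[u]=\partial_s\Lambda(1,0)=J(1)$.

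That $J$ satisfies the Jacobi equation (\ref{e:JacobiEq}) along $\gamma_v$ is the standard two-parameter variational fact, adapted to the Chern connection. Since each $t\mapsto\Lambda(t,s)$ is an $L$-geodesic, $\partial_t\Lambda$ is $A$-admissible and $D^{\partial_t\Lambda}_{\partial_t\Lambda}(\partial_t\Lambda)=0$. Differentiating this identity in $s$ at $s=0$, using $[\partial_t,\partial_s]=0$ and property~(ii) of $\nabla^V$, and converting $R^V$ to $R_v$ via (\ref{e:ChernCurv}) (the $P_v$-correction terms drop out by (\ref{e:P-tensor}) because the reference vector is the velocity itself), yields $D^{\dot\gamma_v}_{\dot\gamma_v}D^{\dot\gamma_v}_{\dot\gamma_v}J=R_{\dot\gamma_v}(\dot\gamma_v,J)\dot\gamma_v$. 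This is the two-parameter analogue of \cite[Proposition~3.15]{Jav15}, which I would simply cite rather than reprove.

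The substantive step is the $P$-Jacobi boundary condition (\ref{e:P-JacobiField}). Evaluating at $t=0$, $J(0)=\partial_s\Lambda(0,0)=c'(0)$, and $c(s)=\pi(\alpha(s))\in P_0\subset P$ gives $J(0)\in T_pP$. For the tangential condition on $D^{\dot\gamma_v}_{\dot\gamma_v}J(0)$, set $w=J(0)=c'(0)$ and choose (i) a $C^6$ $A$-admissible local extension $V$ of the map $c(s)\mapsto\alpha(s)$ to a neighborhood of $p$ in $M$ (possible since $A$ is open and $v\in A$), and (ii) a $C^6$ local vector field $W$ tangent to $P$ along $P$ with $W(c(s))=c'(s)$. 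Using $\partial_t\Lambda(0,s)=\alpha(s)$, I would compute
\[
\dot J^m(0)=\partial_t\partial_s\Lambda^m(0,0)=\partial_s\partial_t\Lambda^m(0,0)=\dot\alpha^m(0),
\]
so that the coordinate formula (\ref{e:covariant-derivative}) for $D^{\dot\gamma_v}_{\dot\gamma_v}J(0)$ agrees term by term with the coordinate formula (\ref{e:inducedConn}) for $(\nabla^V_WV)(p)$. Applying $\mathrm{tan}^P_v$ and invoking the definition of $\tilde S^P_v$ then concludes $\mathrm{tan}^P_v(D^{\dot\gamma_v}_{\dot\gamma_v}J(0))=\tilde S^P_v(w)=\tilde S^P_v(J(0))$, as required.

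The only delicate point is the existence of the extension $V$: the curve $s\mapsto\alpha(s)$ is $C^4$-valued in the open set $A\subset TM\setminus 0_{TM}$, and any $C^6$ extension to a neighborhood of $p$ in $M$ remains $A$-admissible after shrinking. Once $V$ is fixed, the independence of $(\nabla^V_WV)(p)$ from the choices of $V$ and $W$ (built into the definition of $\tilde S^P_v$ in the preliminaries) makes the identification with $D^{\dot\gamma_v}_{\dot\gamma_v}J(0)$ canonical, and the proof concludes.
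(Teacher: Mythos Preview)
Your proof is correct and follows the same route as the paper, which simply cites \cite[Prop.~3.13]{Jav15} for the Jacobi equation (you wrote 3.15, but the content you describe is that of 3.13) and defers to the argument above \cite[(4.4)]{Wu14} for the $P$-boundary condition; your coordinate computation makes the latter explicit.

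One small caveat worth flagging: your extension $V$ of $s\mapsto\alpha(s)$ along $c$ presupposes that $c$ is locally embedded near $s=0$, i.e., that $J(0)=c'(0)\ne 0$. Your closing paragraph addresses $A$-admissibility of $V$ but not this existence issue. When $J(0)=0$ the boundary condition reduces to $D^{\dot\gamma_v}_{\dot\gamma_v}J(0)\in(T_pP)^\perp_v$, which follows directly by differentiating the relation $g_{\alpha(s)}(\alpha(s),z)=\tfrac12\partial_{y^j}L(\alpha(s))z^j=0$ at $s=0$ for each fixed $z\in T_pP$; this is routine and does not affect the validity of your overall strategy.
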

\begin{proof}
By \cite[Prop.~3.13]{Jav15}, $J$  is a Jacobi field along $\gamma_v$.
$J(1)=\frac{d}{ds}\Lambda(1,s)|_{s=0}=D\exp^{LN}(v)[u]$.
By the proof above \cite[(4.4)]{Wu14} we have
${\rm tan}^P_{\dot\gamma_v(0)}\big(D_{\dot\gamma_v}^{\dot\gamma_v}J(0))=\tilde{S}^P_{\dot\gamma_v(0)}(J(0))$,
namely, $J$ is a $P$-Jacobi field.
\end{proof}

For $0<\tau\le 1$ let $\Lambda_\tau(t,s)=\exp^{LN}(t\tau\alpha(s))=\Lambda(\tau t,s)$. Denote by
$\tau\alpha$ the curve in $(\mathcal{D}\cap TP^\bot)^\ast$ given by $s\mapsto \tau\alpha(s)$.
Then  $\tau\alpha(0)=\tau v$, $(\tau\alpha)'(0)=\tau u$ and
$$
D\exp^{LN}(\tau v)[\tau u]=\frac{\partial}{\partial s}\Lambda_\tau(1,s)|_{s=0}=
\frac{\partial}{\partial s}\Lambda(\tau1,s)|_{s=0}=J(\tau).
$$

By the conclusion (i) of Theorem~\ref{th:MorseIndex}, if $s>0$ is small enough, $\gamma_v(s)$
is not a $P$-focal point along $\gamma_v|_{[0,s]}$, and therefore $D\exp^{LN}(\tau v)$
is an isomorphism because $\dim(\mathcal{D}\cap TP^\bot)^\ast=\dim M=n$.

Note that for $v\in (\mathcal{D}\cap TP^\bot)^\ast$ there exists $a_v>1$ such that $a_vv\notin
(\mathcal{D}\cap TP^\bot)^\ast$ and that $ta_vv\in(\mathcal{D}\cap TP^\bot)^\ast$ for all $0<t<1$.
By the above arguments there exists $0<d_v\le a_v$ such that $d_vv$ depends only on
the ray $[v]:=\{tv\,|\, t>0\}$ and that $D\exp^{LN}(tv)$
is an isomorphism for each $0<t<d_v$. Set
$(\mathcal{D}\cap TP^\bot)^{\ast\ast}:=\{tv\,|\, v\in (\mathcal{D}\cap TP^\bot)^\ast,\;0<t<d_v\}$.
We have
\begin{proposition}\label{prop:ExponJ1}
For each $v\in (\mathcal{D}\cap TP^\bot)^{\ast\ast}$, the restriction of $\exp^{LN}$
to a small neighborhood of $v$ in $\mathcal{D}\cap TP^\bot$ is
a local $C^3$ diffeomorphism.
\end{proposition}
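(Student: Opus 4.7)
The plan is to deduce the proposition directly from the $C^3$ inverse function theorem, since all the analytic ingredients have been assembled in the paragraphs preceding the statement; only the bookkeeping remains.

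First I would record the regularity statement. Since $\exp^L:\mathcal{D}\to M$ is $C^3$ and $(\mathcal{D}\cap TP^\bot)^\ast$ is an open subset of the $n$-dimensional $C^6$ submanifold $\mathcal{D}\cap TP^\bot$ of $TM$, its restriction $\exp^{LN}$ is a $C^3$ map between $n$-dimensional manifolds. In particular, for any $v\in (\mathcal{D}\cap TP^\bot)^{\ast\ast}\subset (\mathcal{D}\cap TP^\bot)^\ast$, both the source tangent space $T_v(\mathcal{D}\cap TP^\bot)$ and the target $T_{\exp^{LN}(v)}M$ have dimension $n$.

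Next I would promote the fact that $D\exp^{LN}(v)$ is a linear isomorphism, which is exactly the point of the last paragraph before the proposition. Write $v = t_0 w$ with $w\in(\mathcal{D}\cap TP^\bot)^\ast$ and $0<t_0<d_w$. By Proposition~\ref{prop:ExponJ}, any $u\in\ker D\exp^{LN}(v)$ gives rise to a $P$-Jacobi field $J$ along $\gamma_v$ with $J(1)=0$; equivalently, rescaling, a $P$-Jacobi field along $\gamma_w$ vanishing at $t=t_0$. Because $0<t_0<d_w$, the conclusion (i) of Theorem~\ref{th:MorseIndex} together with the choice of $d_w$ ensures that $\gamma_w(t_0)=\pi(v)'$-image-point $\exp^{LN}(v)$ is not a $P$-focal point along $\gamma_w|_{[0,t_0]}$, so $J\equiv 0$ and hence $u=0$. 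Injectivity in equal dimension $n$ gives bijectivity of $D\exp^{LN}(v)$.

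Finally, I would apply the $C^3$ inverse function theorem (in arbitrary charts on $\mathcal{D}\cap TP^\bot$ around $v$ and on $M$ around $\exp^{LN}(v)$) to obtain an open neighborhood $\mathcal{O}\subset(\mathcal{D}\cap TP^\bot)^\ast$ of $v$ on which $\exp^{LN}|_\mathcal{O}$ is a $C^3$ diffeomorphism onto its image. Since $(\mathcal{D}\cap TP^\bot)^\ast$ is open in $\mathcal{D}\cap TP^\bot$, $\mathcal{O}$ is also a neighborhood of $v$ in $\mathcal{D}\cap TP^\bot$, which is what is required.

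There is no genuine obstacle — the only small care needed is verifying that the set $(\mathcal{D}\cap TP^\bot)^{\ast\ast}$, which depends only on rays, is a legitimate open subset of $(\mathcal{D}\cap TP^\bot)^\ast$ on which $D\exp^{LN}$ is invertible; this has already been established just above the proposition, so the role of the present argument is simply to combine that fact with smoothness and dimension to invoke the inverse function theorem.
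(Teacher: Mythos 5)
Your proposal is correct and follows essentially the same route as the paper: the paper states Proposition~\ref{prop:ExponJ1} without a displayed proof because the paragraph immediately preceding it already establishes the two hypotheses of the $C^3$ inverse function theorem, namely that $\exp^{LN}$ is a $C^3$ map between $n$-dimensional manifolds and that $D\exp^{LN}(v)$ is a linear isomorphism for every $v\in(\mathcal{D}\cap TP^\bot)^{**}$ (by Proposition~\ref{prop:ExponJ} and the absence of $P$-focal points before the instant $d_w$). What you do is simply unpack that argument and invoke the inverse function theorem explicitly, which matches the paper's intent exactly.
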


Specially, if $P$ is a point $p$, by the conclusion (ii) of Theorem~\ref{th:MorseIndex} we may deduce

\begin{proposition}\label{prop:exponential}
For each $v\in\mathcal{D}_p$, there exists $d_v>0$ such that
$td_vv\in\mathcal{D}_p$ for all $t\in (0, 1]$,
$d_vv$ depends only on the ray $[v]:=\{tv\,|\, t>0\}$ and that the restriction of $\exp^{L}$
to a small neighborhood of $tv$ in $\mathcal{D}_p$ is $C^3$ diffeomorphism.
\end{proposition}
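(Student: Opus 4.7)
The plan is to reduce Proposition~\ref{prop:exponential} to Theorem~\ref{th:MorseIndex}(i) applied with $P=\{p\}$, in direct parallel to how Proposition~\ref{prop:ExponJ1} was derived. Observe first that when $P$ is the single point $\{p\}$ one has $T_pP=\{0\}$, so a $\{p\}$-Jacobi field along $\gamma_v$ is simply a Jacobi field vanishing at $t=0$, and a $\{p\}$-focal point along $\gamma_v$ is exactly a point conjugate to $p$. Moreover, since $L=F^2$ for a conic Finsler metric, $g_w$ is positive definite on $A$, and the condition of $g_{\dot\gamma(0)}$-perpendicularity to $P$ is vacuous. Hence every hypothesis of Theorem~\ref{th:MorseIndex} is satisfied by $\gamma_v|_{[0,\tau]}$ for each compact sub-interval $[0,\tau]\subset[0,b_v)$.

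Next I will construct $d_v$. By Theorem~\ref{th:MorseIndex}(i) the set of $t\in(0,\tau]$ at which $\gamma_v(t)$ is conjugate to $p$ along $\gamma_v|_{[0,t]}$ is finite for every $\tau<b_v$, so the quantity
$$
c(v):=\inf\{t\in(0,b_v)\,|\,\gamma_v(t)\text{ is conjugate to }p\text{ along }\gamma_v|_{[0,t]}\}\in(0,+\infty]
$$
(with the convention $c(v)=+\infty$ when no such $t$ exists) is strictly positive. The homogeneity $\gamma_{\lambda v}(t)=\gamma_v(\lambda t)$ yields $c(\lambda v)=c(v)/\lambda$ and $b_{\lambda v}=b_v/\lambda$ for $\lambda>0$. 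Setting
$$
d_v:=\tfrac{1}{2}\min\{c(v),\,b_v\},
$$
one gets $0<d_v<b_v$, so $td_vv\in\mathcal{D}_p$ for every $t\in(0,1]$; and the scaling relations give $d_{\lambda v}(\lambda v)=d_vv$, so the vector $d_vv$ depends only on the ray $[v]$.

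Finally I would verify the local diffeomorphism property. Fix $t\in(0,1]$ and set $w:=td_vv\in\mathcal{D}_p$. Since $\exp^L_p:\mathcal{D}_p\to M$ is $C^3$, the inverse function theorem reduces the claim to showing that $D\exp^L_p(w):T_w(T_pM)\to T_{\exp^L_p(w)}M$ is injective, as both spaces have dimension $n$. Using the identity $D\exp^L_p(w)[u]=J_u(1)$ with $J_u$ the unique Jacobi field along $\gamma_w$ satisfying $J_u(0)=0$ and $J_u'(0)=u$ (recalled in the paragraph preceding Proposition~\ref{prop:ExponJ}), and the affine reparametrization $\gamma_w(s)=\gamma_v(std_v)$ which identifies $J_u$ with a Jacobi field along $\gamma_v|_{[0,td_v]}$ vanishing at both endpoints, any $u\in\ker D\exp^L_p(w)$ would make $\gamma_v(td_v)$ conjugate to $p$ along $\gamma_v|_{[0,td_v]}$. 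But $td_v\le d_v<c(v)$ forbids this, so $u=0$ and $D\exp^L_p(w)$ is an isomorphism. The only non-routine item is the bookkeeping of how $c(v)$ and $b_v$ transform under rescaling $v\mapsto\lambda v$, which I expect to be the mildest obstacle since it is a direct consequence of the homogeneity of the geodesic flow.
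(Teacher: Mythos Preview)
Your argument is correct and is precisely the specialization to $P=\{p\}$ of the reasoning leading to Proposition~\ref{prop:ExponJ1} that the paper intends; the paper happens to cite conclusion~(ii) of Theorem~\ref{th:MorseIndex} rather than~(i), but either suffices for the positivity of the first conjugate time. The only cosmetic slip is that your formula $d_v=\tfrac12\min\{c(v),b_v\}$ can equal $+\infty$ when the geodesic is complete and has no conjugate points, so one should impose a finite cap (e.g.\ $d_v=\tfrac12\min\{c(v),b_v,2\}$) to keep $d_v$ a positive real number.
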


If $L=F^2$ for some conic Finsler metric $F$ on $M$ there is a better result, see
\cite[Proposition~3.21]{Jav14}.

Using the theory developed in this paper
it is possible  to give generalizations of related results in Riemannian-Finsler geometry.
They will be studied elsewhere.\\

\noindent{\bf Examples}.
 There exist many examples of (conic) pseudo-Finsler metrics and conic Finsler metrics (cf. \cite{CaJaSa14} and \cite{Jav14, Jav20S}).
A popular conic Finsler metric on $M$ is the Kropina metric $K$ given by
$K(v)=-\frac{h(v,v)}{2\omega(v)}$, where $h$ is a Riemannian metric on $M$ and $\omega$ is a nonvanishing $1$-form on $M$.
$K$ is defined on $TM\setminus{\rm Ker}(\omega)$, and usually it is restricted to
$A=\{v\in TM\,|\,\omega(v)<0\}$,  on which we have $K(v) > 0$. The pair $(M, K)$ is often called a  \textsf{Kropina space},
and is related to the Zermelo's navigation problem which consists in finding the paths
  between two points $p$ and $q$ that minimize the travel time of a ship or an airship moving in a wind in a Riemannian landscape
  $(M, h)$.   Recently, Cheng et al. gave a variational characterization of chains in CR geometry as geodesics of a certain Kropina metric
 (\cite{ChMaMa}). Caponio et al. studied the existence of connecting and closed geodesics in a manifold endowed with a Kropina metric
   (\cite{CaGiMaSu}).

 \begin{theorem}[\hbox{\cite[Theorem~4.4]{CaGiMaSu}}]\label{th:4.1}
Let $(M, K)$ be above and let $p$, $q$ be two different points in $M$ such that
$\Omega_{p,q}(A)$  the set of the piecewise smooth, $A$-admissible curves between $p$ and $q$ is nonempty.
Assume that $(M, h)$ is complete and there exists a point $\bar{x}$ and a positive constant $C_{\bar{x}}$ such that
$\|\omega\|_x\le C_{\bar{x}}(d_h(x,\bar{x})+1)$. Then for each connected component $\mathscr{C}$ of $W^{1,2}([0,1]; M,p,q)$
there exists a geodesic of the Kropina space from $p$ to $q$, which is a minimizer of the Kropina length functional on $\mathscr{C}$,
provided that there exists an $A$-admissible curve in $\mathscr{C}$.
 Moreover, if $\mathscr{C}$ corresponds to a non-trivial element of the fundamental group of $M$, a
geodesic loop in $\mathscr{C}$ exists when $p$ and $q$ coincide.
\end{theorem}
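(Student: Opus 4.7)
The plan is to apply the direct method of the calculus of variations to the Kropina energy functional $\mathcal{E}_K(\gamma)=\frac12\int_0^1 K(\dot\gamma)^2\,dt$ over $A$-admissible curves in the fixed connected component $\mathscr{C}$. Because $K$ is positively $1$-homogeneous, the Cauchy--Schwarz inequality shows that minimizing $\mathcal{E}_K$ on $\mathscr{C}$ is equivalent to minimizing the Kropina length $\ell_K$, and the minimizers are automatically parametrized with constant $K$-speed. Any $A$-admissible critical point of $\mathcal{E}_K$ is an $L$-geodesic with $L=K^2$ in the sense of Section~1.2, so producing a minimizer yields the desired connecting geodesic; the closed-geodesic assertion will follow by specializing to $p=q$ and to a non-trivial component of the free loop space.

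Pick a minimizing sequence $(\gamma_n)\subset\mathscr{C}$ of $A$-admissible curves. The identity $K(v)^2=h(v,v)^2/(4\omega(v)^2)$, combined with the linear growth hypothesis $\|\omega\|_x\le C_{\bar x}(d_h(x,\bar x)+1)$ and the $h$-completeness of $M$, will give a Gronwall-type estimate showing that the $\gamma_n$ stay in a fixed $h$-ball and have uniformly bounded $h$-energy $\int_0^1 h(\dot\gamma_n,\dot\gamma_n)\,dt$. Hence $(\gamma_n)$ is bounded in $W^{1,2}([0,1];M)$ (computed with respect to a complete $h$-induced Hilbert structure as in (\ref{e:1.1})), and a subsequence converges weakly in $W^{1,2}$ and uniformly to some $\gamma_\infty\in W^{1,2}([0,1];M,p,q)$. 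Uniform convergence preserves the homotopy class, so $\gamma_\infty\in\mathscr{C}$. The convexity of $v\mapsto h(v,v)^2/(4\omega(v)^2)$ on $A\cap T_xM$, which is simply the positive definiteness of the fundamental tensor of $K$, yields the weak lower semicontinuity $\mathcal{E}_K(\gamma_\infty)\le\liminf \mathcal{E}_K(\gamma_n)$, provided the left-hand side makes sense, i.e.\ provided $\gamma_\infty$ is $A$-admissible.

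The main obstacle is precisely this last step: since $A=\{\omega<0\}$ is open, weak $W^{1,2}$-limits of $A$-admissible curves need not be $A$-admissible. The rescue comes from the singular behaviour of $K$ on $\partial A$: the integrand $K(\dot\gamma)^2=h(\dot\gamma,\dot\gamma)^2/(4\omega(\dot\gamma)^2)$ blows up as $\omega(\dot\gamma)\to 0^-$ unless $h(\dot\gamma,\dot\gamma)$ vanishes to the same order, so a uniform bound on $\mathcal{E}_K(\gamma_n)$ prevents concentration of $\omega(\dot\gamma_n)$ near zero on any set of positive measure. Fatou's lemma applied to $1/\omega(\dot\gamma_n)^2$, together with a.e.\ convergence of $\dot\gamma_n$ along a further subsequence, then forces $\omega(\dot\gamma_\infty)<0$ almost everywhere, giving $A$-admissibility of $\gamma_\infty$; elliptic regularity applied to the Euler--Lagrange equation upgrades $\gamma_\infty$ to a $C^2$ (in fact $C^4$, cf.\ the remark above (\ref{e:1.1})) $L$-geodesic and finishes the first part. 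For the loop assertion, run the same minimization on a path component of $W^{1,2}([0,1];M,p,p)$ corresponding to a non-trivial element of $\pi_1(M)$; the minimizer cannot collapse to the constant loop since that would change the free homotopy class.
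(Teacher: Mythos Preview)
The paper does not prove this theorem at all: Theorem~\ref{th:4.1} is simply quoted verbatim from \cite[Theorem~4.4]{CaGiMaSu} in the ``Examples'' paragraph at the end of Section~\ref{sec:3}, and is then \emph{used} (not proved) to produce a minimizing Kropina geodesic $\gamma$ to which the author applies his own Morse index theorem and Proposition~\ref{prop:Sa2.9}. So there is no ``paper's own proof'' to compare your proposal against; any proof belongs to the cited reference.

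That said, your sketch does follow the general philosophy of \cite{CaGiMaSu} (direct method for $\mathcal{E}_K$, coercivity from the linear growth of $\|\omega\|$, lower semicontinuity from convexity of the integrand). The weak point is the passage from weak $W^{1,2}$-convergence to $A$-admissibility of the limit. Weak $W^{1,2}$-convergence does \emph{not} give a.e.\ convergence of $\dot\gamma_n$, so the Fatou argument you propose for $1/\omega(\dot\gamma_n)^2$ cannot be run as written; in \cite{CaGiMaSu} this step requires a more delicate analysis (reparametrization to constant $K$-speed, control of $\omega(\dot\gamma_n)$ via the energy bound, and a separate argument ruling out degeneration of the limit to a non-admissible curve). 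If you want a self-contained proof you should consult that paper directly rather than the present one.
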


Let $\gamma:[0, 1]\to M$ be the geodesic of the Kropina space $(M, K)$ from $p$ to $q$ given by Theorem~\ref{th:4.1}.
By Theorem~\ref{th:MorseIndex} and (\ref{e:MS5}), $p$ has only finitely many conjugate points along $\gamma$ and
\begin{equation*}
{\rm Index}(\tilde{\bf I}^\gamma_{p,q})={\rm Index}({\bf I}^\gamma_{p,q})=\sum\limits_{t_{0}\in(0,\tau)}\mu^{p}(t_{0}).
\end{equation*}
We conclude that there is no $s\in (0, 1)$ such that $\gamma(s)$ is a conjugate point of $p$ along $\gamma|_{[0,s]}$ and therefore
${\rm Index}(\tilde{\bf I}^\gamma_{p,q})={\rm Index}({\bf I}^\gamma_{p,q})=0$.
Otherwise, Proposition~\ref{prop:Sa2.9} leads to a contradiction because $\gamma$ is a minimizer of the Kropina length functional on $\mathscr{C}$.

Similarly, the closed geodesic obtained by \cite[Theorem~5.1]{CaGiMaSu} contains no conjugate points and has index zero.

\section*{Acknowledgments}
 The author is deeply grateful to
 the anonymous referee for some interesting questions,  numerous comments
 and improved suggestions.

%The authors warmly thank the anonymous referee, whose accurate comments helped improve the manuscript.

\renewcommand{\refname}{REFERENCES}

\medskip

\begin{tabular}{l}
 School of Mathematical Sciences, Beijing Normal University\\
 Laboratory of Mathematics and Complex Systems, Ministry of Education\\
 Beijing 100875, The People's Republic of China\\
 E-mail address: gclu@bnu.edu.cn\\
\end{tabular}

\end{document}